\newtheorem {theorem}{Theorem} [section]
\newtheorem {lemma} [theorem] {Lemma}
\newtheorem {proposition} [theorem] {Proposition}
\newtheorem {example} [theorem] {Example}
\newtheorem {definition} [theorem] {Definition}
\newtheorem {remark} [theorem] {Remark}
\newtheorem {convention} [theorem] {Convention}
\newtheorem{Claim}{Claim}
\begin{document}

\title[Equation Problem over central extensions of hyperbolic groups]
{\bf Equation Problem over central extensions of hyperbolic groups}

\author[Hao Liang]{Hao Liang}

\address{Hao Liang\\
MSCS UIC 322 SEO, \textsc{M/C} 249\\
851 S. Morgan St.\\
Chicago, IL 60607-7045, USA} \email{hliang8@uic.edu}

\begin{abstract}
The Equation Problem in finitely presented groups asks if there
exists an algorithm which determines in finite amount of time
whether any given equation system has a solution or not. We show
that the Equation Problem in central extensions of hyperbolic groups
is solvable.
\end{abstract}

\maketitle

\vspace{4mm}

\section{\large Introduction}

Let $G$  be a finitely presented group and let $C$ be a generating
set for $G$. Denote a set of variables by $U$.  An {\em equation
system} is a finite collection of equations $w_{i}=1$ where
$w_{i}\in (U\cup C)^{*}$. Let $\mathcal{E}=\{w_{i}=1\mid i=1\cdots
n\}$ be an equation system in $G$. A {\em solution} of $\mathcal{E}$
in $G$ is a map $f: U\rightarrow G$ such that the induced monoid
homomorphism (sending each $c\in C$ to itself) $\bar f:(U\cup
C)^{*}\rightarrow G$ maps $w_{i}$ to $1$ for $1\leq i\leq n$.

Let $\mathcal{C}$ be a class of finitely presented groups. The {\em
Equation Problem} in $\mathcal{C}$ asks the following question: Is
there an algorithm which takes as input a presentation of  a group
$G\in\mathcal{C}$ and a finite system of equations with constants in
$G$ and which decides whether there exists a solution or not? When
the answer is positive, we say that the Equation Problem in
$\mathcal{C}$ is solvable.

The Equation problem is a vast generalization of the word problem,
the conjugacy problem and the simultaneous conjugacy problem. A
direct consequence is the existence of groups with unsolvable
equation problem. In fact  there exists a free 3-step nilpotent
group of rank 2 with unsolvable equations problem (\cite{Tru}, see
also \cite{Rom}). Therefore the Equation Problem is strictly harder
than the word problem and the conjugacy problem since these two
problems are solvable in finitely generated nilpotent groups.

The most famous breakthrough in solving equations over groups is the
solution of the Equation Problem in free groups by Makanin
\cite{Mak}. In \cite{DGu} Dahmani and Guirardel and independently in
\cite{LS} Lohrey and Senizergues give an algorithm for solving
equations and inequations with rational constraints in virtually
free groups. One of the most successful methods of solving the
Equation Problem for groups is to reduce it to the Equation Problem
over a (virtually) free group. Rips and Sela \cite{RS} reduce the
Equation Problem in torsion free hyperbolic groups to the Equation
Problem in free groups. Dahmani and Guirardel \cite{DGu} reduce the
problem in hyperbolic groups (possibly with torsion) to solving
equations with rational constraints (See Def \ref{rational
constraints}) over virtually free groups. Diekert and Muscholl
\cite{DM} reduce the Equation Problem in right-angled Artin groups
to free groups.

Besides being a natural algorithmic problem in groups, the equation
problem has the following interpretation in terms of homomorphisms
between groups: Let $K$, $G$ be finitely presented groups and
$\langle x_{1},\cdots, x_{n} \mid r_{1},\cdots, r_{i}\rangle$ be a
presentation of $K$. Suppose one's favorite elements in $K$ are
$w_{1},\cdots,w_{l}$. Here $w_{i}$ are words in $x_{i}$. Let
$c_{1},\cdots,c_{l}$ be elements of $G$. Here is a natural question
one might want to ask: Is there a homomorphism from $K$ to $G$
sending $w_{i}$ to $c_{i}$? The question is equivalent to asking
whether the equation system $E = \{r_{i}=1 ,w_{j}=c_{j} \mid 1 \leq
i\leq m; 1\leq j\leq l\}$ has a solution in $G$. Under this
interpretation $x_{i}$ become variables and note that $r_{i}$ are
words in $x_{i}$. Hence a positive solution to the Equation Problem
for $G$ gives a way to determine algorithmically whether particular
kind of homomorphisms to $G$ exist.

In this paper, we consider the Equation Problem in central
extensions of hyperbolic groups. We prove the following theorem.

\begin{theorem}\label{Main}
The Equation Problem is solvable in central extensions of hyperbolic groups.
\end{theorem}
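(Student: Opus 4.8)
The plan is to reduce the Equation Problem in a central extension $1 \to Z \to G \to H \to 1$ (with $H$ hyperbolic) to the known solvability of the Equation Problem — with rational constraints — in the hyperbolic quotient $H$, together with a linear-algebra problem over $\Z$ coming from the central kernel. First I would fix a central extension; by standard facts about central extensions of finitely presented groups, $Z$ is finitely generated abelian and $G$ is finitely presented, with the extension classified by a 2-cocycle $\kappa \colon H \times H \to Z$. Given a system $\mathcal{E} = \{w_i(x_1,\dots,x_m) = 1\}$ over $G$, I would push each equation forward to $H$: a solution in $G$ lies over a solution $\bar f$ of the image system $\bar{\mathcal{E}}$ in $H$. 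So the first key step is to understand, for a fixed solution in $H$, the set of all lifts to $G$ — this is where the cocycle bookkeeping happens.

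The second, and I expect hardest, step is that there are infinitely many solutions of $\bar{\mathcal{E}}$ in $H$ in general, so one cannot simply enumerate them and check each; one needs a finitary description. Here I would invoke the machinery behind the solution of equations in hyperbolic groups — following Rips--Sela \cite{RS} and especially Dahmani--Guirardel \cite{DGu} — which produces not merely a yes/no answer but a structured (finite, e.g. via Makanin--Razborov-type) parametrization of the solution set, or at least enough control (say via a finite-index/rational description of solution tuples up to the action that fixes the value of $w_i$ in $H$) to pin down how the $Z$-valued ``defect'' of a candidate lift varies. Concretely, for a variable assignment $x_j \mapsto g_j \in G$ lying over a fixed $\bar f(x_j) = h_j \in H$, writing $g_j = s(h_j) z_j$ for a set-theoretic section $s$ and $z_j \in Z$, the value $\bar f(w_i)$ in $G$ equals $s(\text{(value of } w_i \text{ in } H)) \cdot \big(\sum_j n_{ij} z_j + \beta_i(\bar f)\big)$, where the integers $n_{ij}$ are read off from the exponent sums of $x_j$ in $w_i$ and $\beta_i(\bar f) \in Z$ is a cocycle correction depending only on the $H$-solution. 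Thus, \emph{once $\bar f$ is fixed}, $\mathcal{E}$ has a lift iff the system of linear equations $\sum_j n_{ij} z_j = -\beta_i(\bar f)$ is solvable over $Z$ — decidable by Smith normal form.

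The third step is to make this uniform over the infinitely many $H$-solutions: I would argue that $\beta_i$ takes only finitely many values, or more precisely that the relevant data — the coset of $\mathrm{Im}(n_{ij})$ that $\beta_i(\bar f)$ must hit — ranges over a computable finite set as $\bar f$ ranges over the (Makanin--Razborov-parametrized) solutions of $\bar{\mathcal{E}}$ in $H$. This requires encoding the condition ``$\beta_i(\bar f)$ lies in the prescribed subgroup of $Z$'' as a rational constraint (Definition \ref{rational constraints}) on the $H$-solution, since $\beta_i$ is a word map into a central (hence in particular a virtually abelian, tractable) piece; then solvability of equations with rational constraints in hyperbolic groups \cite{DGu} finishes the job. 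The main obstacle is precisely this third step: showing that the $Z$-valued obstruction is controlled by a rational constraint on $H$-solutions and that this constraint is algorithmically constructible from the presentation of $G$ and the cocycle — one must handle the interaction between the (generally non-abelian, geometric) structure of $H$-solutions and the additive structure of the central kernel, and verify that passing through a quasi-isometric/virtually-free model as in \cite{DGu} does not destroy the linearity in the $z_j$.
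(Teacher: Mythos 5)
Your overall strategy coincides with the paper's: push the system to the hyperbolic quotient, observe that centrality makes the fibre condition a linear system over the kernel whose right-hand side is a cocycle correction depending on the quotient solution, and then try to make that correction uniform over the (possibly infinite) set of quotient solutions by means of rational constraints, finishing with \cite{DGu} plus Smith normal form. The first two steps are fine, including the exponent-sum linearization (this is exactly the role of the systems $\mathcal{W}^{k}(\bar c)$, resp.\ $\mathcal{W}_{t}$, in the paper). However, the step you yourself flag as the ``main obstacle'' is precisely the paper's main content, and your proposal gives no argument for it: you assert that the $Z$-valued obstruction $\beta_i(\bar f)$ ``should'' be controlled by a rational constraint on the quotient solutions, but for an arbitrary set-theoretic section there is no reason the level sets of the cocycle are rational, and nothing in \cite{RS} or the Makanin--Razborov picture supplies this. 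The missing ingredient is Neumann--Reeves' theorem (Theorem \ref{regularity of rho}, from \cite{NR2}) that a central extension of a hyperbolic group admits a section whose cocycle is $L$-regular for a biautomatic structure $L$ of quasi-geodesic words; from this the paper builds a symmetric section $q$ landing in a doubled extension $E'$, and constructs the Future Predicting and Parity Predicting automata, which partition $L$ into finitely many regular pieces on which the relevant cocycle values $\sigma_q(\,\cdot\,,\,\cdot\,)$ and the parity of $\sigma_\rho(w,w^{-1})$ are constant and computable (Proposition \ref{key property of FPA}, Lemma \ref{key property of PPA}); the parity bookkeeping is needed to detect which lifts land in $E$ rather than merely in $E'$. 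None of this is present, even in outline, in your sketch.

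A secondary but real issue is where the rational constraints live. The constraints produced by this construction are $L$-rational subsets of $\Gamma$, and the decidability result invoked by the paper (Theorem \ref{Virtually free}) concerns equations with rational constraints in \emph{virtually free} groups, not arbitrary rational constraints in hyperbolic groups; the result of \cite{DGu} for hyperbolic groups requires quasi-isometrically embeddable constraints. The paper therefore routes everything through the canonical-representative group $V$: the quotient system becomes finitely many tripod systems over $V$, and the regular languages defining the constraints are pulled back along the monoid morphism $\phi\colon Y^{*}\to X^{*}$ (Lemma \ref{quasi-geodesic constants}) so that solutions of $\mathcal{V}_t$ automatically carry the cocycle data needed to write down a single, computable right-hand side for $\mathcal{W}_t$ (Theorem \ref{E V W}). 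Your proposal would need an analogous mechanism; as written, ``encode it as a rational constraint on the $H$-solution and apply \cite{DGu}'' does not yet constitute a proof.
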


A source of interesting examples of central extensions of hyperbolic
groups is 3-manifold theory. The fundamental groups of a large and
interesting class of Seifert fibered spaces are central extensions
of hyperbolic groups. All these Seifert fibered spaces are
aspherical and the fundamental group is a complete topological
invariant. Hence information about maps between fundamental groups
of these spaces gives essentially all information up to homotopy
about maps between these spaces. As pointed out above the Equation
Problem is an important tool for studying maps between groups. We
believe there will be applications of Theorem \ref{Main} that lead
to new understanding of maps between Seifert fibered spaces.

The organization of the paper is as follow. In Section \ref{P} we recall important definitions and key results from
\cite{DGu} and \cite{NR2}, which are the main tools we use in this paper, and describe the idea of the proof of Theorem \ref{Main}. In Section \ref{automata}, we
construct the finite state automata that we use in the proof of Theorem \ref{Main}. In Section \ref{proof}, we prove Theorem
\ref{Main}.

The author would like to thank his thesis advisor, Daniel Groves,
for his excellent guidance and for suggesting the problem. He would
also like to thank Benson Farb for helpful suggestions and his
interest in this work.

\section{\large Preliminaries}\label{P}

In the section we recall important definitions and key results from
\cite{DGu} and \cite{NR2}.

\subsection{Equation Problem in hyperbolic groups}
Let $\delta\textgreater 0$. A geodesic triangle in a metric space is
said to be {\em $\delta$-slim} if each of its sides is contained in
the $\delta$-neighbourhood of the union of the other two sides. A
geodesic space X is said to be {\em hyperbolic} there exists some
$\delta\textgreater 0$ if every triangle in X is $\delta$-slim. A
finitely generated group is {\em hyperbolic} if its Cayley graph is
hyperbolic.

Let $\Gamma$ be a hyperbolic group,  $X$ be a finite symmetric
generating set of $\Gamma$, $K_{\Gamma}$ be the Cayley graph of
$\Gamma$ with respect to $X$ and $\delta$ be a hyperbolic constant
for $K_{\Gamma}$. Consider the Rips complex of $K_{\Gamma}$, denoted
by $R_{50\delta}(K_{\Gamma})$, whose set of vertices is $\Gamma$ and
whose simplices are subsets of $\Gamma$ of diameter at most
$50\delta$ in $K_{\Gamma}$. Denote the 1-skeleton of the barycentric
subdivision of $R_{50\delta}(K_{\Gamma})$ by $\mathcal K$.

The action of $\Gamma$ on $K_{\Gamma}$ extends to an action on
$\mathcal K$. The quotient $\mathcal{K}/\Gamma$ is a finite graph
and can be given the structure of a finite graph of finite groups
(vertices and edges are decorated by stabilizers of their preimages
in $\mathcal{K}$). Hence $\pi_{1}(\mathcal{K}/\Gamma)$ is virtually
free. A finite presentation of $\pi_{1}(\mathcal{K}/\Gamma)$ can be
computed.

As in \cite{DGu}, let $V$ be the set of paths $v$ in $\mathcal K$
which start at the identity and end at a vertex of $K_{\Gamma}$ up
to homotopy relative to endpoints.  Let $\pi$ denote the natural
homomorphism from $V$ to $\Gamma$ sending each path to its endpoint.
We give $V$ a group structure by defining $vv'$ to be the homotopy
class of the concatenation $v\cdot(\pi(v)v')$ (where $\pi(v)v'$ is
the translate of $v'$ by $\pi(v)\in \Gamma$).

Dahmani and Guirardel prove the following \cite[Lemma 9.9]{DGu}

\begin{lemma}
The group $V$ is virtually free. More precisely, it is isomorphic to
the fundamental group of the finite graph of finite groups
$\mathcal{K}/\Gamma$. In particular, a presentation of $V$ is
computable from a presentation of $\Gamma$.
\end{lemma}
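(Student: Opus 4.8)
The substance of the lemma is the isomorphism $V\cong\pi_1(\mathcal K/\Gamma)$, where $\pi_1$ denotes the fundamental group of the indicated graph of groups; once this is established, the assertions that $V$ is virtually free and that a presentation of $V$ is computable follow immediately from the facts recorded above (that $\pi_1(\mathcal K/\Gamma)$ is virtually free and that a finite presentation of it is computable). The plan is to prove the isomorphism by realizing $V$ as the group of \emph{lifts} of the $\Gamma$--action on $\mathcal K$ to the universal cover of $\mathcal K$, and then to invoke Bass--Serre theory. I would begin with two observations: $\mathcal K$ is a connected graph (its $1$--skeleton contains the connected graph $K_\Gamma$), and $\Gamma$ acts on $\mathcal K$ with finitely many orbits of cells, with finite cell stabilizers (it already acts freely and transitively on the vertex set $\Gamma$ of $K_\Gamma$, so a simplex stabilizer embeds into the symmetric group on the simplex's vertices), and \emph{without inversions} --- this last being precisely why one passes to the barycentric subdivision, since there an edge joins barycenters of simplices of different dimensions while the action preserves dimension. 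The quotient $\mathcal K/\Gamma$ decorated by cell stabilizers is the graph of finite groups in the statement.

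For the key step I would take $p\colon\widetilde{\mathcal K}\to\mathcal K$ to be the universal cover; since $\mathcal K$ is a graph, $\widetilde{\mathcal K}$ is a tree, and I would model it in the usual way as the set of homotopy-rel-endpoints classes of paths in $\mathcal K$ based at the vertex $1$ (homotopies taken within $\mathcal K$, so that these classes are exactly the reduced edge--paths), with $p$ the endpoint map. The points of $\widetilde{\mathcal K}$ lying over the vertex set of $K_\Gamma$ --- i.e.\ over the $\Gamma$--orbit of $1$ --- are by definition exactly the elements of $V$. The $\Gamma$--action lifts: let $\widetilde\Gamma$ be the group of homeomorphisms of $\widetilde{\mathcal K}$ covering an element of $\Gamma$, so that $1\to\pi_1(\mathcal K,1)\to\widetilde\Gamma\to\Gamma\to 1$ is exact. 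A lift $\widetilde g$ of $g\in\Gamma$ is determined by $\widetilde g(\widetilde 1)$, which may be any point of $p^{-1}(g\cdot 1)$; letting $g$ vary, $\widetilde g\mapsto\widetilde g(\widetilde 1)$ gives a bijection $\widetilde\Gamma\to V$. Unwinding the definitions --- $(\widetilde g\widetilde h)(\widetilde 1)$ is the endpoint of the lift, begun at $\widetilde g(\widetilde 1)$, of the $g$--translate of the path representing $\widetilde h(\widetilde 1)$ --- one checks that this bijection carries composition in $\widetilde\Gamma$ to the product $vv'=v\cdot(\pi(v)v')$ on $V$ and carries $\widetilde\Gamma\to\Gamma$ to $\pi$, so that $V\cong\widetilde\Gamma$.

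To finish, I would note that $\widetilde\Gamma$ acts on the tree $\widetilde{\mathcal K}$ without inversions (a lift inverting an edge of $\widetilde{\mathcal K}$ would project to an element of $\Gamma$ inverting an edge of $\mathcal K$), with finite quotient graph $\widetilde{\mathcal K}/\widetilde\Gamma=\mathcal K/\Gamma$. For any cell $\widetilde\sigma$ of $\widetilde{\mathcal K}$, the projection $\widetilde g\mapsto g$ identifies $\mathrm{Stab}_{\widetilde\Gamma}(\widetilde\sigma)$ with $\mathrm{Stab}_\Gamma(p(\widetilde\sigma))$: injectively because $\pi_1(\mathcal K)$ acts freely on $\widetilde{\mathcal K}$, and surjectively because any $g$ fixing $p(\widetilde\sigma)$ admits a (unique) lift fixing $\widetilde\sigma$. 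Hence the quotient graph of groups $\widetilde{\mathcal K}/\!\!/\widetilde\Gamma$ is exactly the decorated graph $\mathcal K/\Gamma$, and the fundamental theorem of Bass--Serre theory gives $V\cong\widetilde\Gamma\cong\pi_1(\mathcal K/\Gamma)$, which is the content of the lemma; virtual freeness and computability of a presentation then come for free, as explained.

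I expect the main obstacle to be the identification $V\cong\widetilde\Gamma$ of the second step: a priori $V$ is only visibly an extension of $\Gamma$ by the free group $\pi_1(\mathcal K)$, and the real work is to see that it is in fact the fundamental group of the quotient \emph{graph of groups} --- equivalently, that the finite vertex groups of $\mathcal K/\Gamma$ are genuinely visible inside $V$. (For example, torsion in $\Gamma$ fixing a simplex of the Rips complex produces torsion in $V$ through ``out-and-back'' paths to the fixed barycenter, and the bijection with $\widetilde\Gamma$ must be checked to be compatible with the non-standard product on $V$.) Once this identification is secured, the remaining assertions are routine consequences of Bass--Serre theory for finite graphs of finite groups.
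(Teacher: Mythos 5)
Your argument is correct and takes essentially the same route as the proof the paper relies on for this lemma (it cites Dahmani--Guirardel, Lemma 9.9, and summarizes that proof inside the proof of Lemma \ref{explicit path presentation}): realize $V$ as the group of lifts of the $\Gamma$-action to the universal cover $T$ of $\mathcal K$, acting on the tree $T$ $\pi$-equivariantly with quotient graph of groups $T/V\cong\mathcal K/\Gamma$, and conclude by Bass--Serre theory. Since the paper gives no independent proof, there is nothing further to compare.
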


We need a slightly stronger statement about $V$.

\begin{lemma}\label{explicit path presentation}
A finite presentation $\langle Y|R\rangle$ of $V$ is computable.
Moreover one can compute each $y\in Y$ as an explicit path in
$\mathcal K$.
\end{lemma}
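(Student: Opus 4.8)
The plan is to make computable every choice entering the proof of the preceding lemma and then to read off the generators of $V$ as concrete paths. First I would note that $\mathcal K$ is a connected, locally finite graph that is explicitly computable from the presentation of $\Gamma$: a vertex of $\mathcal K$ is a simplex of $R_{50\delta}(K_\Gamma)$, i.e.\ a finite subset $S\subseteq\Gamma$ with $\diam_{K_\Gamma}(S)\le 50\delta$, and an edge of $\mathcal K$ joins $S\subsetneq S'$. Since $\Gamma$ is hyperbolic its word problem is solvable, so $\diam_{K_\Gamma}$ is computable and any finite portion of $\mathcal K$ around a given vertex can be built; moreover two vertices $S,S'$ of $\mathcal K$ lie in one $\Gamma$-orbit iff $S'=gS$ for some $g$, and fixing $s_0\in S$ forces $g\in\{s's_0^{-1}:s'\in S'\}$, a finite candidate set each member of which is tested with the word problem. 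Hence $\Gamma$-orbit equality of cells of $\mathcal K$ is decidable, each cell stabilizer (a finite subgroup of $\Gamma$) is computable, and so are the elements of $\Gamma$ realizing the edge-group monomorphisms. This makes the finite graph of finite groups $\mathcal K/\Gamma$ of the preceding lemma fully explicit, together with a chosen lift in $\mathcal K$ of each of its vertices and edges.

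Next I would choose a maximal tree $T_0$ in the finite graph underlying $\mathcal K/\Gamma$, taking as basepoint the vertex carrying $\{1\}\in\Gamma$ (a $0$-simplex of the Rips complex, hence a vertex of $\mathcal K$) and $\{1\}$ itself as its chosen lift. Bass--Serre theory then furnishes the standard finite presentation $\langle Y\mid R\rangle$ of $\pi_1(\mathcal K/\Gamma,T_0)\cong V$: the set $Y$ consists of generating sets of the (finite) vertex groups together with one stable letter $t_e$ for each edge $e$ of $\mathcal K/\Gamma$, and $R$ consists of the vertex-group relations, the edge relations $t_e\,\alpha_e(g)\,t_e^{-1}=\omega_e(g)$ as $g$ ranges over generators of each edge group, and the relations $t_e=1$ for $e\in T_0$. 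All of these are computable from the explicit data above, which proves the first assertion.

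For the ``moreover'' I would exhibit each $y\in Y$ as a path in $\mathcal K$ compatible with the isomorphism $V\cong\pi_1(\mathcal K/\Gamma)$. Using breadth-first search in the connected locally finite graph $\mathcal K$, compute for each vertex $\bar v$ of $\mathcal K/\Gamma$ with chosen lift $\tilde v$ a path $p_{\bar v}$ in $\mathcal K$ from $\{1\}$ to $\tilde v$. A generator $g$ of the vertex group $\mathrm{Stab}_\Gamma(\tilde v)$ is then represented by the path $p_{\bar v}\cdot(g\cdot\overline{p_{\bar v}})$ that runs from $\{1\}$ out to $\tilde v$ and back along the $g$-translate of $p_{\bar v}$, ending at $g\in\Gamma$; this is a genuine element of $V$ since its endpoint is a vertex of $K_\Gamma$, and $\pi$ sends it to $g$. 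For each edge $e$ of $\mathcal K/\Gamma$ the chosen lift $\tilde e$ starts at $\widetilde{\iota(e)}$ and ends at $\gamma_e\cdot\widetilde{\tau(e)}$ for a computable $\gamma_e\in\Gamma$, and the stable letter $t_e$ is represented by $p_{\iota(e)}\cdot\tilde e\cdot(\gamma_e\cdot\overline{p_{\tau(e)}})$, again an element of $V$ with $\pi$-image $\gamma_e$. Retracing the proof of \cite[Lemma 9.9]{DGu} shows these are precisely the preimages of $Y$ under that isomorphism.

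I expect the only real difficulty to be the bookkeeping in this last step: pinning down orientation and composition conventions so that the concrete ``lollipop'' and edge paths literally match the abstract Bass--Serre generators under the isomorphism of the preceding lemma, and keeping careful track of the fact that $V$ is built only from paths terminating at vertices of $K_\Gamma$ (equivalently, at elements of $\Gamma$), not at arbitrary vertices of $\mathcal K$. Everything else reduces to the solvability of the word problem in $\Gamma$ and to breadth-first search in a computable locally finite graph, and is routine.
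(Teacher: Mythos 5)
Your proposal is correct in substance but follows a genuinely different route from the paper. The paper does not work with the $\Gamma$-action on $\mathcal K$ and the Bass--Serre presentation of $\mathcal K/\Gamma$ directly; instead it passes to the universal cover $\theta\colon T\rightarrow\mathcal K$ and uses the facts (from the proof of \cite[Lemma 9.9]{DGu}) that $V$ acts on $T$ with $T/V\cong\mathcal K/\Gamma$ and that $\theta$ is $\pi$-equivariant. It then finds the generators \emph{by search inside $V$}: elements of $V$ are enumerated by their length as paths in $\mathcal K$, each is tested for whether it fixes a chosen vertex of $T$ (the known, computable sizes of the stabilizers tell the algorithm when to stop), and the stable letters $\gamma_{y}$ are found as explicit elements of $V$ carrying one chosen lift in $T$ to another in the same $V$-orbit. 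You instead compute the graph of finite groups $\mathcal K/\Gamma$ from the $\Gamma$-action (orbit and stabilizer computations in $\Gamma$, which are indeed decidable as you argue), write down its standard presentation, and give closed-form path representatives of the generators: lollipops $p_{\bar v}\cdot(g\cdot\overline{p_{\bar v}})$ for vertex-group elements and edge paths for stable letters. Your route is more explicit and avoids any enumeration, but it shifts the burden onto the step you flag as bookkeeping: one must verify that these concrete paths really are the images of the Bass--Serre generators under an isomorphism $V\cong\pi_1(\mathcal K/\Gamma,T_0)$, equivalently that the lollipop attached to $g\in\mathrm{Stab}_\Gamma(\tilde v)$ is exactly the element of $V$ whose lifted action on $T$ fixes the lift of $\tilde v$ determined by $p_{\bar v}$. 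That verification is true and not hard (lift $p_{\bar v}$ and the translated return path to $T$ and chase the lifting criterion), but it is precisely where the universal cover re-enters, i.e.\ the object the paper manipulates from the start; the paper's search-based argument buys termination and correctness directly from the abstract isomorphism $T/V\cong\mathcal K/\Gamma$ without ever having to pin down which path corresponds to which abstract generator, at the cost of being less explicit than your formulas.
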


Note that a finite presentation of $V$ is computable by Lemma 2.1.
The point of Lemma 2.2 is that one can compute each $y\in Y$ as an
explicit path in $\mathcal K$.

\begin{proof}
First we note that any finite neighborhood of $\mathcal K$ can be
constructed since the word problem of $\Gamma$ is solvable. Let
$\theta:T\rightarrow\mathcal K$ be the universal cover of $\mathcal
K$. By \cite[proof of Lemma 9.9]{DGu} we have:
\begin{enumerate}
\item $V$ acts on $T$ by isometries;
\item $\theta$ is $\pi$-equivariant;
\item $T/V$ (given a graph of group structure as in \cite[page 54]{Serre})
is isomorphic to $\mathcal{K}/\Gamma$ as graph of groups. In
particular they are isomorphic as graphs.
\end{enumerate}

Starting with a vertex $t_{0}\in T$ which is mapped to
$1_{\Gamma}\in \mathcal K$ by $\theta$, one can construct any finite
neighborhood  of $t_{0}$ in $T$ and compute the map $u$ in that
neighborhood.  (This is because one can construct any finite
neighborhood of $1_{\Gamma}\in \mathcal{K}$ and one can construct
any finite ball of the universal cover of a locally finite graph.)

Since the quotient map from $\mathcal{K}$ to $\mathcal{K}/\Gamma$
can be computed in any finite neighborhood of $1_{\Gamma}$,  one can
compute the quotient map from $T$ to $T/V$ explicitly in any finite
neighborhood of $t_{0}$.

Given any $v\in V$ explicitly as a path in $\mathcal{K}$ and any
point $t$ in $T$ in a explicitly constructed finite neighborhood of
$t_{0}$,  one can compute $v\cdot t$ explicitly. (Here we assume
that the explicitly constructed finite neighborhood of $t_{0}$ also
contains $v\cdot t$. )

One can compute elements of the vertex groups explicitly as paths in
$\mathcal{K}$: vertex groups are stabilizers of vertices of $T$. One
can find those vertices explicitly. For any given element of $V$,
one can check whether it fixes any given vertex. Hence one can
compute all elements in the stabilizer of a given vertex by checking
each element of $V$ according to their lengths (in $\mathcal{K}$).
One knows when to stop checking because one knows the size of each
stabilizer.

One can also compute all the $\gamma_{y}$ (as in \cite{Serre})
explicitly because given any two vertices of $T$ in the same
$V$-orbit , one can find an explicit element of $V$ that takes one
to the other.

Since the presentation of $V$ given by $\pi_{1}(T/V)$ has elements
of the vertex groups of $(G,T/V)$ and $\gamma_{y}$'s as generators,
the proof is complete.
\end{proof}

We now recall how Dahmani and Guirardel reduce an equation system in
$\Gamma$ to finitely many equation systems in $V$.

By introducing new variables, one can find an equivalent triangular
equation system for any equation of length greater than three. (The
picture behind this is that one can cut a polygon into triangles.)
For equation of length two, say $x^{2}=1$, one can replace it by the
triangular equation with $x^{2}1=1$. Therefore it suffice to
consider only triangular equation systems.

Denote a set of variables (unknowns) by $U$ and a finite set of
words in $X$ by $C$. Let $\mathcal{F}=\{
\gamma_{i,1}\gamma_{i,2}\gamma_{i, 3}=1, i=1,\cdot\cdot\cdot, n\}$
be a triangular equation system over $\Gamma$, where
$\gamma_{i,j}\in U\cup C$.

Let $\mu_{0}=8$ and $\lambda_{0}=400\delta m_{0}$, where $m_{0}$ is
a bound on the cardinality of balls of radius $50\delta$ in
$K_{\Gamma}$. Consider $(\lambda_{1}, \mu_{1}) = (\lambda_{0},
\mu_{0}+2+2/\lambda_{0})$, so that any concatenation of a
$(\lambda_{0}, \mu_{0})$-quasi-geodesic with a path of length $1$ at
each extremity is a $(\lambda_{1}, \mu_{1})$-quasi-geodesic.

\begin{definition}\label{def(QG(V))}
$\mathcal{QG}(V)\subset V$ is the set of elements such that the
corresponding reduced path in $\mathcal{K}$ is $(\lambda_{1},
\mu_{1})$-quasi-geodesic.
\end{definition}

Denote by $V_{\leq l}$ the set of elements of $V$ whose
corresponding reduced path in $\mathcal{K}$ has length at most $l$.

Dahmani and Guirardel use Rips and Sela's canonical representatives
\cite{RS} to prove the following key proposition \cite[Proposition
9.10]{DGu}, which allows them to reduce any equation system in
$\Gamma$ to finitely many equation systems in $V$.

\begin{proposition}
[Dahmani-Guirardel]\label{Dahmani-Guirardel} Then there exists a
computable constant $\kappa_{1}$ (depending on $\Gamma$ and
$\mathcal{F}$) such that for any solution $(g_{u})\in \Gamma^{U}$ we
have the following:

For each $u\in U\cup \bar U$, there exists $\tilde g_{u}\in
\mathcal{QG}(V) $ with $\tilde g_{\bar u}=(\tilde g_{u})^{-1}$ and
$\pi(\tilde g_{u})=g_{u}$. For each
$\gamma_{i,1}\gamma_{i,2}\gamma_{i,3}=1$ in $\mathcal{F}$ and for
each $j\in \{1,2,3 \hspace{2mm}mod\hspace{2mm} 3\}$, there exists
$l_{i,j}\in \mathcal{QG}(V)$ and $c_{i,j}\in V_{\leq \kappa_{1}}$
such that:
\begin{enumerate}
\item $\tilde g_{\gamma_{i,j}}=l_{i,j}c_{i,j}(l_{i,j+1})^{-1}$ in $V$;
\item $\pi(c_{i,1}c_{i,2}c_{i,3})=1$ in $\Gamma$.
\end{enumerate}

Conversely, given any family of elements of $V$: $(\tilde
g_{u})_{u\in U\cup \bar U}$; $(l_{i,j})_{    1\leq i\leq n, 1\leq
j\leq 3}$ and $(c_{i,j})_{    1\leq i\leq n, 1\leq j\leq 3}$
satisfying $\tilde g_{\bar u}=(\tilde g_{u})^{-1}$, (1) and (2),
respectively, the family $g_{u}= \pi(\tilde g_{u})$ is a solution of
$\mathcal{F}$.
\end{proposition}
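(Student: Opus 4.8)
The plan is to reduce the statement to the Rips--Sela theory of canonical representatives for hyperbolic groups, carried out inside the complex $\mathcal K$ (equivalently, inside $V$) rather than directly in the Cayley graph $K_\Gamma$, so that torsion in $\Gamma$ causes no difficulty. Recall the canonical representatives of Rips and Sela \cite{RS}: fixing the hyperbolicity data of $K_\Gamma$, to each $g\in\Gamma$ one associates a $(\lambda_0,\mu_0)$-quasi-geodesic word $\rho(g)$ over $X$ with $\rho(g^{-1})$ the formal inverse of $\rho(g)$, such that for any triple $g_1,g_2,g_3$ with $g_1g_2g_3=1$ the three anchored paths $\rho(g_1)$, $g_1\rho(g_2)$, $(g_1g_2)\rho(g_3)$ form an approximate tripod: each side has two ``branch points'' lying within a bounded distance of a $\delta$-approximate incenter, the segment of each side between its two branch points has length at most a constant computable from $\delta$ and $m_0$, and the two segments of adjacent sides joining a common branch point to a triangle vertex coincide. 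One handles the constants $c\in C$ the same way, replacing each by $\rho$ of the element it names.

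Given a solution $(g_u)$, I would lift to $\mathcal K$. The path in $K_\Gamma\subset\mathcal K$ spelled by $\rho(g_u)$ and based at $1_\Gamma$ has a homotopy class $\tilde g_u\in V$ with $\pi(\tilde g_u)=g_u$; by the choice $\lambda_0=400\delta m_0$, $\mu_0=8$ and the passage to $(\lambda_1,\mu_1)$, which absorbs the length-$1$ adjustments coming from the barycentric subdivision of $R_{50\delta}(K_\Gamma)$, the reduced representative of $\tilde g_u$ in $\mathcal K$ is $(\lambda_1,\mu_1)$-quasi-geodesic, so $\tilde g_u\in\mathcal{QG}(V)$. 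The inverse convention on $\rho$ gives $\tilde g_{\bar u}=(\tilde g_u)^{-1}$, and likewise one fixes lifts $\tilde g_c$ of the constants with $\pi(\tilde g_c)=c$.

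For each triangular equation $\gamma_{i,1}\gamma_{i,2}\gamma_{i,3}=1$ I would substitute the solution, obtaining $g_{i,1}g_{i,2}g_{i,3}=1$ in $\Gamma$, and read off the tripod decomposition of the three canonical representatives: writing each side, between its two branch points, as a concatenation of an initial leg, a bounded central segment, and a reversed final leg, and lifting to $\mathcal K$, one obtains $l_{i,j}\in\mathcal{QG}(V)$ and $c_{i,j}\in V_{\le\kappa_1}$, where $\kappa_1$ is the computable bound on the $\mathcal K$-length of the central segments, such that $\tilde g_{\gamma_{i,j}}=l_{i,j}c_{i,j}(l_{i,j+1})^{-1}$ in $V$; applying $\pi$ to this identity and using $g_{i,1}g_{i,2}g_{i,3}=1$ yields $\pi(c_{i,1}c_{i,2}c_{i,3})=1$ in $\Gamma$. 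For the converse, given such data set $g_u=\pi(\tilde g_u)$; since $\pi\colon V\to\Gamma$ is a homomorphism and $\pi(\tilde g_c)=c$, the relations $\tilde g_{\gamma_{i,j}}=l_{i,j}c_{i,j}(l_{i,j+1})^{-1}$ telescope under $\pi$ to $\pi(\tilde g_{\gamma_{i,1}})\pi(\tilde g_{\gamma_{i,2}})\pi(\tilde g_{\gamma_{i,3}})=\pi(l_{i,1})\pi(c_{i,1}c_{i,2}c_{i,3})\pi(l_{i,1})^{-1}=1$, so $(g_u)$ solves $\mathcal F$.

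The heart of the argument --- and the main obstacle --- is the first step: establishing canonical representatives with the tripod decomposition and, crucially, with the central segments bounded by a constant $\kappa_1$ explicitly computable from $\Gamma$ and $\mathcal F$, since this computability is exactly what makes the reduction algorithmically useful. This is the technical core of \cite{RS}; the extra work is to route it through $\mathcal K$ so that finite stabilizers in $\Gamma$ cause no obstruction, and to track how the quasi-geodesic constants change under the barycentric subdivision.
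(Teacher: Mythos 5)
The paper itself gives no proof of this statement: it is quoted as an imported result, namely Dahmani--Guirardel \cite[Proposition 9.10]{DGu}, whose proof adapts the Rips--Sela canonical representatives \cite{RS} to hyperbolic groups that may have torsion by constructing the representatives inside $\mathcal K$, i.e.\ as elements of $V$. Your sketch correctly identifies that strategy, and your converse direction is complete and correct: the relations in (1) telescope, and applying the homomorphism $\pi$ together with (2) gives $\pi(\tilde g_{\gamma_{i,1}})\pi(\tilde g_{\gamma_{i,2}})\pi(\tilde g_{\gamma_{i,3}})=\pi(l_{i,1})\,\pi(c_{i,1}c_{i,2}c_{i,3})\,\pi(l_{i,1})^{-1}=1$ (granting the implicit normalization $\pi(\tilde g_{c})=c$ for constants, which the paper later enforces through the rational constraints $L_{\gamma_{i,j}}$).

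The forward direction, however, has a genuine gap. You invoke Rips--Sela to attach to each $g\in\Gamma$ a $(\lambda_0,\mu_0)$-quasi-geodesic word over $X$ with the tripod property and then lift these words to $V$ as homotopy classes of paths. But the Rips--Sela construction is carried out for torsion-free hyperbolic groups, and it does not go through verbatim in the Cayley graph $K_{\Gamma}$ when $\Gamma$ has torsion; this failure is exactly the reason Dahmani and Guirardel abandon words over $X$ and build canonical representatives directly as paths in $\mathcal K$, i.e.\ in $V$ and $\mathcal{QG}(V)$. So the step you defer at the end (``route it through $\mathcal K$'') is not a bookkeeping matter of adjusting quasi-geodesic constants under the quasi-isometry $\mathcal K\simeq K_{\Gamma}$: it is the actual content of the cited proposition --- redoing the cylinder construction, the coherent choices, the bounded central parts, and the computability of the bound, equivariantly with respect to a $\Gamma$-action having finite stabilizers. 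As written, your argument therefore reduces the proposition essentially to itself plus the easy converse, which mirrors how the paper uses it (as a black box) but is not an independent proof. A smaller point: in your first paragraph you claim the central segments are bounded by a constant computable from $\delta$ and $m_0$ alone, whereas $\kappa_1$ must be allowed to depend on the system $\mathcal F$ (in particular on the number $n$ of triangular equations), as the statement records and as your final paragraph correctly says; the two claims should be reconciled.
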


The $\tilde g_{u}$'s are called {\em Canonical Representatives}.

For each tuple $\bar c$ of $c_{i,j}\in V_{\leq \kappa_{1}}$
satisfying (2), define an equation system $\mathcal{F}(\bar c)$ in
$V$ as follows: Let $\{v_{i,j} \mid 1\leq i\leq n, 1\leq j\leq 3 \}$
be a set of variables such that $v_{i,j}$ and $v_{i',j'}$ are the
same variable if and only if $\gamma_{i,j}$ and $\gamma_{i',j'}$ are
the same. Let $\{p_{i,j}\mid 1\leq i\leq n, 1\leq j\leq 3 \}$ be a
set of distinct variables. We define
\begin{equation*}  \mathcal{F}(\bar c)=
\left\{ \begin{array}{rl} p_{i,1}c_{i,1}(p_{i,2})^{-1}=v_{i,1}
                                \\ p_{i,2}c_{i,2}(p_{i,3})^{-1}=v_{i,2}
                                \\ p_{i,3}c_{i,3}(p_{i,1})^{-1}=v_{i,3}
                         \end{array} \right.   \hspace{4mm}   1\leq i\leq n; 1\leq j\leq 3;  \end{equation*}

We call $\mathcal{F}(\bar c)$ {\em tripod equation system}
associated with $\bar c$.

Suppose that $\{\bar v_{i,j}, \bar p_{i,j}\}$ is a solution of
$\mathcal{F}(\bar c)$ in $V$. Let $\tilde v_{i,j}=\pi(\bar
v_{i,j})$. Then since $c_{i,j}$ satisfy Condition (2) in Proposition
\ref{Dahmani-Guirardel} we have that
\begin{equation*}
\tilde v_{i,1}\tilde v_{i,2}\tilde v_{i, 3}=1, \hspace{4mm}
i=1,\cdot\cdot\cdot, n.
\end{equation*}
With these equations we are almost ready to say that $\{\tilde
v_{i,j}\}$ is a solution of $\mathcal{F}$. The one extra thing we
need is that $\tilde v_{i,j}$ should equal $\gamma_{i,j}$ whenever
$\gamma_{i,j}$ is a constant in $\Gamma$. This is ensured by using
Equation system with rational constraints, which is defined below.

\begin{definition}\label{rational constraints}
A {\em rational subset} of a finitely presented group $G$ is the
image of a regular language of some generating set under the
canonical projection.

Let $C$ be a generating set for $G$. Denote a set of variables by
$U$. An {\em equation system with rational constraints} is an a
finite collection of equations $w_{i}=1$ where $w_{i}\in (U\cup
C)^{*}$ with finite set of pairs $(u, R_{u})$, where $u\in U$ and
$R_{u}$ is a rational subset of $G$.

Let $\mathcal{E}=\{w_{i}=1\mid i=1\cdots n\}\cup\{(u, R_{u})\mid
u\in U)\}$ be an equation system with rational constraints in $G$. A
{\em solution} of $\mathcal{E}$ in $G$ is a map $f: U\rightarrow G$
such that $f(u)\in R_{u}$ and the induced monoid homomorphism
(sending each $c\in C$ to itself) $\bar f:(U\cup C)^{*}\rightarrow
G$ maps $w_{i}$ to $1$ for $1\leq i\leq n$.
\end{definition}

\begin{convention}
We will use the notation $u\in R_{u}$ instead of $(u, R_{u})$ when
we write rational constraints in equation systems.
\end{convention}

We add rational constraints to the tripod equations system
$\mathcal{F}(\bar c)$ as follow: If $\gamma_{i,j}$ is a constant in
$\Gamma$, let $L_{\gamma_{i,j}}$ be the rational subset (of $V$)
$\{\tilde g\in \mathcal{QG}(V)\mid \pi(\tilde g)=\gamma_{i,j}\}$.
Otherwise let $L_{\gamma_{i,j}}=V$. We define:
\begin{equation*}  \mathcal{F}_{R}(\bar c)=
\left\{ \begin{array}{rl} p_{i,1}c_{i,1}(p_{i,2})^{-1}=v_{i,1}
                                \\ p_{i,2}c_{i,2}(p_{i,3})^{-1}=v_{i,2}
                                \\ p_{i,3}c_{i,3}(p_{i,1})^{-1}=v_{i,3}
                                \\v_{i,j}\in L_{\gamma_{i,j}}
                         \end{array} \right.   \hspace{4mm}   1\leq i\leq n; 1\leq j\leq 3;  \end{equation*}

Suppose $\{\bar v_{i,j}, \bar p_{i,j}\}$ to denote a solution of
$\mathcal{F}_{R}(\bar c)$ in $V$. When $\gamma_{i,j}$ is a constant,
the rational constraint $v_{i,j}\in L_{\gamma_{i,j}}$ implies that
$\tilde v_{i,j}=\pi(\bar v_{i,j})=\gamma$. Therefore $\{\tilde
v_{i,j}\}$ is a solution of $\mathcal{F}$.

On the other hand, by Proposition \ref{Dahmani-Guirardel} if
$\mathcal{F}$ has a solution in $\Gamma$ then $\mathcal{F}_{R}(\bar
c)$ has a solution in $V$ for some tuple $\bar c$ of $c_{i,j}\in
V_{\leq \kappa_{1}}$ satisfying (2) in Proposition
\ref{Dahmani-Guirardel}. All such tuples $\bar c$ can be computed
since each $c_{i,j}$ has bounded length. Note that the length bound
$\kappa_{1}$ on $c_{i,j}$ given by Proposition
\ref{Dahmani-Guirardel} is determined by $\Gamma$ and $\mathcal{F}$.
Hence the set of tuples $\bar c$ of $c_{i,j}\in V_{\leq \kappa_{1}}$
satisfying (2) in Proposition \ref{Dahmani-Guirardel} is determined
by $\Gamma$ and $\mathcal{F}$. Since there are finitely many $\bar
c$, any equation system in any hyperbolic group can be reduced to
finitely many equation systems $\mathcal{F}_{R}(\bar c)$ in some
virtually free group, which are solvable by the next theorem
\cite[Theorem 3]{DGu}:
\begin{theorem}[Dahmani-Guirardel]\label{Virtually free}
There exists an algorithm which takes as input a presentation of a
virtually free group G, and a system of equations with constants in
G, together with a set of rational constraints, and which decides if
there exists a solution or not.
\end{theorem}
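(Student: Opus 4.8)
The plan is to reduce the statement, through the Bass--Serre structure of $G$, to the decidability of equations with rational constraints over a free group --- a classical fact due to Makanin, in the strengthened form with rational constraints established by Plandowski and by Diekert--Guti\'errez--Hagenah. From the given presentation one first computes a decomposition of $G$ as the fundamental group of a finite graph of finite groups $\mathcal G$, with Bass--Serre tree $T$ on which $G$ acts with finitely many orbits of cells and finite cell stabilizers. As a normalization I would put the system in triangular form exactly as recalled for $\Gamma$ above, and I would use that the rational subsets of $G$ form an effectively Boolean algebra (Benois' theorem for free groups, transported along a finite-index free subgroup): this allows the constraints to be presented by finite-state automata over a fixed generating set and, when convenient, complemented.

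The core of the argument is a canonical-representatives construction for the action on $T$, in the spirit of Rips--Sela but with the simplification that geodesics in a tree are unique. Given a solution $(g_u)_{u\in U}$ of a triangular system, I would represent each $g_u$ by the reduced edge-path in $T$ from a fixed basepoint to its translate, and for each triangle $\gamma_{i,1}\gamma_{i,2}\gamma_{i,3}=1$ record the centre where the three geodesics meet. The variables then split as $g_{\gamma_{i,j}}=\ell_{i,j}\,c_{i,j}\,\ell_{i,j+1}^{-1}$ (indices mod $3$), with $\ell_{i,j}$ long outer segments and $c_{i,j}$ short connecting elements confined to a bounded neighbourhood of the centre; the decisive point is a computable bound $\kappa$ on the $T$-length of the $c_{i,j}$ depending only on $G$ and on the system. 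This is the tree analogue of Proposition~\ref{Dahmani-Guirardel}. With it in hand one enumerates the finitely many admissible tuples $\bar c$ of short connectors and, for each, extracts an induced ``tripod'' system whose unknowns $\ell_{i,j}$ range over reduced (indeed quasigeodesic) paths in $T$.

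It then remains to decide, for fixed $\bar c$, whether the induced system has a solution. A reduced path in $T$ is encoded by the sequence of edges it crosses together with a bounded amount of vertex-group data, i.e.\ by a word in a suitable finitely generated free group plus finitely many choices in the finite vertex groups. Each tripod equation $\ell_{i,1}c_{i,1}\ell_{i,2}^{-1}=v_{i,1}$, and its companions, together with the rational constraints $v_{i,j}\in L_{\gamma_{i,j}}$, thereby becomes an equation system with rational constraints over a free group, the finitely many vertex-group choices being carried out in an enclosing finite search (or folded into the constraint automata). By the free-group case cited above each such system is decidable, and the system over $G$ has a solution precisely when one of the finitely many reduced systems does; running them all, inside the finite searches, yields the algorithm. (Alternatively, one can argue by induction on the graph of groups $\mathcal G$ via transfer theorems for solvability of rational-constrained equations under amalgamation and HNN extension over finite edge groups, the base case of finite groups being immediate.)

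The main obstacle is the canonical-representatives step: obtaining the computable length bound $\kappa$ on the connecting elements, and --- the subtler bookkeeping --- threading a variable that occurs in several triangles consistently through all the relevant centres of $T$, since a variable conjugated by different tree elements in different equations cannot simply be substituted away (this is exactly why one works relative to $T$ rather than naively collapsing $G$ onto a single free group). Once bounded overlap of the pieces is in place, the passage to finitely many free-group systems with rational constraints is routine, and their decidability is quoted.
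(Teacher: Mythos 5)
The paper does not prove this statement; it is quoted verbatim as \cite[Theorem~3]{DGu} and used as a black box, so there is no in-paper proof to compare your sketch against. Measured against the literature, your outline also diverges from what Dahmani--Guirardel actually do: they do \emph{not} reduce the virtually free case to Makanin's theorem for free groups. Their proof builds a Rips-machine/foliation theory (band complexes, generalized equations, and an elimination process) adapted to the action of the virtually free group on its Bass--Serre tree, and decidability is extracted from termination of that process, not from a quoted free-group oracle. The induction-over-the-graph-of-groups alternative you mention in passing is closer in spirit to the Lohrey--Senizergues route, which the paper also cites, but you do not develop it.

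Beyond the mismatch of route, there is a concrete gap in your main line. You claim that a reduced path in the Bass--Serre tree $T$ ``is encoded by the sequence of edges it crosses together with a bounded amount of vertex-group data, i.e.\ by a word in a suitable finitely generated free group plus finitely many choices in the finite vertex groups,'' and that these choices can be absorbed into a finite search. That is not so: a Bass--Serre normal form is an alternating word $g_0 e_1 g_1 \cdots e_n g_n$ with one vertex-group element per vertex along the path, so the amount of vertex-group data grows linearly with the path length and cannot be enumerated finitely independently of the (unbounded) solution. Consequently the tripod equations $\ell_{i,1}c_{i,1}\ell_{i,2}^{-1}=v_{i,1}$ do \emph{not} become equations over a free group: the words involved are subject to the vertex-group relations at every syllable, and equality of such words is not free reduction. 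Fixing this would require either a genuine canonical-representative calculus over the graph of groups (which is essentially what the Rips machine supplies, after substantial work) or a real transfer theorem for rational-constrained equations across amalgams and HNN extensions over finite edge groups, neither of which is present in your sketch.
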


Rational constraints play a very important role in our proof of
Theorem \ref{Main} as we explain in the next subsection.

\subsection{Idea of the proof of Theorem 1.1}

Let $E$ be a central extension of a hyperbolic group by a finitely
generated abelian group. Given a finite presentation of $E$, by
\cite[Proposition 1.1]{BR} one can compute the finite presentations
of all terms in a short exact sequence
\begin{equation*}
1\rightarrow A\rightarrow E\rightarrow\Gamma\rightarrow 1
\end{equation*}
where $\Gamma$ is a hyperbolic group and $A$ is a finitely generated
abelian group. Denote the inclusion from $A$ to $E$ by $i$ and the
projection from $E$ to $\Gamma$ by $p$.

Let $\mathcal{E}=\{ e_{i,1}e_{i,2}e_{i, 3}=1, i=1,\cdot\cdot\cdot,
n\}$ be a triangular equation system in $E$. Denote by $\mathcal{F}$
the equation system $p(\mathcal{E})=\{p(e_{i,1})p(e_{i,2})p(e_{i,
3})=1, i=1,\cdot\cdot\cdot, n\}$ in $\Gamma$, where
$p(e_{i,j})=e_{i,j}$ if $e_{i,j}$ is a variable. Here is an attempt
to check whether $\mathcal{E}$ has a solution in $E$:

Use the algorithm in Theorem \ref{Virtually free} to solve all the
tripod equation systems with rational constraints
$\mathcal{F}_{R}(\bar c)$ associated to $\mathcal{F}$ and $\Gamma$.

If none of the $\mathcal{F}_{R}(\bar c)$ has a solution, then
$\mathcal{F}$ has no solution in $\Gamma$ and hence $\mathcal{E}$
has no solution in $E$.

Suppose some $\mathcal{F}_{R}(\bar c)$ has a solution. Let $\{\bar
v_{i,j},\bar p_{i,j}\}$ be a solution. Then $\{\pi(\bar v_{i,j})\}$
is a solution of $\mathcal{F}$ in $\Gamma$. Let $s:\Gamma\rightarrow
E$ be a section. In general, $\{s(\pi(\bar v_{i,j}))\}$ is not a
solution of $\mathcal{E}$. But we know that $s(\pi(\bar
v_{i,1}))s(\pi(\bar v_{i,2}))s(\pi(\bar v_{i,3}))\in A$ for all
$1\leq i\leq n$. We set up the following equation system in $A$:
\begin{equation*}
\mathcal{W}(\bar c, \bar v_{i,j},\bar
p_{i,j})=\{w_{i,1}w_{i,2}w_{i,3}=-s(\pi(\bar v_{i,1}))s(\pi(\bar
v_{i,2}))s(\pi(\bar v_{i,3}))\mid 1\leq i\leq n\},
\end{equation*}
where $w_{i,j}$ and $w_{i',j'}$ represent the same variable if
$e_{i,j}$ and $e_{i', j'}$ are the same variable in $\mathcal{E}$
and $w_{i,j}=e_{i,j}-s(p(\bar v_{i,j}))$ is a constant in $A$ when
$e_{i,j}$ is a constant in $E$. Suppose $\{\bar w_{i,j}\}$ is a
solution of $\mathcal{W}(\bar c, \bar v_{i,j},\bar p_{i,j})$. It is
easy to check that $\{\bar e_{i,j}=s(\pi(\bar v_{i,j}))\bar
w_{i,j}\}$ is a solution of $\mathcal{E}$. Linear algebra can be
used to solve $\mathcal{W}(\bar c, \bar v_{i,j},\bar p_{i,j})$ since
$A$ is finitely generated and abelian. We check all
$\mathcal{W}(\bar c, \bar v_{i,j},\bar p_{i,j})$ to see if they have
a solution in $A$. If at least one does, than $\mathcal{E}$ has a
solution.

We point out that if $\mathcal{E}$ has a solution, then the process
above will detect it. (To see this, suppose $\mathcal{E}$ has a
solution $\{\bar e_{i,j}\}$, then $\{p(\bar e_{i,j})\}$ is a
solution of $\mathcal{F}$. Hence there is some $\bar c$ such that
$\mathcal{F}(\bar c)$ has a solution $\{\bar v_{i,j},\bar p_{i,j}\}$
with $\pi(\bar v_{i,j})=p(\bar e_{i,j})$. It is easy to check that
$\{\bar w_{i,j}=\bar e_{i,j}-s(p(\bar v_{i,j}))\}$ is a solution of
$\mathcal{W}(\bar c, \bar v_{i,j},\bar p_{i,j})$.) Hence if the
above process terminates before any solution is found, then
$\mathcal{E}$ has no solution in $E$.

There is a obvious problem about solving $\mathcal{E}$ this way:
$\mathcal{F}=p(\mathcal{E})$ can have infinitely many solutions in
$\Gamma$ even if $\mathcal{E}$ has no solution. In this case at
least of the tripod equation system with rational constraints
$\mathcal{F}_{R}(\bar c)$ has infinitely many solutions. So there
are infinitely many $\mathcal{W}(\bar c, \bar v_{i,j},\bar p_{i,j})$
to check. But none of them has a solution and so the process of
checking will never terminate. Here is an explicit example of this
phenomenon:

\begin{example}
Let $S$ be the genus two surface and $T^{1}S$ be the unit tangent
bundle of $S$. The following short exact sequence defines
$\pi_{1}(T^{1}S)$ as a central extension of $\pi_{1}(S)$, which is
hyperbolic.
\begin{equation*}
1\rightarrow \mathbb{Z}\rightarrow <a, b, c, d, z\mid
[a,b][c,d]=z^{-2}, z\hspace{1mm} central    >\rightarrow <a, b, c,
d\mid [a, b][c,d]=1>\rightarrow 1
\end{equation*}
Let $x$ be a variable. The equation $[a,b][x, d]=1$ has no solution
in $\pi_{1}(T^{1}S)$ but its projection in $\pi_{1}(S)$ has
infinitely many solutions: $x=cd^{n}$ for all $n\in \mathbb{Z}$. To
see this, first solve $[a,b][x, d]=1$ in $\pi_{1}(S)$, where we have
$[a,b][x, d]=[a,b][c,d]$. By simply algebraic manipulation, we got
$x^{-1}cd=dx^{-1}c$. But in $\pi_{1}(S)$, the only elements that
commute with $d$ are of the form $d^{n}$ for $n\in\mathbb{Z}$. Hence
we have that $x=cd^{n}$ for all $n\in \mathbb{Z}$ are the only
solutions. Now any solution of $[a,b][x, d]=1$ in $\pi_{1}(T^{1}S)$
must project down to a solution of its projection in $\pi_{1}(S)$.
Hence if there is a solution, then it has the form $x=cd^{n}z^{m}$.
Plug it into $x$, we have $[a,b][c,d]=1$, which does not hold in
$\pi_{1}(T^{1}(S))$. Therefore $[a,b][x, d]=1$ has no solution in
$\pi_{1}(T^{1}(S))$.
\end{example}

To deal with the above problem, we use rational constraints.

For each $\mathcal{F}_{R}(\bar c)$ we will have finitely many ways
to add more rational constraints to it. We denote the resulting
equation systems with rational constraints by
$\mathcal{F}_{R}^{k}(\bar c)$. We will prove the following:
\begin{enumerate}
\item If $\{\bar v_{i,j}, \bar p_{i,j}\}$ is solution of $\mathcal{F}_{R}(\bar c)$ then it is a solution of $\mathcal{F}_{R}^{k}(\bar c)$ for some $k$.
\item If $\{\bar v_{i,j}, \bar p_{i,j}\}$ and $\{\bar v'_{i,j}, \bar p'_{i,j}\}$ are solutions of $\mathcal{F}_{R}^{k}(\bar c)$, then we have
\begin{equation*}
s(\pi(\bar v_{i,1}))s(\pi(\bar v_{i,2}))s(\pi(\bar
v_{i,3}))=s(\pi(\bar v'_{i,1}))s(\pi(\bar v'_{i,2}))s(\pi(\bar
v'_{i,3}))
\end{equation*}
and one can compute this value from the rational constraints in
$\mathcal{F}_{R}^{k}(\bar c)$. We denote the above value by $s(\bar
c, k)$.
\end{enumerate}

Now instead of solving possibly infinitely many equation systems
$\mathcal{W}(\bar c, \bar v_{i,j},\bar p_{i,j})$, we only need to
solve, for each $\bar c$ and $k$, the corresponding pair of equation
systems $\mathcal{F}_{R}^{k}(\bar c)$ and
\begin{equation*}
\mathcal{W}^{k}(\bar c)=\{w_{i,1}w_{i,2}w_{i,3}=-s(\bar c, k)\mid
1\leq i\leq n\}.
\end{equation*}
By (1), (2) we have that $\mathcal{E}$ has a solution in $E$ if and
only if $\mathcal{F}_{R}^{k}(\bar c)$ has a solution in $V$ and
$\mathcal{W}^{k}(\bar c)$ has a solution in $A$ for some $\bar c$
and $k$.

Since there are finitely many tuples $\bar c$ and for each $\bar c$
there are finitely many $k$, there are finitely many pairs of
$\mathcal{F}_{R}^{k}(\bar c)$ and $\mathcal{W}^{k}(\bar c)$. So any
equation system in $E$ can be reduced to finitely many equation
systems with rational constraints in $V$ and finitely many equation
systems in $A$. Therefore we know that the Equation Problem in
central extensions of hyperbolic groups is solvable.

To define the rational constraints in $\mathcal{F}_{R}^{k}(\bar c)$
we need tools from \cite{NR2}, which we recall in the next
subsection.

\subsection{Central extensions of hyperbolic groups}
In this subsection, we recall some definitions and facts from \cite{NR2}. We include the proof of some of these facts since they are not in \cite{NR2}.

Recall that $K_{\Gamma}$ is the Cayley graph of $\Gamma$ with
respect to $X$. Denote by $L$ the language (over $X$) of all
$(\lambda, \nu)$-quasi-geodesic words in $K_{\Gamma}$. Note that $L$
is regular (See \cite{Holt-Rees}). Let $N$ a finite-state automaton
accepting $L$. Then $L$ is an asynchronous biautomatic structure of
$\Gamma$ (See \cite{Gromov} and \cite{NR1}).

\begin{definition}[$L$-rational]
Let $\pi: X^{*}\rightarrow \Gamma$ be the canonical projection. A
subset $T$ of $\Gamma$ is called {\em $L$-rational} if the language
$\{w\in L\mid \pi(w)\in T\}$ is regular.
\end{definition}

Let $s:\Gamma\rightarrow E$ be a section and
$\sigma_{s}:\Gamma\times\Gamma\rightarrow A$ be the cocycle defined
by $s$.

\begin{definition}
The cocycle $\sigma_{s}$ is {\em $L$-regular} if
\begin{enumerate}
\item The sets $\sigma_{s}(g, \Gamma)$ and $\sigma_{s}(\Gamma, g)$ are
finite for each $g\in\Gamma$.
\item For each $h\in\Gamma$ and $a\in A$ the subset $\{g\in\Gamma\mid \sigma_{s}(g,h)=a\}$ is an
$L$-rational subset of $\Gamma$.
\end{enumerate}
\end{definition}

\begin{theorem}[Neumann-Reeves]\label{regularity of rho}
For any central extensions of hyperbolic groups $E$ defined by
$1\rightarrow A\rightarrow E\rightarrow\Gamma\rightarrow 1$, there
exists a section $\rho:\Gamma\rightarrow E$ such that
$\sigma_{\rho}$ is $L$-regular, where $L$ is any biautomatic
structure of $\Gamma$.
\end{theorem}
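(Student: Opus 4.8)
The plan is to build $\rho$ so that its cocycle $\sigma_{\rho}$ first has finite image and then has $L$-rational level sets; the first part is where the hyperbolicity of $\Gamma$ is genuinely used, and the second is a finite-automaton argument driven by the fellow-traveler property of $L$. I would begin by reducing to two special cases for $A$. Writing $A=\mathbb{Z}^{k}\oplus T$ with $T$ finite, any cocycle $\sigma\colon\Gamma\times\Gamma\to A$ is the direct sum of its coordinate cocycles valued in $\mathbb{Z}$ and in $T$; since a finite product of finite sets is finite and regular (hence $L$-rational) subsets are closed under finite intersection, $\sigma$ is $L$-regular once each of these coordinate cocycles is. So it suffices to treat $A=\mathbb{Z}$ and $A$ finite separately, and when $A$ is finite condition (1) of $L$-regularity is automatic, leaving only condition (2).

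For $A=\mathbb{Z}$ the first task is to exhibit a cocycle with finite image. Here one uses hyperbolicity of $\Gamma$ in the form of the surjectivity of the comparison map $H^{2}_{b}(\Gamma;\mathbb{R})\to H^{2}(\Gamma;\mathbb{R})$ (this is the point behind Gersten's boundedness theorem, on which Neumann and Reeves build): pick a bounded real cocycle representing the image of the extension class, write the integral extension cocycle as that bounded cocycle minus a coboundary $\delta\phi$ with $\phi\colon\Gamma\to\mathbb{R}$, and then replace $\delta\phi$ by $\delta\psi$ where $\psi\colon\Gamma\to\mathbb{Z}$ is the pointwise nearest-integer approximation of $\phi$. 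Since $\phi-\psi$ is bounded, the new cocycle differs from a bounded cocycle by a bounded coboundary, so it is bounded and, being integer valued, has finite image. Taking $\rho$ to be a section realizing this cocycle $\sigma=\sigma_{\rho}$, both $\sigma_{\rho}(g,\Gamma)$ and $\sigma_{\rho}(\Gamma,g)$ lie in the finite image of $\sigma_{\rho}$, so condition (1) holds; for finite $A$ it holds automatically.

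It remains to check condition (2): for each $h\in\Gamma$ and $a\in A$ the language $\{w\in L\mid \sigma(\pi(w),h)=a\}$ is regular, the statement for $\sigma(h,\cdot)$ being symmetric. I would construct a finite-state transducer that reads a normal form $w$ for $g=\pi(w)$, uses the right-multiplier automata of the biautomatic structure for the (fixed) element $h$ to produce a normal form for $gh$ while the two words fellow-travel within a bounded distance, and simultaneously keeps track of the $A$-valued discrepancy between a lift (to $E$) of the prefix of $w$ read so far and a lift of the corresponding prefix of the $gh$-normal form. Because $L$ is a biautomatic (possibly asynchronous) structure, only a bounded window of each word is relevant at each step; because $\sigma$ is bounded, the running discrepancy --- which at each stage equals, up to a bounded correction determined by the current windows, the value $\sigma(g',h)$ for the current prefix element $g'$ --- stays in a finite set. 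Hence the transducer has finitely many states, and having read all of $w$ it computes $\sigma(g,h)$ up to a bounded final correction; intersecting with the automaton accepting $L$ and retaining the accepting states whose output is $a$ presents the language above as regular, so the level set is $L$-rational. The finite-$A$ case is identical and easier.

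The main obstacle is twofold. The serious conceptual input is the second step --- that the extension class has a bounded cocycle representative --- since this is precisely where hyperbolicity of $\Gamma$ enters and it depends on the bounded cohomology of hyperbolic groups. The main technical work is making the transducer of the third step precise: verifying that the quantity it tracks really does range over a finite set (so that one truly gets a finite automaton), and that one may choose the cocycle $\sigma$ within its cohomology class so that $\sigma(g',h)$ and the incremental change in the relevant lift are determined, up to bounded ambiguity, by a bounded suffix of the normal form being read --- here one again invokes that quasigeodesic triangles in $\Gamma$ are uniformly thin.
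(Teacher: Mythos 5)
Your first two steps are fine and do reproduce the first half of Neumann--Reeves's argument: splitting $A$ into $\mathbb{Z}$ and finite summands, and producing a bounded integer-valued cocycle from surjectivity of $H^{2}_{b}(\Gamma;\mathbb{R})\to H^{2}(\Gamma;\mathbb{R})$ by correcting the primitive to its nearest-integer approximation; this gives condition (1). The genuine gap is in your third step. Your transducer must update, at each letter, the discrepancy between lifts of the two fellow-traveling prefixes. If you define that discrepancy using $\rho$ of the prefix elements $g_t$ and $g_t'$, then the update involves $\sigma(g_t,x_{t+1})$ and $\sigma(g_t',x_{t+1}')$, quantities depending on the entire prefixes; being able to track these with a finite automaton is exactly condition (2), so the argument is circular. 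If instead you define the state as the difference of the letter-by-letter lifts $\rho(x_1)\cdots\rho(x_t)$ and $\rho(x_1')\cdots\rho(x_t')$, the transition is computable, but the $A$-component of the state is the difference of the accumulated weights $\sum_{s}\sigma(x_1\cdots x_s,x_{s+1})$ along the two combing paths, and boundedness of individual cocycle values does not bound this accumulated difference: that the lifted combing paths fellow-travel in $E$ is essentially the conclusion of the Neumann--Reeves theorem, not an available hypothesis. Boundedness alone cannot suffice for your claim, since adding to $\sigma$ the bounded coboundary $\delta\phi$ of the indicator function of a wild subset of $\Gamma$ keeps the cocycle bounded while its level sets have no reason to remain rational; so the choice of cocycle within its bounded class, and its compatibility with the combing, carry real content. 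Your closing appeal to thin triangles, asserting that $\sigma(g',h)$ is determined up to bounded ambiguity by a bounded suffix of the normal form, is precisely this unproved crux, and you exhibit no cocycle with that property.

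For comparison, the paper does not reprove this statement at all: it quotes \cite{NR2}, where regularity is established for one specific biautomatic structure built to fit the bounded cocycle (the language of maximising words -- this construction is exactly the device your sketch is missing), and then passes to an arbitrary biautomatic structure $L$ via \cite[Proposition 1.1]{NR1}. To repair your proposal you would either have to import those two results, or else carry out an analogue of the maximising-word construction yourself, which is the heart of \cite{NR2} rather than a routine automaton argument.
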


In \cite{NR2} the above theorem is proved for a specific biautomatic
structure of $L$.  (In \cite{NR2}, $L$ is the language of maximising
words. See \cite[Lemma 2.1]{NR2} for more detail. But then the above
theorem follows easily by applying \cite[Proposition 1.1]{NR1}.

\begin{remark}
Note that given a presentation of $E$, the $L$-rational structure of
$\{g\in\Gamma\mid \sigma_{\rho}(g,h)=a\}$ can be computed (i.e. a
finite state automaton which accepts all $L$-words representing
elements of $\{g\in\Gamma\mid \sigma_{\rho}(g,h)=a\}$  can be
constructed explicitly). One can see this by examining the proof of
the above theorem from \cite{NR2} and using the fact that any finite
balls of the Cayley graphs of $E$ and $\Gamma$ can be constructed.
\end{remark}

Let $\rho:\Gamma\rightarrow E$ be a section such that
$\sigma_{\rho}$ is $L$-regular. Hence we know that
$\{g\in\Gamma\mid\sigma_{\rho}(g, x)=a\}$ is $L$-rational for $x\in
X$ and $a\in A$. We also need the fact that
$\{g\in\Gamma\mid\sigma_{\rho}(x, g)=a\}$ is $L$-rational for $x\in
X$ and $a\in A$. Since \cite{NR2} doesn't include a proof, we give a
proof.

\begin{lemma}\label{right regularity }
$\{g\in\Gamma\mid\sigma_{\rho}(x, g)=a \}$ is $L$-rational for $x\in
X$ and $a\in A$.
\end{lemma}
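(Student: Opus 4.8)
The goal is to deduce the ``left-variable'' $L$-rationality of $\{g\in\Gamma\mid\sigma_{\rho}(x,g)=a\}$ from the ``right-variable'' $L$-rationality of sets of the form $\{g\in\Gamma\mid\sigma_{\rho}(g,y)=a'\}$, which we already have. The plan is to use the cocycle identity to rewrite $\sigma_{\rho}(x,g)$ in terms of cocycle values in which the variable $g$ occupies a right-hand slot. Recall the cocycle identity for $\sigma_{\rho}$: for all $g_1,g_2,g_3\in\Gamma$,
\begin{equation*}
\sigma_{\rho}(g_1,g_2)+\sigma_{\rho}(g_1g_2,g_3)=\sigma_{\rho}(g_2,g_3)+\sigma_{\rho}(g_1,g_2g_3).
\end{equation*}
Setting $g_1=x$, $g_2=g$, $g_3=g^{-1}$ (or, alternatively, isolating $\sigma_\rho(x,g)$ after a suitable substitution) lets us express $\sigma_{\rho}(x,g)$ as a sum of terms of the form $\pm\sigma_{\rho}(\cdot,\cdot)$ in which $g$, $g^{-1}$, or $xg$ appears, together with the bounded ``correction'' term $\sigma_\rho(x,g)$-free quantities. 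The cleanest route is: for fixed $x$, write $g=x^{-1}(xg)$ and use that $\sigma_\rho(x^{-1},xg)$ relates to $\sigma_\rho(x,g)$ via the identity with $g_1=x^{-1},g_2=x,g_3=g$, namely $\sigma_\rho(x^{-1},x)+\sigma_\rho(1,g)=\sigma_\rho(x,g)+\sigma_\rho(x^{-1},xg)$, i.e.\ $\sigma_\rho(x,g)=\sigma_\rho(x^{-1},x)-\sigma_\rho(x^{-1},xg)$ since $\sigma_\rho(1,g)=0$ (after normalizing the section so $\rho(1)=1$). This still has $g$ in a right slot of $\sigma_\rho(x^{-1},\cdot)$, so it is not yet enough; but $x^{-1}\in X$ since $X$ is symmetric, so $\{h\in\Gamma\mid\sigma_\rho(x^{-1},h)=a'\}$ is \emph{not} one of the sets we are given either — we are given the sets with the variable on the \emph{left}. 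Hence the real content is a symmetry/inversion argument, which I outline next.

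First I would reduce to understanding the effect of inversion: the map $g\mapsto g^{-1}$ sends $L$-quasigeodesics to $L$-quasigeodesics (reverse and invert each letter), and this operation is realized by a finite-state transducer on $X^{*}$, so the image of an $L$-rational set under $g\mapsto g^{-1}$ is again $L$-rational. Next, applying the cocycle identity with $g_1=g$, $g_2=g^{-1}$, $g_3=x$ gives $\sigma_\rho(g,g^{-1})+\sigma_\rho(1,x)=\sigma_\rho(g^{-1},x)+\sigma_\rho(g,g^{-1}x)$; and with $g_1=g^{-1},g_2=g,g_3=?$ one relates $\sigma_\rho(g^{-1},g)$ to the other slot. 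The key point is that $\sigma_\rho(x,g)$ can be written as a $\Z$-linear combination (with coefficients $\pm1$) of: (i) finitely many bounded terms depending only on $x$ and on $\sigma_\rho(g,g^{-1})$ — and the latter takes finitely many values as $g$ ranges over $\Gamma$, by part (1) of $L$-regularity applied appropriately, OR is controlled because $\sigma_\rho(g,g^{-1})=\sigma_\rho(g^{-1},g)$ lies in a finite set; and (ii) terms of the form $\sigma_\rho(h(g),x)$ where $h(g)\in\{g^{-1},\,gx^{-1},\dots\}$ is obtained from $g$ by a finite-state-computable modification, i.e.\ terms with $g$ (transduced) on the \emph{left} and $x\in X$ on the \emph{right}, which are exactly the sets given to be $L$-rational by hypothesis.

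Putting this together: the preimage $\{g\mid\sigma_\rho(x,g)=a\}$ becomes a finite Boolean combination of sets of the form $\phi^{-1}(\{h\mid\sigma_\rho(h,x')=a'\})$ where $\phi$ is a finite-state-computable self-map of $\Gamma$ (inversion, or right-multiplication by a fixed element — the latter is clearly $L$-rationality-preserving since it is a fellow-traveler-type modification handled by the asynchronous biautomatic structure $L$) and $x'\in X$, $a'\in A$. Since $L$-rational subsets of $\Gamma$ are closed under finite union, intersection, complement (within $L$), and under preimages by such finite-state maps, and since by hypothesis each $\{h\mid\sigma_\rho(h,x')=a'\}$ is $L$-rational, we conclude $\{g\mid\sigma_\rho(x,g)=a\}$ is $L$-rational. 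Finiteness of the index set in the Boolean combination uses part (1) of the $L$-regularity definition, which guarantees $\sigma_\rho(x,\Gamma)$ is finite so only finitely many $a\in A$ are relevant and the auxiliary bounded terms range over finite sets.

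\textbf{Main obstacle.} The delicate step is the bookkeeping in the cocycle identity: choosing substitutions so that \emph{every} occurrence of the variable $g$ ends up either inside a bounded/finite-valued term or inside a cocycle value whose non-$g$ argument is a \emph{fixed} generator $x'\in X$ and with $g$ (possibly after inversion or a bounded right-translation) on the side for which $L$-rationality is already known. One must also verify that the relevant finite-state maps on $X^{*}$ genuinely preserve $L$-rationality — for inversion this is the ``reverse language'' fact together with regularity of $L$, and for bounded right-translation $g\mapsto gx'^{-1}$ this follows from the asynchronous fellow-traveler property of the biautomatic structure $L$ — and that all the index sets over which we take the Boolean combination are finite, which is where property (1) of $L$-regularity is used. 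Everything else (closure of $L$-rational sets under Boolean operations and finite-state preimages, constructibility of the automata) is routine and, per the Remark, effective.
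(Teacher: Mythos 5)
Your reduction stalls exactly at the step you flag as ``delicate'', and the gap is real. Carrying out the cocycle bookkeeping, e.g.\ with $g_1=x$, $g_2=g$, $g_3=g^{-1}x^{-1}$ and then splitting $\sigma_{\rho}(g,g^{-1}x^{-1})$, one gets
\begin{equation*}
\sigma_{\rho}(x,g)=\sigma_{\rho}(g,g^{-1})-\sigma_{\rho}(g^{-1},x^{-1})-\sigma_{\rho}\big(xg,(xg)^{-1}\big)+c_{x},
\end{equation*}
where $c_{x}$ depends only on $x$; every variant of the substitution leaves such ``self-pairing'' terms $\sigma_{\rho}(h,h^{-1})$ with $h$ varying along with $g$. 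Your justification that these are harmless --- that $\{\sigma_{\rho}(g,g^{-1})\mid g\in\Gamma\}$ is finite ``by part (1) of $L$-regularity applied appropriately'' --- is incorrect: condition (1) gives finiteness of $\sigma_{\rho}(g,\Gamma)$ and $\sigma_{\rho}(\Gamma,g)$ only for each \emph{fixed} $g$, and says nothing when both arguments vary; the identity $\sigma_{\rho}(g,g^{-1})=\sigma_{\rho}(g^{-1},g)$ is an equality of two equal quantities, not a finiteness statement. In general $\sigma_{\rho}(g,g^{-1})$ is unbounded; indeed the entire machinery of Section 3.3 (the passage to $E'$, the maps $\iota_{1},\dots,\iota_{4}$, and the Parity Predicting Automaton) exists precisely because only the \emph{parity} of $\sigma_{\rho}(w,w^{-1})$ can be tracked, and the automaton that tracks it is built from $M^{2}_{x,a}$, i.e.\ from the very lemma you are proving, so appealing to that circle of ideas here would be circular. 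Worse, rearranging the display gives $\sigma_{\rho}(g,g^{-1})-\sigma_{\rho}(xg,(xg)^{-1})=\sigma_{\rho}(x,g)+\sigma_{\rho}(g^{-1},x^{-1})-c_{x}$, so recognizing that difference is, up to the finitely many possible values of $\sigma_{\rho}(g^{-1},x^{-1})$, equivalent to the original problem: your Boolean combination of left-slot sets never closes up.

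The sound ingredients in your plan (formal inversion $w\mapsto w^{-1}$ preserves $L$ and regularity via language reversal; multiplication by a fixed generator preserves $L$-rationality via the automatic structure) are not sufficient on their own. The paper avoids cocycle bookkeeping entirely: following \cite{NR1}, each $w\in L$ is lifted to a word $w'$ over a finite subset $D\subset E$ whose prefixes evaluate to $\rho$ of the prefixes of $w$, and $L'=\{w'\mid w\in L\}$ is regular; then $F=\{(w_{1},w_{2})\in L'\times L'\mid \rho(x)w_{1}=w_{2}i(a)\}$ is recognized by a two-tape automaton because the two lifts $K$-fellow-travel in the Cayley graph of $E$ (this is where biautomaticity of $L$ enters), and projecting $F$ to its first coordinate and pulling back to $L$ yields regularity of $\{v\in L\mid\sigma_{\rho}(x,v)=a\}$. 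To salvage your route you would need an independent proof that $g\mapsto\sigma_{\rho}(g,g^{-1})$ is computed by a finite-state device on $L$-words, which is not available at this point of the paper.
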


\begin{proof}
Consider the finite subset $D=\{\rho(x)i(-\sigma_{\rho}(g, x))\mid
g\in \Gamma, x\in X\}^{\pm 1}$ of $E$ as in \cite[Proposition
2.2]{NR1}. If $w=x_{1}\cdots x_{n}\in X^{*}$ then there exists
$w'\in D^{*}$ whose initial segments have values $\rho(x_{1}),
\rho(x_{1}x_{2}),\cdots,\rho(x_{1}\cdots x_{n})$. Let $L'$ be the
language $L'=\{w'\mid w\in L\}$. Then by \cite[Proposition 2.2]{NR1}
$L'$ is a regular language. Let $Z$ be a generating set for $A$. It
is easy to see that $D\cup i(Z)$ is a generating set of $E$. Let
$K_{E}$ be the Cayley graph of $E$ with respect to $D\cup i(Z)$.

For $x\in X$ and $a\in A$ consider the set $F=\{(w_{1}, w_{2})\in
L'\times L'\mid \rho(x)w_{1}=w_{2}i(a)\}$. Since $L$ is a
biautomatic  structure on $\Gamma$, we can apply \cite[Proposition
2.2]{NR1} and see that there exist $K$ such that $w_{1}$ and $w_{2}$
$K$-fellow-travel in $K_{E}$ if $(w_{1}, w_{2})\in F$. Hence $F$ is
the language of a two-tape finite state automata. Therefore
$F_{1}=\{w_{1}\in L'\mid \exists w_{2}\in L',
\rho(x)w_{1}=w_{2}i(a)\}$, which is the projection of $F$ to the
first factor, is regular. Let $F'_{1}=\{v\in L\mid\sigma_{\rho}(x,
v)=a\}$. One can use the finite-state automaton accepting $F_{1}$ to
read words in $L$ and $F'_{1}$ is the language accepted by it.
Therefore $F'_{1}$ is regular, which is equivalent to
$\{g\in\Gamma\mid\sigma_{\rho}(x, g)=a \}$ being $L$-rational.
\end{proof}

\begin{convention}[s-coordinates]
For any section $s:\Gamma\rightarrow E$ and any $g\in \Gamma$, $a\in
A$, we denote by $(g,a)_{s}$ the element $s(g)i(a)$ in $E$ and call
$(g,a)$ the {\em $s$-coordinates} of $s(g)i(a)$. For simplicity we
omit the subscript when the section under consideration is clear.
\end{convention}

We will use a symmetric section to lift the solutions of
$\mathcal{E}$ in $\Gamma$ to $E$. The symmetric section $q$
associated to $\rho$ lands in an extension of $E$, which we now
define.

Since $A$ is finitely generated and abelian, it is isomorphic to
$\mathbb{Z}^{n}\oplus\mathbb{Z}_{d_{1}}\oplus\cdots\oplus\mathbb{Z}_{d_{m}}$.
We identify $A$ with
$\mathbb{Z}^{n}\oplus\mathbb{Z}_{d_{1}}\oplus\cdots\oplus\mathbb{Z}_{d_{m}}$
by fixing an isomorphism between them. Let
$A'=\mathbb{Z}^{n}\oplus\mathbb{Z}_{2d_{1}}\oplus\cdots\oplus\mathbb{Z}_{2d_{m}}$.

Let $\iota_{1}$ be the injective homomorphism from $A$ to $A'$
defined by
\begin{equation*}
\iota_{1}(a_{1},\cdots, a_{n}, b_{1},\cdots,b_{m})=(2a_{1},\cdots,
2a_{n}, 2b_{1},\cdots,2b_{m}).
\end{equation*}
This map determines a pushout extension $1\rightarrow A'\rightarrow
E'\rightarrow\Gamma\rightarrow 1$ in the following sense:  Let
$E'=\Gamma\times A'$ be the direct product of $\Gamma$ and $A'$ as
sets. The map from $A'$ to $E'$ is the inclusion from $A'$ to
$\{1\}\times A'$ and the map from $E'$ to $\Gamma$ is the projection
of $E'$ to the first factor. Make $E'$ into a group by defining
\begin{equation*}
(g_{1},a_{1})(g_{2},a_{2})=(g_{1}g_{2},
a_{1}+a_{2}+\iota_{1}(\sigma_{\rho}(g_{1}, g_{2}))).
\end{equation*}
Let $\iota_{2}$ denote the natural inclusion from $E$ to $E'$, which
maps $(g, a)$ to $(g, \iota_{1}(a))_{\rho}$.  Let $\rho'$ be the
section from $\Gamma$ to $E'$ defined by $\rho'=\iota_{2}\rho$.
Hence an element $(g, a')$ in $E'$ has $\rho'$-coordinates $(g, a')$
and we have $\sigma_{\rho'}(g,h)=\iota_{1}(\sigma_{\rho}(g,h))$ for
any $g,h\in \Gamma$.

\begin{convention}\label{iota 3}
Let $\iota_{3}$ be the map (not a homomorphism) from $A$ to $A'$
defined by
\begin{equation*}
\iota_{3}(a_{1},\cdots, a_{n}, b_{1},\cdots,b_{m})=(a_{1},\cdots,
a_{n}, b_{1},\cdots,b_{m}).
\end{equation*}
To simplify notation, in the rest of the paper, if
$\sigma_{\rho}(-,-)$ appears in the second component of the
$\rho'$-coordinates of an element in $E'$, it represents the element
$\iota_{3}(\sigma_{\rho}(-,-))$ in $A'$.
\end{convention}

\begin{definition}\label{symmetric section}
The {\em symmetric section} $q:\Gamma\rightarrow E'$ is defined by:
\begin{equation*}
q(g)=(g, -\sigma_{\rho}(g, g^{-1}))
\end{equation*}
in $\rho'$-coordinate of $E'$.
\end{definition}

\begin{lemma}
$q$ is symmetric, i.e., $q(g)q(g^{-1})=1$.
\end{lemma}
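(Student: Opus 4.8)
The plan is to compute $q(g)q(g^{-1})$ directly in $\rho'$-coordinates and check that it equals $(1,0)$, the identity of $E'$. By Definition \ref{symmetric section} we have $q(g) = (g, -\sigma_\rho(g,g^{-1}))$ and $q(g^{-1}) = (g^{-1}, -\sigma_\rho(g^{-1},g))$, where (per Convention \ref{iota 3}) the second coordinates are really $\iota_3$ of the indicated cocycle values. Using the multiplication rule in $E'$, namely $(g_1,a_1)(g_2,a_2) = (g_1g_2, a_1 + a_2 + \iota_1(\sigma_\rho(g_1,g_2)))$, the product is
\begin{equation*}
q(g)q(g^{-1}) = \bigl(gg^{-1},\ -\sigma_\rho(g,g^{-1}) - \sigma_\rho(g^{-1},g) + \iota_1(\sigma_\rho(g,g^{-1}))\bigr) = \bigl(1,\ -\sigma_\rho(g^{-1},g) + (\iota_1 - \iota_3)(\sigma_\rho(g,g^{-1}))\bigr),
\end{equation*}
so the whole task reduces to showing the second coordinate vanishes in $A'$.

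The key identity I would invoke is the standard cocycle relation for $\sigma_\rho$. Writing out the $2$-cocycle condition (associativity of the extension) for the triple $(g, g^{-1}, g)$ gives a relation among $\sigma_\rho(g,g^{-1})$, $\sigma_\rho(g^{-1},g)$, $\sigma_\rho(1,g)$ and $\sigma_\rho(g,1)$; after normalizing $\rho(1)=1$ so that $\sigma_\rho(1,g)=\sigma_\rho(g,1)=0$, this collapses to $\sigma_\rho(g,g^{-1}) = \sigma_\rho(g^{-1},g)$ (i.e. the value of the cocycle on a pair and its reverse-inverse agree). Feeding this into the displayed second coordinate, it becomes $-\sigma_\rho(g,g^{-1}) + (\iota_1-\iota_3)(\sigma_\rho(g,g^{-1}))$; but $(\iota_1 - \iota_3)(a) = \iota_3(a)$ by the definitions of $\iota_1$ and $\iota_3$ (doubling each coordinate minus the identity embedding leaves the identity embedding), so the second coordinate is $-\iota_3(\sigma_\rho(g,g^{-1})) + \iota_3(\sigma_\rho(g,g^{-1})) = 0$ in $A'$. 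Hence $q(g)q(g^{-1}) = (1,0) = 1$ in $E'$.

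The only real subtlety — and the step I would be most careful about — is the bookkeeping forced by Convention \ref{iota 3}: the second coordinate of $q(g)$ is $\iota_3(\sigma_\rho(g,g^{-1}))$ while the cocycle term produced by the multiplication rule carries $\iota_1(\sigma_\rho(g,g^{-1}))$, so the two contributions sit in $A'$ via \emph{different} maps and one must not cancel them naively. The arithmetic $\iota_1 = \iota_3 + \iota_3$ on the $\mathbb{Z}^n$-part and $\iota_1 = 2\cdot(\text{mod-}2d_i \text{ reduction})$ on the torsion part is exactly what makes the doubled group $A'$ the right target: it is what allows $-\iota_3(\sigma_\rho(g,g^{-1})) + \iota_1(\sigma_\rho(g,g^{-1})) - \iota_3(\sigma_\rho(g^{-1},g))$ to vanish once the symmetry $\sigma_\rho(g,g^{-1}) = \sigma_\rho(g^{-1},g)$ is used. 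I would also note at the outset that we may assume $\rho$ is normalized ($\rho(1)=1$), since otherwise one replaces $\rho$ by $\rho(1)^{-1}\rho$ without affecting $L$-regularity; this is what legitimizes the vanishing of $\sigma_\rho(1,g)$ and $\sigma_\rho(g,1)$ used above.
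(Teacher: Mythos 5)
Your proof is correct and follows essentially the same route as the paper: compute $q(g)q(g^{-1})$ in $\rho'$-coordinates, keep careful track of $\iota_1$ versus $\iota_3$ (indeed $\iota_1-\iota_3=\iota_3$ pointwise), and reduce everything to the identity $\sigma_{\rho}(g,g^{-1})=\sigma_{\rho}(g^{-1},g)$. The only (harmless) difference is how that identity is obtained: the paper derives it by noting $(g^{-1},-\sigma_{\rho}(g,g^{-1}))_{\rho}$ inverts $(g,0)_{\rho}$ in $E$ and comparing the two products, while you use the $2$-cocycle relation on the triple $(g,g^{-1},g)$ plus the normalization $\rho(1)=1$ --- which is legitimate, and in fact not even needed, since centrality of $i(A)$ gives $\sigma_{\rho}(1,g)=\sigma_{\rho}(g,1)$ for an arbitrary section, so the cocycle relation already yields the identity.
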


\begin{proof}
In $\rho'$-coordinates of $E'$ we have
\begin{eqnarray*}
q(g)q(g^{-1})&=&(g, -\sigma_{\rho}(g, g^{-1}))(g^{-1},
-\sigma_{\rho}(g^{-1}, g))\\&=&(1, -\sigma_{\rho}(g,
g^{-1})-\sigma_{\rho}(g^{-1},
g)+2\sigma_{\rho}(g,g^{-1}))\\&=&(1,\sigma_{\rho}(g,g^{-1})-\sigma_{\rho}(g^{-1},
g))
\end{eqnarray*}
One the other hand, in $\rho$-coordinate of $E$ we have
\begin{eqnarray*}
(g,0)(g^{-1},-\sigma_{\rho}(g,g^{-1}))=(1,-\sigma_{\rho}(g,g^{-1})+\sigma_{\rho}(g,g^{-1}))=(1,0).
\end{eqnarray*}
Hence $(g^{-1},-\sigma_{\rho}(g,g^{-1}))$ is the inverse of $(g,0)$.
So we have
\begin{eqnarray*}
(1,0)=(g^{-1},-\sigma_{\rho}(g,g^{-1}))(g,0)=(1,-\sigma_{\rho}(g,g^{-1})+\sigma_{\rho}(g^{-1},g)).
\end{eqnarray*}
Therefore we have
$-\sigma_{\rho}(g,g^{-1})+\sigma_{\rho}(g^{-1},g)=0$. So we know
\begin{eqnarray*}
q(g)q(g^{-1})=(1,\sigma_{\rho}(g,g^{-1})-\sigma_{\rho}(g^{-1},
g))=(1,0).
\end{eqnarray*}
\end{proof}

Let $\sigma_{q}$ be the cocycle corresponding to $q$.

\begin{lemma}\label{regularity of q}
$\sigma_{q}$ is $L$-regular.
\end{lemma}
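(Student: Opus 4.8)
The plan is to deduce the $L$-regularity of $\sigma_q$ from that of $\sigma_\rho$ (Theorem \ref{regularity of rho}) together with Lemma \ref{right regularity }, by expressing $\sigma_q$ in terms of $\sigma_\rho$ and then checking that each of the two defining conditions of $L$-regularity is inherited. First I would compute $\sigma_q$ explicitly. Working in $\rho'$-coordinates of $E'$, we have $q(g) = (g, -\sigma_\rho(g,g^{-1}))$, so multiplying $q(g)q(h)$ using the group law $(g_1,a_1)(g_2,a_2) = (g_1g_2, a_1+a_2+\iota_1(\sigma_\rho(g_1,g_2)))$ and comparing with $q(gh) = (gh, -\sigma_\rho(gh,(gh)^{-1}))$ gives a formula of the shape
\begin{equation*}
\sigma_q(g,h) = \iota_1\bigl(\sigma_\rho(g,h)\bigr) - \iota_3\bigl(\sigma_\rho(g,g^{-1})\bigr) - \iota_3\bigl(\sigma_\rho(h,h^{-1})\bigr) + \iota_3\bigl(\sigma_\rho(gh,(gh)^{-1})\bigr),
\end{equation*}
where $\iota_1,\iota_3$ are as in the excerpt (and I would double-check signs and the $\iota_1$ vs.\ $\iota_3$ bookkeeping against Convention \ref{iota 3}). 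The key structural point this formula reveals is that $\sigma_q(g,h)$ depends on $\sigma_\rho$ evaluated at $(g,h)$ and at the three ``diagonal'' pairs $(g,g^{-1})$, $(h,h^{-1})$, $(gh,(gh)^{-1})$.

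For condition (1), finiteness of $\sigma_q(g,\Gamma)$ and $\sigma_q(\Gamma,g)$: fix $g$. In the formula the terms $\iota_1(\sigma_\rho(g,h))$ and $\iota_3(\sigma_\rho(g,g^{-1}))$ range over finite sets as $h$ varies, by condition (1) for $\sigma_\rho$; the term $\iota_3(\sigma_\rho(h,h^{-1}))$ is more delicate since both arguments move, so here I would invoke that $\sigma_\rho(h,h^{-1})$ lies in the finite set $\{-\sigma_\rho(h^{-1},h)\} $ forced by the cocycle identity together with the boundedness already recorded (indeed $\sigma_\rho(h,h^{-1})+\sigma_\rho(h^{-1},h)$ — or the relevant combination — is controlled), and similarly $\sigma_\rho(gh,(gh)^{-1})$; alternatively, and more robustly, I would note that the map $h\mapsto \sigma_q(g,h)$ factors through left multiplication by $q(g)$ on $E'$, and the ambiguity between $q(g)q(h)$ and $q(gh)$ lies in a fixed finite subset of $A'$ (this is exactly the kind of statement proved for $D$ in \cite{NR1}). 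That gives finiteness; the argument for $\sigma_q(\Gamma,g)$ is symmetric.

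For condition (2), I need $\{g \mid \sigma_q(g,h) = a\}$ to be $L$-rational for each fixed $h \in \Gamma$ and $a \in A'$. Using the formula, for fixed $h$ the terms $\iota_3(\sigma_\rho(h,h^{-1}))$ is a constant, and $\{g \mid \iota_1(\sigma_\rho(g,h)) = a'\}$ is $L$-rational for each $a'$ by condition (2) for $\sigma_\rho$ (note $\iota_1$ is injective, so this is just a relabelling of $a'$). The genuinely new ingredients are the diagonal terms $g\mapsto \sigma_\rho(g,g^{-1})$ and $g\mapsto \sigma_\rho(gh,(gh)^{-1})$; since each takes only finitely many values (condition (1)), $L$-rationality of the full preimage reduces to $L$-rationality of each set $\{g \mid \sigma_\rho(g,g^{-1}) = a'\}$ and $\{g \mid \sigma_\rho(gh,(gh)^{-1}) = a'\}$, then intersecting finitely many such sets (rational sets are closed under finite intersection and union). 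To handle $\{g\mid \sigma_\rho(g,g^{-1})=a'\}$: I would express $\sigma_\rho(g,g^{-1})$ via the cocycle identity in terms of values $\sigma_\rho(x,\cdot)$ and $\sigma_\rho(\cdot,x)$ read letter-by-letter along an $L$-geodesic for $g$ (this is precisely the ``telescoping'' underlying Lemma \ref{right regularity } and \cite[Proposition 2.2]{NR1}), and build a finite-state automaton that tracks the running $A$-value; this uses both the left-regularity (Theorem \ref{regularity of rho}) and the right-regularity (Lemma \ref{right regularity }) of $\sigma_\rho$. For the $(gh,(gh)^{-1})$ term, post-composition with the fixed translation $g\mapsto gh$ is handled by the biautomaticity of $L$ (an $L$-word for $g$ and one for $gh$ fellow-travel), so the preimage remains $L$-rational.

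I expect the main obstacle to be condition (2) for the diagonal term $g \mapsto \sigma_\rho(g,g^{-1})$: unlike $\sigma_\rho(g,x)$ with $x$ a generator, here the second argument $g^{-1}$ also grows with $g$, so there is no direct appeal to $L$-rationality of $\{g\mid \sigma_\rho(g,h)=a\}$ for fixed $h$. The resolution is to write $g^{-1}$ as a word and telescope $\sigma_\rho(g,g^{-1})$ into a bounded sum of generator-level cocycle values along fellow-travelling $L$-representatives of $g$ and $g^{-1}$ — i.e.\ run the two-tape automaton of the kind constructed in the proof of Lemma \ref{right regularity } while accumulating the $A$-value on a finite register, which is possible exactly because $\sigma_\rho$ is bounded on generators — and read off that the accepted language (projected to $L$-words for $g$) is regular. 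Once this is in place, everything else is finite bookkeeping with closure properties of rational/regular sets.
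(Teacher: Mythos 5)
Your starting computation is right, but you stop expanding one step too early, and the gap is exactly where you yourself predicted it. The paper's proof applies the cocycle identity once more, to the term $\sigma_{\rho}(gx,(gx)^{-1})$, namely $\sigma_{\rho}(gx,(gx)^{-1})=\sigma_{\rho}(g,g^{-1})-\sigma_{\rho}(g,x)-\sigma_{\rho}(x^{-1},g^{-1})+\sigma_{\rho}(x,x^{-1})$, after which every ``diagonal'' term cancels and one is left with the clean identity $\sigma_{q}(g,x)=\sigma_{\rho}(g,x)-\sigma_{\rho}(x^{-1},g^{-1})$ (with the bookkeeping of Convention \ref{iota 3}). From this, condition (1) is immediate from the weak boundedness of $\sigma_{\rho}$, and condition (2) becomes a finite union, over $a_{1}-a_{2}=a$, of intersections $\{g\mid\sigma_{\rho}(g,x)=a_{1}\}\cap\{g\mid\sigma_{\rho}(x^{-1},g^{-1})=a_{2}\}$; the first set is $L$-rational by Theorem \ref{regularity of rho}, the second by Lemma \ref{right regularity } together with closure of regular languages under reversal. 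No analysis of the diagonal terms is ever needed.

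Because you keep the four-term formula, your plan must control $g\mapsto\sigma_{\rho}(g,g^{-1})$ and $g\mapsto\sigma_{\rho}(gh,(gh)^{-1})$, and neither of the two facts you need is available. First, $\{\sigma_{\rho}(h,h^{-1})\mid h\in\Gamma\}$ need not be finite: $L$-regularity gives finiteness of $\sigma_{\rho}(g,\Gamma)$ and $\sigma_{\rho}(\Gamma,g)$ only for each \emph{fixed} $g$, and your justification (that $\sigma_{\rho}(h,h^{-1})$ lies in the set $\{-\sigma_{\rho}(h^{-1},h)\}$) is a singleton depending on $h$, so it says nothing as $h$ varies; already the weakly bounded cocycle $\sigma(m,n)=|m+n|-|m|-|n|$ on $\Z$ has unbounded diagonal $\sigma(n,-n)=-2|n|$. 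Second, and consequently, your telescoping automaton for $\{g\mid\sigma_{\rho}(g,g^{-1})=a\}$ cannot work as described: the ``running $A$-value'' is a sum of boundedly many terms per letter but of length-many letters, hence unbounded when $A$ is infinite, so it cannot be stored in a finite register. This is precisely why the paper never attempts to make $\{g\mid\sigma_{\rho}(g,g^{-1})=a\}$ $L$-rational, and instead introduces the Parity Predicting Automaton, which records only the image $Pa\big(\sigma_{\rho}(w,w^{-1})\big)$ in the finite group $\mathbb{Z}_{2}^{n}\oplus\mathbb{Z}_{d_{1}}\oplus\cdots\oplus\mathbb{Z}_{d_{m}}$ (Lemma \ref{key property of PPA}). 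So the proposal as written does not close; the missing idea is the cancellation above, which is the whole content of the paper's short proof.
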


\begin{proof}
Let $x\in X$ and $g\in \Gamma$. In the $\rho'$-coordinate we have
\begin{eqnarray*}
q(g)q(x)&=&(g, -\sigma_{\rho}(g, g^{-1}))(x, -\sigma_{\rho}(x,
x^{-1}))\\&=&(gx, -\sigma_{\rho}(g, g^{-1})-\sigma_{\rho}(x,
x^{-1})+2\sigma_{\rho}(g,x))
\end{eqnarray*}
On the other hand, we have $q(gx)=(gx, -\sigma_{\rho}(gx,
(gx)^{-1}))$. Therefore we have
\begin{eqnarray*}
\sigma_{q}(g, x)&=&\sigma_{\rho}(gx, (gx)^{-1})-\sigma_{\rho}(g,
g^{-1})-\sigma_{\rho}(x,
x^{-1})+2\sigma_{\rho}(g,x)\\&=&\sigma_{\rho}(g,
g^{-1})-\sigma_{\rho}(g, x)-\sigma_{\rho}(x^{-1},
g^{-1})+\sigma_{\rho}(x,x^{-1})\\& &-\sigma_{\rho}(g,
g^{-1})-\sigma_{\rho}(x,
x^{-1})+2\sigma_{\rho}(g,x)\\&=&\sigma_{\rho}(g,
x)-\sigma_{\rho}(x^{-1}, g^{-1}).
\end{eqnarray*}
From the above equation we know that $\sigma_{q}(\Gamma, x)$ is
finite since both $\sigma_{\rho}(\Gamma, x)$ and
$\sigma_{\rho}(x^{-1}, \Gamma)$ are finite. Similarly, one can show
that $\sigma_{q}(x, \Gamma)$ is finite. Let $a\in A'$. From the
above computation we have
\begin{eqnarray*}
&&\{g\in
\Gamma\mid\sigma_{q}(g,x)=a\}\\=&&\bigcup_{a_{1}-a_{2}=a}(\{g\in
\Gamma\mid\sigma_{\rho}(g, x)=a_{1}\}\cap\{g\in
\Gamma\mid\sigma_{\rho}(x^{-1},g^{-1})=a_{2}\})
\end{eqnarray*}
Since $\sigma_{\rho}$ is $L$-regular, the left side of the above
equation is a finite union and  $\{g\in \Gamma\mid\sigma_{\rho}(g,
x)=a_{1}\}$ is an $L$-rational set. By Lemma 3.1 and the fact that
the reverse of a regular language is a regular language, $\{g\in
\Gamma\mid\sigma_{\rho}(x^{-1},g^{-1})=a_{2}\}$ is a $L$-rational
set. Therefore $\{g\in \Gamma\mid\sigma_{q}(g, x)=a\}$ is
$L$-rational; so $\sigma_{q}$ is regular.
\end{proof}

\begin{lemma}
$\{g\in\Gamma\mid\sigma_{q}(x, g)=a\}$ is $L$-rational for $x\in X$
and $a\in A'$.
\end{lemma}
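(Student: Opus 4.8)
The plan is to mirror the proof of Lemma~\ref{regularity of q}, exchanging the roles of the two arguments of the cocycle. First I would compute $\sigma_q(x,g)$ explicitly in terms of $\sigma_\rho$. Starting from $q(x)q(g)=q(xg)$ in $\rho'$-coordinates and expanding $\sigma_\rho(xg,(xg)^{-1})$ by repeated use of the cocycle identity for $\sigma_\rho$ (together with the normalization $\rho(1)=1$, so that $\sigma_\rho(1,\cdot)=\sigma_\rho(\cdot,1)=0$), exactly as was done for $\sigma_\rho(gx,(gx)^{-1})$ in Lemma~\ref{regularity of q}, I expect to arrive at
\[
\sigma_q(x,g)=\sigma_\rho(x,g)-\sigma_\rho(g^{-1},x^{-1}),
\]
the exact analogue of the formula $\sigma_q(g,x)=\sigma_\rho(g,x)-\sigma_\rho(x^{-1},g^{-1})$ established there. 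This is a purely formal manipulation of cocycle identities, identical in spirit to the one already carried out.

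Once this formula is in hand, the rest is the same bookkeeping as in Lemma~\ref{regularity of q}. Since $\sigma_\rho(x,\Gamma)$ and $\sigma_\rho(\Gamma,x^{-1})$ are finite, I can write
\[
\{g\in\Gamma\mid \sigma_q(x,g)=a\}=\bigcup_{a_1-a_2=a}\Bigl(\{g\in\Gamma\mid\sigma_\rho(x,g)=a_1\}\cap\{g\in\Gamma\mid\sigma_\rho(g^{-1},x^{-1})=a_2\}\Bigr)
\]
as a finite union. The set $\{g\in\Gamma\mid\sigma_\rho(x,g)=a_1\}$ is $L$-rational by Lemma~\ref{right regularity }. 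For the second set, the $L$-regularity of $\sigma_\rho$ (Theorem~\ref{regularity of rho}) gives that $\{h\in\Gamma\mid\sigma_\rho(h,x^{-1})=a_2\}$ is $L$-rational; passing to inverses corresponds to applying the formal-inverse involution of $X^{*}$ (reversal composed with the letter substitution $x\mapsto x^{-1}$), under which $L$ is closed, and since the reverse of a regular language is regular and letter substitution preserves regularity, $\{g\in\Gamma\mid\sigma_\rho(g^{-1},x^{-1})=a_2\}$ is $L$-rational as well. Intersections and finite unions of $L$-rational sets are again $L$-rational, because intersections and finite unions of regular languages are regular, so the whole set is $L$-rational. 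Finiteness of $\sigma_q(x,\Gamma)$ also follows at once from the displayed formula, as was noted in the proof of Lemma~\ref{regularity of q}.

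I do not anticipate a genuine obstacle; the only step requiring care is the first one, namely correctly collapsing $\sigma_\rho(xg,(xg)^{-1})$ to the two ``boundary'' values $\sigma_\rho(x,g)$ and $\sigma_\rho(g^{-1},x^{-1})$ via the cocycle identity, and this is already modeled verbatim by the computation in Lemma~\ref{regularity of q}. Everything afterward is the by-now-routine automata-theoretic argument (closure of regular languages under reversal, intersection and finite union, plus substitution by the letter-inverting map) that has been used repeatedly above.
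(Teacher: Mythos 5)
Your proposal is correct and is essentially the paper's own argument: the paper proves this lemma simply by "reversing the roles of $g$ and $x$" in the proof of Lemma \ref{regularity of q}, which is exactly the computation you carry out, arriving at $\sigma_{q}(x,g)=\sigma_{\rho}(x,g)-\sigma_{\rho}(g^{-1},x^{-1})$ and then using Lemma \ref{right regularity } for the first set and $L$-regularity of $\sigma_{\rho}$ together with closure of regular languages under formal inversion for the second. The only difference is that you spell out the details the paper leaves implicit, and they check out.
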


\begin{proof}
Reverse the roles of $g$ and $x$ in the proof of the last lemma.
\end{proof}

\section{\large Future Predicting Automata and Parity Predicting Automata}\label{automata}
In this section we define Future Predicting Automata and Parity
Predicting Automata. They allow us to define the rational
constraints we put on the variables of the Tripod equation systems
in $V$.

\subsection{Lifting rational constraints to V}\label{subsection of lifting map
}
Both Future Predicting Automata and Parity Predicting Automata
define rational subsets of $\Gamma$. We explain how to lift these
rational subsets to $V$ first. This allows us to determine and fix
the constants $\lambda$ and $\nu$ which define $L$.

Recall that $Y$ is a finite generating set of $V$. We define a morphism $\phi$ from $Y^{*}$ to
$X^{*}$ to lift rational constraints to $V$.

By Lemma \ref{explicit path presentation} we know each $y\in Y$ as
explicit path in $\mathcal{K}$. For each $y\in Y$ we choose and fix
a $K_{\Gamma}$-geodesic word $w_{y}$ representing $\pi(y)$. Let
$\phi: Y^{*}\rightarrow X^{*}$ be the monoid homomorphism induce by
the map from $Y$ to $X^{*}$ sending $y$ to $w_{y}$. For any regular
language $K\subset X^{*}$, it is a standard fact that
$\phi^{-1}(K)=\{w\in Y^{*}|\phi(w)\in K\}$ is a regular language
over $Y$. (See [L] Theorem 4.2.4 for a proof.) Let $\mathcal{QG}(V)$
be as in Definition \ref{def(QG(V))}.

\begin{lemma}\label{quasi-geodesic constants}
There exist $\lambda, \nu$ depending only on $\Gamma$ such that the
following is true: For any $v\in \mathcal{QG}(V)$, there exists
$w\in Y^{*}$ representing  $v$ such that $\phi(w)$ represents a
$(\lambda, \nu)$-quasi-geodesics in $K_{\Gamma}$.
\end{lemma}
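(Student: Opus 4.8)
The plan is to build the word $w$ by following the reduced path of $v$ \emph{inside the tree} $T$ (the universal cover of $\mathcal{K}$ used in the proof of Lemma \ref{explicit path presentation}), using cocompactness of the $V$--action on $T$ to cut it into uniformly short pieces. The hypothesis $v\in\mathcal{QG}(V)$ is exactly what will force the image word $\phi(w)$ to make definite progress in $K_{\Gamma}$.

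First I would record a handful of constants depending only on $\Gamma$ (equivalently, only on the finite graph of finite groups $\mathcal{K}/\Gamma$): \emph{(i)} $d_{\mathcal{K}}(a,b)\le 2\,d_{K_{\Gamma}}(a,b)$ for all $a,b\in\Gamma$, since an edge of $K_{\Gamma}$ is a $1$--simplex of $R_{50\delta}(K_{\Gamma})$ and hence becomes a path of length $2$ in $\mathcal{K}$; \emph{(ii)} a radius $R_{0}$ with $Vt_{0}$ being $R_{0}$--dense in $T$ (this exists because $T/V\cong\mathcal{K}/\Gamma$ is finite, so the action is cocompact); \emph{(iii)} the set $H=\{h\in V:\ d_{T}(t_{0},h t_{0})\le 2R_{0}+1\}$, which is finite because $T$ is locally finite and vertex stabilizers are finite. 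I then fix a $Y$--word $\widehat h$ representing each $h\in H$ and set $C_{1}=\max_{h\in H}|\pi(h)|_{X}$, $C_{2}=\max_{h\in H}|\phi(\widehat h)|_{X}$.

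Now, given $v\in\mathcal{QG}(V)$, let $z_{0}=t_{0},z_{1},\dots,z_{N}=v t_{0}$ be the consecutive vertices along the geodesic $[t_{0},v t_{0}]$ in $T$, so $p:=\theta([t_{0},v t_{0}])$ is the reduced path of $v$ and $p(i)=\theta(z_{i})$. Using $R_{0}$--density, choose for each $i$ a point $s_{i}=v_{i}t_{0}\in Vt_{0}$ with $d_{T}(z_{i},s_{i})\le R_{0}$, taking $s_{0}=t_{0}$ and $s_{N}=v t_{0}$. Then each $h_{i}:=v_{i}^{-1}v_{i+1}$ lies in $H$, so $v=v_{0}^{-1}v_{N}=h_{0}h_{1}\cdots h_{N-1}$; put $w=\widehat h_{0}\widehat h_{1}\cdots\widehat h_{N-1}\in Y^{*}$, which represents $v$. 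Since $\theta$ is $\pi$--equivariant and $\theta(t_{0})=1_{\Gamma}$, the milestones $g_{i}:=\theta(s_{i})=\pi(v_{i})$ lie in $\Gamma$, and $\phi(w)=\phi(\widehat h_{0})\cdots\phi(\widehat h_{N-1})$, read as a path in $K_{\Gamma}$ from $1_{\Gamma}$, is a concatenation of $N$ pieces of length $\le C_{2}$ whose $i$--th endpoint is $g_{i}$ (with $g_{N}=\pi(v)$). From $g_{i+1}=g_{i}\pi(h_{i})$ we get $d_{K_{\Gamma}}(g_{i},g_{j})\le C_{1}|i-j|$; and since $\theta$ is $1$--Lipschitz, $d_{\mathcal{K}}(g_{i},p(i))\le d_{T}(s_{i},z_{i})\le R_{0}$, so using that $p$ is a $(\lambda_{1},\mu_{1})$--quasi-geodesic (this is where $v\in\mathcal{QG}(V)$ is used) together with \emph{(i)} one obtains $d_{K_{\Gamma}}(g_{i},g_{j})\ge \frac12 d_{\mathcal{K}}(g_{i},g_{j})\ge \frac12\big(d_{\mathcal{K}}(p(i),p(j))-2R_{0}\big)\ge \frac{1}{2\lambda_{1}}|i-j|-\frac{\mu_{1}+2R_{0}}{2}$. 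A routine estimate then shows that a path made of $N$ pieces of length $\le C_{2}$ whose milestones satisfy these two linear bounds is a $(\lambda,\nu)$--quasi-geodesic for explicit $\lambda,\nu$ built from $C_{1},C_{2},\lambda_{1},\mu_{1},R_{0}$, hence depending only on $\Gamma$.

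The step I would be most careful about is the choice of the $s_{i}$: they must be taken \emph{on} (a bounded neighbourhood of) the geodesic $[t_{0},v t_{0}]$ in $T$, not as the partial products of some geodesic $Y$--word for $v$. Because $\pi\colon V\to\Gamma$ has infinite kernel, partial products of a $Y$--geodesic can correspond to points of $T$ straying arbitrarily far from $[t_{0},v t_{0}]$, and their $\pi$--images need not progress in $K_{\Gamma}$; it is precisely the fact that the $z_{i}$ lie on the geodesic — whose $\theta$--image is the quasi-geodesic $p$ — that separates the $g_{i}$ in $\mathcal{K}$, and hence in $K_{\Gamma}$. Everything else (local finiteness of $\mathcal{K}$ and $T$, finiteness of $H$, and the fact that all the constants depend only on $\Gamma$) is routine once this is arranged.
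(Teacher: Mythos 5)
Your proof is correct, and its overall strategy coincides with the paper's: cut the reduced path of $v$ into $N$ uniformly short pieces lying in a fixed finite subset of $V$, represent each piece by a fixed $Y$-word, bound $|\phi(w)|$ linearly in $N$, and transfer the $(\lambda_{1},\mu_{1})$-quasi-geodesity of the reduced path to $\phi(w)$ via the comparability of $d_{\mathcal{K}}$ and $d_{K_{\Gamma}}$ on $\Gamma$. Where you genuinely differ is in how the decomposition is produced. The paper stays inside $\mathcal{K}$: it walks along the vertices $s_{0},\dots,s_{n}$ of the reduced path, replaces each barycenter $s_{i}$ by an adjacent $K_{\Gamma}$-vertex $s'_{i}$, and takes the pieces $v_{i}=(s'_{i-1})^{-1}[s'_{i-1},s'_{i}]$, which have $\mathcal{K}$-length at most $3$ and hence lie in a finite set; you instead pass to the universal cover $T$, use cocompactness of the $V$-action ($R_{0}$-density of $Vt_{0}$) to pick orbit points along the geodesic $[t_{0},vt_{0}]$, and take the pieces $h_{i}=v_{i}^{-1}v_{i+1}$ in the finite set $H$. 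Your route is more abstract and more general (it uses only local finiteness, cocompactness, finite stabilizers and $\pi$-equivariance of $\theta$, not the barycentric structure of $\mathcal{K}$), at the cost of invoking the covering $T\to\mathcal{K}$ and the identification of $\theta([t_{0},vt_{0}])$ with the reduced path of $v$; the paper's version is more elementary and purely combinatorial. A real advantage of your write-up is that the milestone estimates are stated for all pairs $i,j$, so the quasi-geodesic inequality is verified on arbitrary subsegments of $\phi(w)$, a point the paper checks explicitly only for the endpoints (it extends by the same argument, but this is left implicit). One caveat about your closing remark: partial products of a geodesic $Y$-word do \emph{not} stray arbitrarily far from $[t_{0},vt_{0}]$ --- the orbit map $V\to T$ is a quasi-isometry by Milnor--\v{S}varc, and quasi-geodesics in the tree $T$ stay uniformly close to geodesics --- so the danger you describe is overstated; this does not affect your proof, which, like the paper's, simply avoids relying on an arbitrary $Y$-geodesic.
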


\begin{proof}
For any adjacent vertices $s, t\in \mathcal{K}$, let $[s, t]$ denote
the unique edge between them. If $s=t$, let $[s, t]$ be $s$
considered as a constant path.

Suppose the length of $v$ (considered as a path in $\mathcal{K}$) is $n$. Let $s_{0},\cdot\cdot\cdot,s_{n} $
be the vertices along $v$. By the construction of $\mathcal{K}$,
there are two types of vertices in $\mathcal{K}$: vertices of
$K_{\Gamma}$ and barycenters. If $s_{i}$ is a barycenter of some
simplex of $R_{50\delta}(K_{\Gamma})$, let $s'_{i}$ be a vertex of
this simplex. If $s_{i}$ is a vertex of $K_{\Gamma}$, let
$s'_{i}=s_{i}$. Then $s'_{0},\cdot\cdot\cdot,s'_{n}$ determine a
path $v'$ in $\mathcal{K}$. Here the path between $s'_{i-1}$ and
$s'_{i}$ consists of the edges $[s'_{i-1}, s_{i-1}]$,
$[s_{i-1},s_{i}]$ and $[s_{i}, s'_{i}]$ and we denote this path by
$[s'_{i-1},s'_{i}]$. Note that $v'$ equals $v$ in $V$.

Since $s'_{i}$ is a vertex in $K_{\Gamma}$, it also represent an
element of $\Gamma$. Let $v_{i}=(s'_{i-1})^{-1}[s'_{i-1},s'_{i}]$.
Then $v_{i}$ are element of $V$ and $v=v'=v_{1}\cdots v_{n}$.

For each $v_{i}$, pick a $Y$-geodesic word $w_{i}$ representing
$v_{i}$. Let $w=w_{1}\cdots w_{n}$. Then $w$ is a $Y$-word
representing $v$. Since there are finitely many elements of $V$ of
length less than four, there is an upper bound $K_{1}$ on the length
of $w_{i}$. Here $K_{1}$ depends only on $\Gamma$. Hence $w$ has
length at most $nK_{1}$.

There is an upper bound $K_{2}$ depending only on $\Gamma$ on the
length of $\phi(y)$ for all $y\in Y$ since $Y$ is finite. Let $n'$
be the length of $\phi(w)$. Then we have
\begin{equation*}
n'\leq nK_{1}K_{2}
\end{equation*}

Let $d$ be the distance in $\mathcal{K}$ between the end points of
$v$. Let $d_{X}$ be the distance in $K_{\Gamma}$ between the end
points of $v$.

Since $\mathcal{K}$ is quasi-isometric to $K_{\Gamma}$, we have
\begin{equation*}
\frac{1}{K_{0}}d-C\leq d_{X}
\end{equation*}
for some $K_{0}$ and $C$ depending only on $\Gamma$.

We know that $v$ is a $(\lambda_{1}, \nu_{1})$-quasi-geodesic in
$\mathcal{K}$ since $v\in\mathcal{QG}(V)$, hence we have
\begin{equation*}
\frac{1}{\lambda_{1}}n-\nu_{1}\leq d.
\end{equation*}

From the three equations above, we have
\begin{equation*}
\frac{1}{K_{0}K_{1}K_{2}\lambda_{1}}n'-\nu_{1}-C\leq d_{X}.
\end{equation*}

Let $\lambda=K_{0}K_{1}K_{2}\lambda_{1}$ and $\nu=\nu_{1}+C$. Then
the above equation implies that $\phi(w)$ is a $(\lambda,
\nu)$-quasi-geodesics in $K_{\Gamma}$. Note that $(\lambda, \nu)$
depends only on $\Gamma$.
\end{proof}

\subsection{Future Predicting Automata}
We are now ready to define the Future Predicting Automata. We will
first define the Future Predicting Automaton, which accepts exactly
the language $L$. The Future Predicting Automaton has many accepting
states. The languages defined by each of these accepting states give
a partition of $L$ and these subsets of $L$ will be used to define
the rational constraints we need in the next section.

For the rest of the paper, we use $L$ to denote the regular language
of words over $X$ representing $(\lambda, \nu)$-quasi-geodesic in
$K_{\Gamma}$ , where $\lambda$ and $\nu$ are given by Lemma
\ref{quasi-geodesic constants}.

By Lemma \ref{regularity of q} $\{g\in
\Gamma\mid\sigma_{q}(g,x)=a\}$ is $L$-rational for $x\in X$, $a\in
A'$ and
\begin{equation*}
A_{x}=\{\sigma_{q}(g, x)\in A'\mid g\in \Gamma\}.
\end{equation*}
is finite. For each $x\in X$, $a\in A_{x}$ we choose and fix a
finite state automaton $M_{x, a}$ which accepts exactly the
$L$-words representing elements in $\{g\in \Gamma\mid\sigma_{q}(g,
x)=a\}$. Denote the set of states and the transition map of
$M_{x,a}$ by $S_{x,a}$ and $F_{x,a}$, respectively.

\begin{definition}[FPA]\label{FPA}
The {\em Future Predicting Automaton}, denoted by $M$, is defined as
follows:

{\bf States}:  $S=\prod_{x\in X, a\in A_{x}}S_{x,a}$.

{\bf Transition function}: $F:S\times X\rightarrow S$ is defined by
\begin{equation*}
F\Big((s_{x,a})_{x\in X, a\in A_{x}}, x'\Big)=\big(F_{x,a}(s_{x,a},
x')\big)_{x\in X, a\in A_{x}} .
\end{equation*}

{\bf Initial state}: $(I_{x,a})_{x\in X, a\in A_{x}}$, where
$I_{x,a}$ is the initial state of $M_{x,a}$.

{\bf Accepting state}: The set of accepting states $T$ consists of
states $(s_{x,a})_{x\in X, a\in A_{x}}\in S$ satisfying: for all
$x'\in X$ there exists a unique $a'\in A_{x'}$ such that $s_{x',a'}$
is an accepting state of $M_{x',a'}$.
\end{definition}

\begin{remark}
Note that $M$ has finitely many states since $X$, $A_{x}$ and
$S_{x,a}$ are finite for all $x\in X$, $a\in A_{x}$.
\end{remark}

\begin{lemma}\label{FPA is L}
The language accepted by the Future Predicting Automaton is $L$.
\end{lemma}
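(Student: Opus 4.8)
The plan is to show the two languages $L(M)$ and $L$ coincide by a double inclusion, using the defining property of the accepting states of $M$ together with Lemma \ref{regularity of q}. Recall that for each $x\in X$ the finitely many automata $M_{x,a}$, $a\in A_x$, partition $L$ into the sublanguages $L_{x,a}=\{w\in L\mid \sigma_q(\pi(w),x)=a\}$; this is a genuine partition of $L$ because $\sigma_q(\cdot,x)$ is a well-defined function $\Gamma\to A'$ whose image on $\Gamma$ is exactly $A_x$, so every $w\in L$ lies in $L_{x,a}$ for exactly one $a\in A_x$. The automaton $M$ runs all the $M_{x,a}$ in parallel on the same input word, so after reading a word $w\in X^*$ its state records, in the coordinate $(x,a)$, the state of $M_{x,a}$ on input $w$.

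First I would prove $L(M)\subseteq L$. Suppose $w$ is accepted by $M$. Pick any $x'\in X$; by the definition of the accepting set $T$ there is some $a'\in A_{x'}$ with $M_{x',a'}$ accepting $w$, i.e. $w$ lies in the language of $M_{x',a'}$. But the language of each $M_{x,a}$ is by construction a sublanguage of $L$ (it consists of $L$-words representing certain elements of $\Gamma$), so $w\in L$.

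Next, $L\subseteq L(M)$. Let $w\in L$ and let $g=\pi(w)\in\Gamma$. Fix $x'\in X$. Since the $L_{x',a}$, $a\in A_{x'}$, partition $L$, there is a unique $a'\in A_{x'}$ with $w\in L_{x',a'}$, equivalently $M_{x',a'}$ accepts $w$; and for every other $a\in A_{x'}\setminus\{a'\}$ the automaton $M_{x',a}$ does not accept $w$. As this holds for each $x'\in X$, the state of $M$ after reading $w$ satisfies exactly the condition in Definition \ref{FPA} for membership in $T$: for every $x'$ there is a unique $a'\in A_{x'}$ whose coordinate is accepting. Hence $w\in L(M)$.

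I do not expect a serious obstacle here; the only point requiring care is the uniqueness clause in the accepting condition, which is precisely why the decomposition $L=\bigsqcup_{a\in A_x}L_{x,a}$ must be verified to be a partition (disjointness and exhaustiveness), and this in turn rests on $\sigma_q(\cdot,x)$ being a single-valued function on $\Gamma$ with image $A_x$ — a fact already recorded in Lemma \ref{regularity of q} and the definition of $A_x$. Once that is in hand, both inclusions are immediate from the parallel-product construction of $M$.
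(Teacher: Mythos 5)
Your proof is correct and follows essentially the same route as the paper: both inclusions are obtained from the product construction of $M$, with $L(M)\subseteq L$ coming from each $M_{x,a}$ accepting only $L$-words, and $L\subseteq L(M)$ coming from the fact that for $w\in L$ and each $x'\in X$ the value $\sigma_q(w,x')$ is a single well-defined element of $A_{x'}$, so exactly one coordinate $s_{x',a'}$ is accepting. The paper's proof is just a terser version of the same argument.
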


\begin{proof}
Suppose $w$ is accepted by $M$. Then $w$ ends up in a state in $T$.
By the definition of $T$, for any $x\in X$, there exists a unique
$a\in A_{x}$, such that $s_{x,a}$ is an accepting state of
$M_{x,a}$. This together with the definition of $M$ implies that $w$
is accepted by $M_{x,a}$. Then by the definition of $M_{x,a}$, we
know that $w$ is an $L$-word representing an element in $\{g\in
\Gamma\mid\sigma_{q}(g, x)=a\}$. In particular, $w$ is in $L$.

Now suppose $w$ is in $L$. Use $M$ to read $w$. Suppose it ends at
the state $(s_{x,a})_{x\in X, a\in A_{x}}$. Note that $s_{x,a}$ is
an accepting state of $M_{x,a}$ if and only if we have
$\sigma_{q}(w,x)=a$, where $w$ is interpreted as the element of
$\Gamma$ it represents. Hence for each $x'\in X$, there is a unique
$a'\in A_{x'}$ such that $s_{x',a'}$ is an accepting state of
$M_{x',a'}$. Therefore $(s_{x,a})_{x\in X, a\in A_{x}}$ is in $T$.
So $w$ is accepted by $M$.
\end{proof}

\begin{definition}
Let $\bar s=(s_{x,a})\in T$. The {\em Future Predicting Automaton}
associated with $\bar s$, denoted by $M(\bar s)$, is the finite
state automaton having the same states, transition function and
initial state as the Future Predicting Automaton $M$, but $\bar s$
as the only accepting state.
\end{definition}

Let $L(\bar s)$ be the regular language over $X$ accepted by $M(\bar
s)$. The following fact is obvious from the definition and Lemma
\ref{FPA is L}.

\begin{lemma}\label{partition of L}
$\{L(\bar s)\mid \bar s\in T\}$ is a finite partition of $L$.
\end{lemma}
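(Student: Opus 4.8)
The plan is to derive the statement from the single fact that $M$ is a \emph{deterministic} finite state automaton. First I would dispose of finiteness: $T$ is a subset of the state set $S$, which is finite (as noted in the Remark following Definition \ref{FPA}), so $\{L(\bar s)\mid \bar s\in T\}$ is a finite collection of regular languages over $X$.

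Next I would check that the $L(\bar s)$ cover $L$. Since each $M_{x,a}$ is deterministic, the product transition function $F$ is deterministic, so for every $w\in X^{*}$ there is a well-defined state $\hat F(w)\in S$ reached from the initial state by reading $w$ (where $\hat F$ denotes the extension of $F$ to input words). By Lemma \ref{FPA is L}, $w\in L$ if and only if $w$ is accepted by $M$, i.e. if and only if $\hat F(w)\in T$. When this holds, set $\bar s=\hat F(w)$; since $M(\bar s)$ has the same initial state and transition function as $M$ and has $\bar s$ among its accepting states, $w$ is accepted by $M(\bar s)$, so $w\in L(\bar s)$. Conversely $L(\bar s)\subset L$ for every $\bar s\in T$, because $w\in L(\bar s)$ forces $\hat F(w)=\bar s\in T$, hence $w$ is accepted by $M$ and lies in $L$. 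Thus $L=\bigcup_{\bar s\in T}L(\bar s)$.

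Finally, disjointness is again immediate from determinism: if $w\in L(\bar s)\cap L(\bar s')$ then $\hat F(w)=\bar s$ and $\hat F(w)=\bar s'$, whence $\bar s=\bar s'$. So the sets $L(\bar s)$ are pairwise disjoint, and after discarding those $\bar s\in T$ for which $L(\bar s)=\emptyset$ (the unreachable accepting states) we obtain an honest partition of $L$. I do not anticipate any genuine difficulty; the only point that must be recorded carefully is that the product automaton $M$ inherits determinism from the automata $M_{x,a}$, since that is exactly what makes ``the state reached after reading $w$'' well defined and forces each $w\in L$ into precisely one block $L(\bar s)$.
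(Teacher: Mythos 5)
Your proof is correct and follows essentially the same route as the paper, which simply declares the statement ``obvious from the definition and Lemma \ref{FPA is L}'': determinism of the product automaton $M$ makes the state reached after reading $w$ unique, so each $w\in L$ lies in exactly one $L(\bar s)$, and finiteness of $T$ gives finiteness of the collection. Your explicit handling of empty blocks $L(\bar s)$ is a reasonable small refinement of what the paper leaves implicit.
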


For any $\bar s\in T$, let $M_{\bar s}$ be the finite state
automaton which has the same states, transition map and accepting
states as $M$, but has $\bar s$ as the initial state. Note that
$M_{\bar s}$ is different from $M(\bar s)$.

\begin{definition}\label{compatible}
A word $v\in X^{*}$ is {\em compatible} with $\bar s\in T$ if $v$ is
accepted by $M_{\bar s}$.
\end{definition}

Note that $v$ is compatible with $\bar s$ simply means that for any
$w\in L(\bar s)$, the word $wv$ is in $L$ (or equivalently accepted
by $M$). The following fact is clear from Definition
\ref{compatible}.

\begin{lemma}\label{regularity of compatible}
For any $\bar s\in T$, the language of all words compatible with
$\bar s$ is regular.
\end{lemma}

\begin{convention}
We interpret any $w\in X^{*}$ as the element of $\Gamma$ represented
by the word $w$ when any cocycle is applied to $w$.
\end{convention}

The next lemma is the key property of the Future Predicting
Automata.

\begin{proposition}\label{key property of FPA}
Suppose that $w_{1},w_{2}\in L(\bar s)$. Then
$\sigma_{q}(w_{1},v)=\sigma_{q}(w_{2},v)$ provided $v\in X^{*}$ is
compatible with $\bar s\in T$.
 \end{proposition}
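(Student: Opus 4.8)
The plan is to reduce the assertion for a general compatible word $v\in X^{*}$ to the per-letter information that is built into the construction of $M$, using the determinism of $M$ together with the cocycle identity for $\sigma_{q}$.

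First I would record two elementary facts. (i) The language $L$ is closed under passing to prefixes: a subpath of a $(\lambda,\nu)$-quasi-geodesic path in $K_{\Gamma}$ is again a $(\lambda,\nu)$-quasi-geodesic, by left-invariance of the word metric, so every prefix of a word in $L$ lies in $L$. (ii) For $u\in L$ and $x\in X$, the element $\sigma_{q}(u,x)$ is read off from the state of $M$ reached after scanning $u$: by the argument in the proof of Lemma~\ref{FPA is L}, if scanning $u$ lands $M$ in the state $(s_{x,a})_{x\in X,a\in A_{x}}\in T$, then for each $x$ the unique $a\in A_{x}$ for which $s_{x,a}$ is an accepting state of $M_{x,a}$ equals $\sigma_{q}(u,x)$.

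Now fix $w_{1},w_{2}\in L(\bar s)$ and write $v=x_{1}\cdots x_{k}$ with $x_{i}\in X$ (the case $k=0$ is immediate, since a cocycle takes a value on $(g,1)$ that is independent of $g$). As $v$ is compatible with $\bar s$, both $w_{1}v$ and $w_{2}v$ lie in $L$ by the remark following Definition~\ref{compatible}, so by (i) every prefix $w_{i}x_{1}\cdots x_{j}$ with $0\le j\le k$ lies in $L$. Since $M$ is deterministic and $w_{1},w_{2}$ both land in the state $\bar s$, the words $w_{1}x_{1}\cdots x_{j}$ and $w_{2}x_{1}\cdots x_{j}$ land in a common state of $M$ for every $j$; being in $L$ they are accepted by $M$ (Lemma~\ref{FPA is L}), so this common state lies in $T$, and (ii) gives
$$\sigma_{q}(w_{1}x_{1}\cdots x_{j},x)=\sigma_{q}(w_{2}x_{1}\cdots x_{j},x)\qquad\text{for all }x\in X,\ 0\le j\le k.$$

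Finally I would iterate the cocycle identity $\sigma_{q}(g,h_{1}h_{2})=\sigma_{q}(g,h_{1})+\sigma_{q}(gh_{1},h_{2})-\sigma_{q}(h_{1},h_{2})$ to get
$$\sigma_{q}(w_{i},v)=\sum_{j=1}^{k}\sigma_{q}(w_{i}x_{1}\cdots x_{j-1},x_{j})-\sum_{j=2}^{k}\sigma_{q}(x_{1}\cdots x_{j-1},x_{j}).$$
The second sum is independent of $i$, and each summand of the first sum is independent of $i$ by the displayed equality above (with $x=x_{j}$ and $j$ replaced by $j-1$), so subtracting the two expansions for $i=1$ and $i=2$ yields $\sigma_{q}(w_{1},v)=\sigma_{q}(w_{2},v)$. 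The step I expect to be the real content is the combination of the prefix-closedness of $L$ with the matching of the intermediate states of $M$: this is exactly what lets fact (ii) be applied at each intermediate word $w_{i}x_{1}\cdots x_{j-1}$, not merely at $w_{1}$ and $w_{2}$ themselves; given that, the remainder is bookkeeping with the cocycle identity.
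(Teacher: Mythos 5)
Your proposal is correct and follows essentially the same route as the paper: prefix-closedness of $L$, determinism of $M$ together with the fact that the accepting components of the reached state determine $\sigma_{q}(\cdot,x)$, and the cocycle identity applied letter by letter. The only difference is presentational — the paper runs an induction on the length of $v$, while you unroll that induction into an explicit telescoping sum.
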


\begin{proof}
Let $v=x_{1}x_{2}\cdot\cdot\cdot x_{l}$.  We argue by induction on
$l$.

Suppose $l=1$. Since $w_{1},w_{2}\in L(\bar s)$, they both end at
the same state of $M(\bar s)$. Hence for $a\in A_{x_{1}}$, we have
that $w_{1}$ and $w_{2}$, when read by $M_{x_{1}, a}$, end up in the
same states. By the definition of FPA there exists a unique
$a_{1}\in A_{x_{1}}$ such that $M_{x_{1}, a_{1}}$ accepts both
$w_{1}$ and $w_{2}$. Therefore by the definition of $M_{x_{1},
a_{1}}$, we have $\sigma_{q}(w_{1}, x_{1})=\sigma_{q}(w_{2},
x_{1})=a_{1}$.

By the cocycle condition of $\sigma_{q}$, for $i=1,2$ we have
\begin{multline*}
\sigma_{q}(w_{i}, x_{1}x_{2}\cdot\cdot\cdot x_{l})=\sigma_{q}(w_{i},
x_{1}x_{2}\cdot\cdot\cdot
x_{l-1})+\sigma_{q}(w_{i}x_{1}x_{2}\cdot\cdot\cdot x_{l-1},
x_{l})-\sigma_{q}(x_{1}x_{2}\cdot\cdot\cdot x_{l-1}, x_{l})
\end{multline*}
By induction the first term does not depend on $i$. The third term
clearly does not depend on $i$. Showing that the second term does
not depend on $i$ will complete the proof.

Since $\bar s$ and $v$ are compatible, we have that
$w_{i}v=w_{i}x_{1}x_{2}\cdot\cdot\cdot x_{l}$ is in $L$. Therefore
we know that $w_{i}x_{1}x_{2}\cdot\cdot\cdot x_{l-1}$ is in $L$ for
$i=1,2$ since $L$ is closed under taking subword. In fact both
$w_{1}x_{1}x_{2}\cdot\cdot\cdot x_{l-1}$ and
$w_{2}x_{1}x_{2}\cdot\cdot\cdot x_{l-1}$ end up in the same
accepting state when read by $M$ since that is true for $w_{1}$ and
$w_{2}$. So there exists a unique $a_{l}\in A_{x_{l}}$ such that
$M_{x_{l}, a_{l}}$ accepts $w_{i}x_{1}x_{2}\cdot\cdot\cdot x_{l-1}$
for $i=1,2$. Hence $\sigma_{q}(w_{i}x_{1}x_{2}\cdot\cdot\cdot
x_{l-1}, x_{l})=a_{l}$, which does not depend on $i$.
\end{proof}

The above lemma explains the name ``Future Predicting Automaton"
since all we need to know to ``predict" the value of $\sigma_{q}(w,
v)$ is where $w$ ends when read by the Future Predicting Automaton.
By Proposition \ref{key property of FPA} the following definition
makes sense.

\begin{definition}
Let $\bar s\in T$ and $v\in X^{*}$. Suppose $\bar s$ and $v$ are
compatible. Define $\sigma_{q}(\bar s, v)$ to be $\sigma_{q}(w, v)$
for any $w\in L(\bar s)$.
\end{definition}

\subsection{Parity Predicting Automata}
Recall that $q:\Gamma\rightarrow E'$ is the symmetric section
(Definition \ref{symmetric section}). We are lifting solutions of
equation system in $\Gamma$ by $q$. We need to define appropriate
rational constraints on variables of the Tripod Equation systems so
that we can predict whether their lifts land in $E$ or not. The
Parity Predicting Automata define these rational constraints.

We first define the Left Future Predicting Automaton(LFPA) and the
Right Future Predicting Automaton(RFPA), which we will use to define
the Parity Predicting Automaton.

Let $\rho$ be the section given by Theorem \ref{regularity of rho}.
Hence we know that $\{g\in \Gamma\mid\sigma_{\rho}(g, x)=a\}$ is
$L$-rational for all $x\in X$ and $a\in A$ and
$A^{1}_{x}=\{\sigma_{\rho}(g, x)\mid g\in \Gamma\}$ is finite.

For each $x\in X$ and $a\in A^{1}_{x}$ choose and fix a finite state
automaton $M^{1}_{x, a}$ which accepts exactly the $L$-words
representing elements in $\{g\in\Gamma\mid\sigma_{\rho}(g,x)=a\}$.
Denote by $S^{1}_{x,a}$ and $F^{1}_{x,a}$ the set of states and the
transition function of $M^{1}_{x,a}$, respectively.

The following definition is almost the same as the definition of the
Future Predicting Automaton $M$, except that we are now considering
the section $\rho$ instead of $q$.

\begin{definition}
The {\em Left Future Predicting Automaton} $M_{1}$ over $X$ is
defined as follows:

{\bf States}: $S_{1}=\prod_{x\in X, a\in A^{1}_{x}} S^{1}_{x,a}$

{\bf Transition function}: $F_{1}:S_{1}\times X\rightarrow S_{1}$ is
defined by
\begin{equation*}
F_{1}\big((s^{1}_{x,a})_{x\in X, a\in A^{1}_{x}},
x'\big)=\big(F^{1}_{x,a}(s^{1}_{x,a}, x')\big)_{x\in X, a\in
A^{1}_{x}}.
\end{equation*}.

{\bf Initial state}: $(I^{1}_{x,a})_{x\in X, a\in A^{1}_{x}}$, where
$I^{1}_{x,a}$ is the initial states of $M^{1}_{x,a}$.

{\bf Accepting states}: The set of accepting states $T_{1}$ consists
of states $(s^{1}_{x,a})_{x\in X, a\in A^{1}_{x}}$ satisfying that
for all $x'\in X$ there exists a unique $a'\in A^{1}_{x'}$ such that
$s^{1}_{x',a'}$ is the accepting state of $M^{1}_{x',a'}$.
\end{definition}

One can prove the following lemma the same way as we prove Lemma
\ref{FPA is L}
\begin{lemma}
The language accepted by $M_{1}$ is $L$.
\end{lemma}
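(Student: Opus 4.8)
The plan is to mimic exactly the argument used for Lemma~\ref{FPA is L}, simply substituting the section $\rho$ for the symmetric section $q$ and the automata $M^{1}_{x,a}$ for the automata $M_{x,a}$. Concretely, I would argue the two inclusions separately.

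First I would show that every word accepted by $M_{1}$ lies in $L$. Suppose $w$ is accepted by $M_{1}$, so that reading $w$ lands in a state $\bar s^{1}=(s^{1}_{x,a})_{x\in X, a\in A^{1}_{x}}\in T_{1}$. By the definition of $T_{1}$, for each $x\in X$ there is a (unique) $a\in A^{1}_{x}$ with $s^{1}_{x,a}$ an accepting state of $M^{1}_{x,a}$; by the product structure of the transition function $F_{1}$, the $(x,a)$-coordinate of the state reached after reading $w$ is precisely the state $M^{1}_{x,a}$ reaches on input $w$, so $w$ is accepted by $M^{1}_{x,a}$. Since $M^{1}_{x,a}$ accepts only $L$-words (those representing elements of $\{g\in\Gamma\mid\sigma_{\rho}(g,x)=a\}$), in particular $w\in L$.

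Conversely I would show every $w\in L$ is accepted by $M_{1}$. Read $w$ with $M_{1}$ and let $(s^{1}_{x,a})$ be the state reached. For each fixed $x\in X$, the $(x,a)$-coordinate $s^{1}_{x,a}$ is the state $M^{1}_{x,a}$ reaches on $w$, and this is an accepting state of $M^{1}_{x,a}$ if and only if $w\in L$ and $\sigma_{\rho}(w,x)=a$ (interpreting $w$ as the element of $\Gamma$ it represents). Since $w\in L$ and $\sigma_{\rho}(w,x)$ is a single well-defined element of $A^{1}_{x}$, there is exactly one $a\in A^{1}_{x}$ — namely $a=\sigma_{\rho}(w,x)$ — for which $s^{1}_{x,a}$ is accepting. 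As $x$ was arbitrary, the reached state satisfies the defining condition of $T_{1}$, so $w$ is accepted by $M_{1}$.

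There is no real obstacle here; the only thing worth double-checking is that the defining data of $M_{1}$ (the $L$-rationality of $\{g\in\Gamma\mid\sigma_{\rho}(g,x)=a\}$ and the finiteness of $A^{1}_{x}$) are genuinely available for $\rho$, which they are by Theorem~\ref{regularity of rho}, so that each $M^{1}_{x,a}$ exists and $S_{1}$ is finite. Given that, the argument is a line-by-line transcription of the proof of Lemma~\ref{FPA is L}, and I would simply write ``The proof is identical to that of Lemma~\ref{FPA is L}, replacing $q$ by $\rho$, $M_{x,a}$ by $M^{1}_{x,a}$, $A_x$ by $A^1_x$, and $T$ by $T_1$.''
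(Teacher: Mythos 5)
Your proposal is correct and is exactly what the paper intends: the paper itself only says the lemma "can be proved the same way as Lemma \ref{FPA is L}," and your two-inclusion argument (using the product structure of $F_{1}$, the definition of $T_{1}$, and the fact that each $M^{1}_{x,a}$ accepts precisely the $L$-words with $\sigma_{\rho}(w,x)=a$) is a faithful transcription of that proof with $q$, $M_{x,a}$, $A_x$, $T$ replaced by $\rho$, $M^{1}_{x,a}$, $A^{1}_x$, $T_{1}$. Your remark that Theorem \ref{regularity of rho} supplies the needed $L$-rationality and finiteness of $A^{1}_{x}$ is also the right justification that the construction makes sense.
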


\begin{definition}
Suppose $\bar s_{1}=(s^{1}_{x,a})_{x\in X, a\in A^{1}_{x}}$ is an
accepting state of $M_{1}$. For any $x'\in X$, define
$\sigma_{\rho}(\bar s_{1}, x')=a'$, where $a'$ is the unique element
of $A^{1}_{x'}$ such that $s^{1}_{x',a'}$ is the accepting state of
$M^{1}_{x',a'}$.
\end{definition}

The next lemma follows directly from the above definition and the
definition of $M^{1}_{x,a}$.

\begin{lemma}\label{property of M_1}
Suppose $w\in X^{*}$ ends up in the state $\bar s_{1}\in T_{1}$ when
read by $M_{1}$. Then $\sigma_{\rho}(w, x)=\sigma_{\rho}(\bar
s_{1},x)$
\end{lemma}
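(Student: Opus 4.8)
The plan is to unwind the definitions and observe that the statement is essentially a tautology, carried over from the construction of the Left Future Predicting Automaton $M_{1}$ out of the component automata $M^{1}_{x,a}$. First I would fix $w\in X^{*}$ and run it through $M_{1}$, so that by definition of the transition function $F_{1}$ the resulting state is the tuple $(s^{1}_{x,a})_{x\in X,\,a\in A^{1}_{x}}$ where each coordinate $s^{1}_{x,a}$ is precisely the state reached when $w$ is read by the component automaton $M^{1}_{x,a}$. Calling this tuple $\bar s_{1}$, the hypothesis that $\bar s_{1}\in T_{1}$ says, by the definition of the accepting set $T_{1}$, that for the particular letter $x\in X$ there is a unique $a'\in A^{1}_{x}$ with $s^{1}_{x,a'}$ an accepting state of $M^{1}_{x,a'}$; and this $a'$ is by definition $\sigma_{\rho}(\bar s_{1},x)$.

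Next I would use the defining property of the component automaton $M^{1}_{x,a'}$: it accepts exactly the $L$-words representing elements of $\{g\in\Gamma\mid\sigma_{\rho}(g,x)=a'\}$. Since $w$ (read as an element of $\Gamma$ via the convention that words are interpreted as the elements they represent) ends in an accepting state of $M^{1}_{x,a'}$, we conclude $\sigma_{\rho}(w,x)=a'=\sigma_{\rho}(\bar s_{1},x)$, which is exactly the claim. One small point worth noting is that $w$ being read by $M_{1}$ reaching an accepting state forces $w$ to lie in $L$ (since the language of $M_{1}$ is $L$), so $w$ is genuinely one of the words that $M^{1}_{x,a'}$ is designed to classify, and uniqueness of $a'$ guarantees there is no ambiguity in the value.

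There is no real obstacle here: the lemma is a bookkeeping consequence of Definition of $M_{1}$, Definition of $\sigma_{\rho}(\bar s_{1},x')$, and the construction of the $M^{1}_{x,a}$; the only thing to be careful about is keeping straight that the product automaton $M_{1}$ simply runs all the $M^{1}_{x,a}$ in parallel, so that the coordinate of the terminal state indexed by $(x,a)$ records exactly the run of $M^{1}_{x,a}$ on $w$. This is the analogue, for the section $\rho$, of Lemma \ref{FPA is L} together with the discussion preceding the definition of $\sigma_{q}(\bar s,v)$, and the argument is verbatim the same with $q$ replaced by $\rho$.
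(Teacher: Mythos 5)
Your proposal is correct and matches the paper's intent: the paper gives no explicit argument, stating only that the lemma ``follows directly from the above definition and the definition of $M^{1}_{x,a}$,'' which is precisely the definition-unwinding you carry out (the product automaton $M_{1}$ runs the $M^{1}_{x,a}$ in parallel, the accepting condition singles out the unique $a'$ defining $\sigma_{\rho}(\bar s_{1},x)$, and membership of $w$ in the language of $M^{1}_{x,a'}$ forces $\sigma_{\rho}(w,x)=a'$). Your added remark that reaching a state of $T_{1}$ forces $w\in L$ is a correct and harmless bookkeeping point.
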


By Lemma \ref{right regularity } the set of all the $L$-words
representing elements in $\{g\in \Gamma\mid\sigma_{\rho}(x, g)=a\}$
is a regular language. Hence its reverse is also a regular language.
Let $M^{2}_{x,a}$ be the finite state automaton accepting the
reverse language. Let $S^{2}_{x,a}$ and $F^{2}_{x,a}$ be the set of
states and the transition function of $M^{2}_{x,a}$, respectively.
Let $A^{2}_{x}=\{\sigma_{\rho}(x, g)\mid g\in \Gamma\}$. Note that
$A^{2}_{x}$ is finite by Theorem \ref{regularity of rho}.

\begin{definition}
The {\em Right Future Predicting Automaton} $M_{2}$ over $X$ is
defined as follows:

{\bf States}: $S_{2}=\prod_{x\in X, a\in A^{2}_{x}} S^{2}_{x,a}$.

{\bf Transition function}: $F_{2}:S_{2}\times X\rightarrow S_{2}$ is
defined as follow:
\begin{equation*}
F_{2}\big((s^{2}_{x,a})_{x\in X, a\in A^{2}_{x}},
x'\big)=\big(F^{2}_{x,a}(s^{2}_{x,a}, (x')^{-1})\big)_{x\in X, a\in
A^{2}_{x}}.
\end{equation*}

{\bf Initial state}: $(I^{2}_{x,a})_{x\in X, a\in A^{2}_{x}}$, where
$I^{2}_{x,a}$ is the initial states of $M^{2}_{x,a}$.

{\bf Accepting states}: The set of accepting states $T_{2}$ consists
of states $(s^{2}_{x,a})_{x\in X, a\in A^{2}_{x}}$ satisfying that
for all $x'\in X$ there exists a unique $a'\in A^{2}_{x'}$ such that
$s^{2}_{x',a'}$ is the accepting state of $M^{2}_{x',a'}$.
\end{definition}

One can prove the following lemma the similar way as we prove Lemma
\ref{FPA is L}

\begin{lemma}
The language accepted by $M_{2}$ is
\begin{equation*}
L^{-1}=\{w^{-1}=w_{n}^{-1}\cdots w_{1}^{-1}\mid w=w_{1}\cdots
w_{n}\in L\}.
\end{equation*}
\end{lemma}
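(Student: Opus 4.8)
The plan is to follow the proof of Lemma~\ref{FPA is L} almost verbatim; the only genuinely new ingredient is bookkeeping of how a single forward pass of $M_{2}$ over a word over $X$ amounts to reading that word's formal inverse through the component automata $M^{2}_{x,a}$.

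First I would fix notation: for $x\in X$ and $a\in A^{2}_{x}$ set $K_{x,a}=\{w\in L\mid \sigma_{\rho}(x,w)=a\}$, so that by construction $M^{2}_{x,a}$ accepts the letter-order reversal $\mathrm{rev}(K_{x,a})$. Next I would record the basic observation about the product structure: since the transition function of $M_{2}$ feeds $(x')^{-1}$ to every factor when it reads $x'$, a forward pass of $M_{2}$ over $u=u_{1}u_{2}\cdots u_{m}$ leaves each component $M^{2}_{x,a}$ in exactly the state it reaches on the word $u_{1}^{-1}u_{2}^{-1}\cdots u_{m}^{-1}$; that state is accepting iff $u_{1}^{-1}\cdots u_{m}^{-1}\in\mathrm{rev}(K_{x,a})$, i.e. iff $\mathrm{rev}(u_{1}^{-1}\cdots u_{m}^{-1})=u_{m}^{-1}\cdots u_{1}^{-1}=u^{-1}$ lies in $K_{x,a}$. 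Using $\pi(u^{-1})=\pi(u)^{-1}$, this says: after $M_{2}$ reads $u$, the component $M^{2}_{x,a}$ is in an accepting state iff $u^{-1}\in L$ and $\sigma_{\rho}(x,\pi(u)^{-1})=a$. I would then plug this into the acceptance rule of $M_{2}$: $u$ is accepted iff for each $x'\in X$ there is a unique $a'\in A^{2}_{x'}$ with $s^{2}_{x',a'}$ accepting. If $u^{-1}\in L$, then for each $x'$ the value $a':=\sigma_{\rho}(x',\pi(u)^{-1})$ lies in $A^{2}_{x'}$ and is the unique index making the component accepting, so $u$ is accepted; if $u^{-1}\notin L$, then $u^{-1}$ belongs to no $K_{x',a'}$ (these are subsets of $L$), so no admissible $a'$ exists and $u$ is rejected. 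Hence $M_{2}$ accepts exactly $\{u\in X^{*}\mid u^{-1}\in L\}=L^{-1}$.

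The routine parts are the verification that a pass through the product automaton is the componentwise product of the passes through the $M^{2}_{x,a}$ and the elementary check that $\mathrm{rev}$ and letter-inversion compose as stated. The one place that needs care — and the only real obstacle, though a mild one — is precisely this interaction: the reversal is already baked into each $M^{2}_{x,a}$ while the letter-inversion is baked into the transition function of $M_{2}$, and these two operations combine (rather than cancel) so that forwards reading of $u$ in $M_{2}$ is a test of membership of $u^{-1}$, not of $u$ or of $\mathrm{rev}(u)$. Once this is pinned down, the argument is formally identical to that of Lemma~\ref{FPA is L}.
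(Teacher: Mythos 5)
Your argument is correct and follows exactly the route the paper intends: the paper gives no written proof, stating only that one argues ``the similar way'' as for Lemma~\ref{FPA is L}, and your write-up supplies precisely that adaptation. In particular, your bookkeeping of how the letter-inversion in $F_{2}$ and the reversal baked into each $M^{2}_{x,a}$ combine so that reading $u$ forwards tests membership of the formal inverse $u^{-1}$ in $L$ (giving the accepted language $\{u\mid u^{-1}\in L\}=L^{-1}$) is the one point where care is needed, and you handle it correctly.
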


\begin{definition}
Suppose $\bar s_{2}=(s^{2}_{x,a})_{x\in X, a\in A^{2}_{x}}$ is an
accepting state of $M_{2}$. Define $\sigma_{\rho}(x',\bar
s_{2})=a'$, where $a'$ is the unique element of $A^{1}_{x'}$ such
that $s^{1}_{x',a'}$ is the accepting state of $M^{1}_{x',a'}$.
\end{definition}

The next lemma follows directly from the above definition and the
definition of $M^{2}_{x,a}$.

\begin{lemma}\label{property of M_2}
Suppose $w\in X^{*}$ ends up in the state $\bar s_{2}\in T_{2}$ when
read by $M_{2}$. Then $\sigma_{\rho}(x, w^{-1})=\sigma_{\rho}(x,
\bar s_{2})$
\end{lemma}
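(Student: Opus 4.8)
The plan is to unwind the definition of the Right Future Predicting Automaton $M_{2}$ and of its component automata $M^{2}_{x,a}$, in direct parallel with the proof of Lemma~\ref{property of M_1}; the only extra bookkeeping is the word-reversal built into $M_{2}$.

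The first step is to identify, for a word $w=w_{1}\cdots w_{n}$ with $w_{i}\in X$, which word each component automaton actually processes. By the definition of the transition function $F_{2}$, when $M_{2}$ reads the letter $w_{i}$ every component $M^{2}_{x,a}$ is fed the letter $w_{i}^{-1}$; hence after $M_{2}$ has consumed all of $w$, the component $M^{2}_{x,a}$ has read the word $r=w_{1}^{-1}w_{2}^{-1}\cdots w_{n}^{-1}$, whose reversal is $w_{n}^{-1}\cdots w_{1}^{-1}$, that is, the inverse word $w^{-1}$ (representing $\pi(w)^{-1}$ in $\Gamma$). Since $M^{2}_{x,a}$ was chosen to accept exactly the reverses of the $L$-words representing elements of $\{g\in\Gamma\mid\sigma_{\rho}(x,g)=a\}$, the automaton $M^{2}_{x,a}$ accepts the input $r$ if and only if $w^{-1}\in L$ and $\sigma_{\rho}(x,\pi(w^{-1}))=a$.

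Then I would invoke the hypothesis. Since $M_{2}$ is deterministic and $\bar s_{2}=(s^{2}_{y,b})_{y\in X,\,b\in A^{2}_{y}}\in T_{2}$, the assumption that $w$ ends at $\bar s_{2}$ produces, for the given $x\in X$, the unique $a'\in A^{2}_{x}$ for which $s^{2}_{x,a'}$ is the accepting state of $M^{2}_{x,a'}$; by definition $\sigma_{\rho}(x,\bar s_{2})=a'$. But $s^{2}_{x,a'}$ is precisely the state of the $(x,a')$-component of $M_{2}$ after reading $w$, so the component $M^{2}_{x,a'}$ has ended in its accepting state on input $r$; by the previous step this forces $w^{-1}\in L$ and $\sigma_{\rho}(x,\pi(w^{-1}))=a'$. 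Under the convention that reads a word as the group element it represents when a cocycle is applied, this says exactly $\sigma_{\rho}(x,w^{-1})=a'=\sigma_{\rho}(x,\bar s_{2})$, the desired identity.

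The one place that needs genuine care — and the step I expect to take longest to get right — is the reversal bookkeeping in the first step: one must check that feeding $w_{1}^{-1},\dots,w_{n}^{-1}$ to the component automata in that order produces a word whose reversal is the inverse word $w^{-1}$, so that ``$M^{2}_{x,a}$ accepts'' translates back into a statement about $\sigma_{\rho}(x,-)$ evaluated at $\pi(w)^{-1}$, rather than about $\sigma_{\rho}(-,x)$ or about a product formed in the wrong order. Everything else is a transcription of the relevant definitions, exactly as in the proof of Lemma~\ref{property of M_1}.
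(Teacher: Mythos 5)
Your proposal is correct and is exactly the argument the paper intends: the paper gives no written proof (it says the lemma ``follows directly from the above definition and the definition of $M^{2}_{x,a}$''), and your unwinding --- each component of $M_{2}$ reads $w_{1}^{-1}\cdots w_{n}^{-1}$, whose reversal is the inverse word $w^{-1}$, so an accepting component $(x,a')$ certifies $w^{-1}\in L$ and $\sigma_{\rho}(x,w^{-1})=a'=\sigma_{\rho}(x,\bar s_{2})$ --- is precisely that direct verification, with the reversal bookkeeping handled correctly. You also implicitly (and rightly) read the paper's definition of $\sigma_{\rho}(x',\bar s_{2})$ with $A^{2}_{x'}$, $s^{2}_{x',a'}$, $M^{2}_{x',a'}$ in place of the superscript-$1$ typos.
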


We now define a homomorphism $Pa$, which makes what we mean by
``parity'' precise. Denote the natural project from $\mathbb{Z}$ to
$\mathbb{Z}_{2}$ by $P_{2}$. Recall that the finite generated
abelian group $A$ central in $E$ is identified with
$\mathbb{Z}^{n}\oplus\mathbb{Z}_{d_{1}}\oplus\cdots\oplus\mathbb{Z}_{d_{m}}$.
The homomorphism $Pa: A \rightarrow
\mathbb{Z}_{2}^{n}\oplus\mathbb{Z}_{d_{1}}\oplus\cdots\oplus\mathbb{Z}_{d_{m}}
$ is defined by
\begin{equation*}
Pa(a_{1},\cdots, a_{n}, b_{1},\cdots,b_{m})=(P_{2}(a_{1}),\cdots,
P_{2}(a_{n}), b_{1},\cdots,b_{m}).
\end{equation*}
The {\em parity} of an element $a\in A$ is define to be $Pa(a)$.

Let $a\in A$. Recall that $\iota_{3}$ and $\iota_{1}$ are defined in
Convention \ref{iota 3} and the paragraph before it, respectively.
In general $\iota_{3}(a)$ does not lie in $\iota_{1}(A)$. But once
we know the parity of $a$. We can use it to ``move'' $\iota_{3}(a)$
to something in $\iota_{1}(A)$. We now make this precise: Define the
map (not a homomorphism) $\iota_{4}:
\mathbb{Z}_{2}^{n}\oplus\mathbb{Z}_{d_{1}}\oplus\cdots\oplus\mathbb{Z}_{d_{m}}
\rightarrow
A'=\mathbb{Z}^{n}\oplus\mathbb{Z}_{2d_{1}}\oplus\cdots\oplus\mathbb{Z}_{2d_{m}}
$ by
\begin{equation*}
(a_{1},\cdots, a_{n}, b_{1},\cdots,b_{m})=(a_{1},\cdots,
a_{n}, b_{1},\cdots,b_{m})
\end{equation*}
Then the following fact is clear.

\begin{lemma}\label{fixing the one half}
For any $a\in A$, we have $\iota_{3}(a)+\iota_{4}(Pa(a))\in \iota_{1}(A)$.
\end{lemma}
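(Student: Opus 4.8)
The plan is to unwind the definitions of $\iota_1$, $\iota_3$, $\iota_4$ and $Pa$ coordinate by coordinate, since all four maps respect the direct sum decomposition $A = \mathbb{Z}^n \oplus \mathbb{Z}_{d_1} \oplus \cdots \oplus \mathbb{Z}_{d_m}$ and its parallels for $A' = \mathbb{Z}^n \oplus \mathbb{Z}_{2d_1} \oplus \cdots \oplus \mathbb{Z}_{2d_m}$ and for the target of $Pa$, namely $\mathbb{Z}_2^n \oplus \mathbb{Z}_{d_1} \oplus \cdots \oplus \mathbb{Z}_{d_m}$. So I would fix $a = (a_1,\dots,a_n,b_1,\dots,b_m) \in A$ and check the two blocks of coordinates separately.

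First, the torsion block. In coordinates $b_j \in \mathbb{Z}_{d_j}$, the map $\iota_3$ sends $b_j$ to $b_j$ viewed in $\mathbb{Z}_{2d_j}$, i.e. to a lift; $Pa$ fixes $b_j$; and $\iota_4$ sends $b_j$ to $b_j$ in $\mathbb{Z}_{2d_j}$. Their sum in $\mathbb{Z}_{2d_j}$ is $2b_j$ (interpreting $b_j$ as an integer in $\{0,\dots,d_j-1\}$), which is exactly $\iota_1(b_j)$, the image of the $j$-th torsion coordinate of $\iota_1$. Strictly one should note that the value $\iota_3(b_j) + \iota_4(Pa(b_j))$ in $\mathbb{Z}_{2d_j}$ does not depend on the chosen integer lift of $b_j$ modulo $d_j$, since changing the lift by $d_j$ changes each of $\iota_3(b_j)$ and $\iota_4(Pa(b_j))$ by $d_j$, hence the sum by $2d_j \equiv 0$; so the sum is well-defined and equals $2b_j = \iota_1(b_j)$.

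Second, the free block. In coordinates $a_i \in \mathbb{Z}$, we have $\iota_3(a_i) = a_i$, $Pa(a_i) = P_2(a_i) \in \mathbb{Z}_2$, and $\iota_4(P_2(a_i)) \in \mathbb{Z}$ is $0$ if $a_i$ is even and $1$ if $a_i$ is odd (a fixed integer lift, which is the content of the definition of $\iota_4$ on the $\mathbb{Z}_2$-summand). Hence $\iota_3(a_i) + \iota_4(Pa(a_i)) = a_i + (a_i \bmod 2)$, which is an even integer, say $2k_i$; and then $2k_i = \iota_1(k_i)$ lies in the image of $\iota_1$ restricted to the $i$-th free coordinate. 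Assembling the two blocks, $\iota_3(a) + \iota_4(Pa(a)) = \iota_1(a')$ where $a' = (k_1,\dots,k_n,b_1,\dots,b_m) \in A$, which proves $\iota_3(a) + \iota_4(Pa(a)) \in \iota_1(A)$.

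There is essentially no obstacle here — the statement is a bookkeeping identity — but the one point that deserves a line of care is checking well-definedness in the torsion coordinates, i.e. that $\iota_3 + \iota_4 \circ Pa$ is independent of the integer representatives chosen to define the (non-homomorphism) lifts $\iota_3$ and $\iota_4$, as indicated above; once that is observed, the coordinatewise computation closes the proof.
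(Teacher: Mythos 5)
Your coordinatewise verification is correct: on the free part $a_i+\iota_4(P_2(a_i))=a_i+(a_i\bmod 2)$ is even, and on each torsion part the two copies of the representative of $b_j$ sum to $2b_j$ in $\mathbb{Z}_{2d_j}$, which is exactly the image under $\iota_1$; your remark about independence of the chosen integer lifts is the right point of care, since $\iota_3$ and $\iota_4$ are only set maps. The paper states this lemma without proof (declaring it clear), and your argument is precisely the routine bookkeeping that justifies it, so there is nothing to compare beyond that.
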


Now we define the Parity Predicting Automaton.

\begin{definition}[PPA]
The {\em Parity Predicting Automaton}, denoted by $D$, is defined as
follows:

{\bf States}: $S_{D}=S_{1}\times S_{2}\times
(\mathbb{Z}_{2}^{n}\oplus\mathbb{Z}_{d_{1}}\oplus\cdots\oplus\mathbb{Z}_{d_{m}}
)\cup\{\emptyset\}$.

{\bf Transition functions}: $F_{D}:S_{D}\times X\rightarrow S_{D}$
is defined as follows:

If $\bar s_{1}\in T_{1}$ and $\bar s_{2}\in T_{2}$, then we define
\begin{eqnarray*}
&&F_{D}\Big((\bar s_{1} , \bar s_{2},  b), x\Big)
\\&=&\Big(F_{1}(\bar s_{1}, x), F_{2}(\bar s_{2},
x),  b'\Big),
\end{eqnarray*}
where
\begin{eqnarray*}
b'=b+Pa\Big(\sigma_{\rho}\big(x,
x^{-1}\big)-\sigma_{\rho}\big(\bar s_{1},
x\big)-\sigma_{\rho}\big(x^{-1}, \bar s_{2}\big)\Big).
\end{eqnarray*}

Otherwise we define
\begin{eqnarray*}
F_{D}\Big(\big(\bar s_{1}, \bar s_{2},  b\big),
x\Big)=\emptyset.
\end{eqnarray*}
For all $x\in X$, we define
\begin{equation*}
F_{D}(\emptyset, x)=\emptyset.
\end{equation*}

{\bf The initial state}: $(I_{1}, I_{2}, 0)$. Here $I_{1}$ and
$I_{2}$ are the initial states of $M_{1}$ and $M_{2}$ respectively.

{\bf The accepting states}: All states $(\bar s_{1},\bar s_{2},
b)$ so that $\bar s_{1}\in T_{1}$ and $\bar s_{2}\in T_{2}$.
\end{definition}

The following fact is clear from the above definition.

\begin{lemma}
The language accepted by $D$ is $L$.
\end{lemma}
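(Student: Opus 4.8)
The plan is to show that the automaton $D$ accepts exactly $L$ by arguing that (a) the ``coordinate'' components $\bar s_1$ and $\bar s_2$ faithfully track the behaviour of $M_1$ and $M_2$, which are already known to accept $L$ and $L^{-1}$ respectively, and (b) the extra bookkeeping component $b$ never causes a word to be rejected, because the transition map only collapses to $\emptyset$ exactly when $M_1$ or $M_2$ would have left their accepting sets. First I would observe that the first two coordinates of $F_D$ are literally $F_1$ and $F_2$ applied to the respective coordinates, so by induction on the length of an input word $w = x_1\cdots x_k$, reading $w$ through $D$ from the initial state $(I_1, I_2, 0)$ lands in a state of the form $(\bar t_1, \bar t_2, b)$ with $\bar t_1 = F_1^*(I_1, w)$ and $\bar t_2 = F_2^*(I_2, w)$, \emph{provided} the computation never hits $\emptyset$. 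Here one must be slightly careful: $F_2$ feeds $(x')^{-1}$ into $F^2_{x,a}$, matching the fact that $M_2$ reads the formal inverse letter-by-letter; but $M_1$ and $M_2$ are ordinary (forward-reading) automata whose transition functions are total, so $F_1^*(I_1,w)$ and $F_2^*(I_2,w)$ are always defined regardless of $w$.

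The key point is then that the middle coordinate component $b$ is also always defined: the formula
\begin{equation*}
b' = b + Pa\Big(\sigma_{\rho}(x, x^{-1}) - \sigma_{\rho}(\bar s_1, x) - \sigma_{\rho}(x^{-1}, \bar s_2)\Big)
\end{equation*}
makes sense precisely when $\bar s_1 \in T_1$ and $\bar s_2 \in T_2$, since $\sigma_\rho(\bar s_1, x)$ and $\sigma_\rho(x^{-1},\bar s_2)$ are only defined for accepting states. So the transition sends a triple to $\emptyset$ exactly when one of the first two coordinates is currently a non-accepting state of $M_1$ or $M_2$. I would therefore split into two cases. If $w \in L$, then every prefix of $w$ is in $L$ (as $L$ is closed under taking subwords, being the language of $(\lambda,\nu)$-quasi-geodesics), so every prefix lands $M_1$ in an accepting state $\bar s_1 \in T_1$; likewise every prefix of $w$, read backwards, is a prefix-of-reverse of $w$, hence lies in $L^{-1}$, so $M_2$ stays in $T_2$ throughout. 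Thus $D$ never hits $\emptyset$ while reading $w$, and it ends in $(\bar t_1, \bar t_2, b)$ with $\bar t_1 = F_1^*(I_1,w) \in T_1$ and $\bar t_2 = F_2^*(I_2,w) \in T_2$; by the definition of the accepting states of $D$, this state is accepting. Conversely, if $w \notin L$, then either $F_1^*(I_1,w) \notin T_1$ or (by the $L^{-1}$ characterization) $F_2^*(I_2,w) \notin T_2$; in either case the final state of $D$ is either $\emptyset$ or a triple whose first or second coordinate is non-accepting, so it is not an accepting state of $D$. Hence $D$ accepts $L$.

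I expect the only genuine subtlety — and the step I would treat most carefully — to be the bookkeeping about \emph{when} the middle-coordinate update is legitimate, i.e. verifying that along a word in $L$ all intermediate states keep both the $M_1$-coordinate in $T_1$ and the $M_2$-coordinate in $T_2$, so that $\sigma_\rho(\bar s_1, x)$ and $\sigma_\rho(x^{-1},\bar s_2)$ are defined at every step and the $\emptyset$ branch is never taken. This hinges on two facts already established: that $M_1$ accepts $L$ and $M_2$ accepts $L^{-1}$, and that $L$ is closed under passing to subwords (so in particular to prefixes, and to prefixes of the reversed word). Everything else — that the first two coordinates evolve exactly as in $M_1$ and $M_2$, and that $A$, $A^1_x$, $A^2_x$, and all the $S^i_{x,a}$ are finite so $D$ has finitely many states — is immediate from the definitions, and the value of $b$ at the end plays no role in acceptance (it is recorded for later use, not tested here). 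So the proof is a short induction plus the observation above, and I would write it in just a few lines.
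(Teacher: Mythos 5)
Your argument is correct, and it is exactly the reasoning the paper has in mind: the paper dismisses this lemma as ``clear from the above definition,'' and your write-up simply makes that explicit, using that $M_1$ accepts $L$, $M_2$ accepts $L^{-1}$, that $L$ is closed under subwords (and formal inversion, so prefixes also lie in $L^{-1}$), and that the $\emptyset$-sink is entered only when a prefix leaves $T_1\times T_2$. Your only soft spot is the phrase ``prefix-of-reverse'' (a prefix of $w$ inverted is a \emph{suffix} of $w^{-1}$), but subword closure covers this, so nothing is lost.
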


Let $d\in
\mathbb{Z}_{2}^{n}\oplus\mathbb{Z}_{d_{1}}\oplus\cdots\oplus\mathbb{Z}_{d_{m}}$.

\begin{definition}
The {\em Parity Predicting Automaton} associated to $d$,
denoted by $D(d)$, is the same as the Parity Predicting
Automaton with the extra requirement that the third component of any
accepting state is $d$.
\end{definition}

Let $L(d)$ denote the regular language defined by $D(d)$.

\begin{lemma}
$\{L(d)\mid  d\in
\mathbb{Z}_{2}^{n}\oplus\mathbb{Z}_{d_{1}}\oplus\cdots\oplus\mathbb{Z}_{d_{m}}\}$
is a finite partition of $L$.
\end{lemma}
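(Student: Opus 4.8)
The plan is to deduce this directly from the preceding lemma (that the Parity Predicting Automaton $D$ accepts $L$) together with the observation that $D$ is deterministic, exactly as Lemma \ref{partition of L} was deduced for the Future Predicting Automaton. First I would note that since $F_{D}$ is a genuine function $S_{D}\times X\to S_{D}$, every word $w\in X^{*}$ has a unique run in $D$, ending at a well-defined state which I will call $s(w)\in S_{D}$; and $w\in L$ if and only if $s(w)$ is an accepting state, i.e.\ $s(w)=(\bar s_{1},\bar s_{2}, b)$ with $\bar s_{1}\in T_{1}$, $\bar s_{2}\in T_{2}$, and some $b\in\mathbb{Z}_{2}^{n}\oplus\mathbb{Z}_{d_{1}}\oplus\cdots\oplus\mathbb{Z}_{d_{m}}$.

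Next I would unwind the definition of $D(d)$: a word $w$ lies in $L(d)$ precisely when $w\in L$ and the third coordinate of $s(w)$ equals $d$. Hence for $w\in L$, writing $s(w)=(\bar s_{1},\bar s_{2}, b)$, we have $w\in L(b)$ and $w\notin L(d)$ for every $d\neq b$. This shows at once that the sets $L(d)$ are pairwise disjoint, that each $L(d)\subseteq L$, and that their union is all of $L$, since every $w\in L$ belongs to $L(b)$ for its uniquely determined third coordinate $b$. Because the index set $\mathbb{Z}_{2}^{n}\oplus\mathbb{Z}_{d_{1}}\oplus\cdots\oplus\mathbb{Z}_{d_{m}}$ is a finite group, there are only finitely many blocks $L(d)$, so $\{L(d)\}$ is a finite partition of $L$ (allowing empty blocks, or equivalently discarding those $d$ never realized as a third coordinate of an accepting run).

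There is essentially no obstacle here: the only points requiring a word of justification are that $D$ has a unique run on each input (immediate from $F_{D}$ being a function and from $F_{D}(\emptyset,x)=\emptyset$, so the dead state is absorbing) and that the accepting states of $D$ are partitioned according to their third coordinate, with $D(d)$ keeping exactly those with third coordinate $d$ — both immediate from the definitions. For completeness I would also remark that each $L(d)$ is regular, since $D(d)$ agrees with $D$ except in its set of accepting states.
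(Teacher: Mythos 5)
Your proof is correct and is exactly the argument the paper has in mind: the paper states this lemma without proof, treating it (like the analogous partition lemma for the Future Predicting Automaton) as immediate from determinism of $D$, the fact that $D$ accepts $L$, and the definition of $D(d)$ as $D$ with accepting states restricted to those with third coordinate $d$. Your spelled-out version, including the remark about empty blocks and the regularity of each $L(d)$, adds nothing beyond what the paper intends and is fine as written.
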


The following lemma is the key property of the Parity Predicting
Automata.

\begin{lemma}\label{key property of PPA}
Let $w\in L(d)$. Then $Pa\big(\sigma_{\rho}(w,w^{-1})\big)=
d$.
\end{lemma}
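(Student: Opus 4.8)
The plan is to argue by induction along the word $w$, tracking how the cocycle value $\sigma_{\rho}(g,g^{-1})$ changes as we append one generator at a time and matching that change, letter by letter, with the update performed by the transition function $F_{D}$ of the Parity Predicting Automaton.

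First I would fix notation. Write $w=x_{1}\cdots x_{n}$ with $x_{i}\in X$, let $w_{i}=x_{1}\cdots x_{i}$ (so $w_{0}$ is the empty word and $w_{n}=w$), and let $g_{i}\in\Gamma$ be the element represented by $w_{i}$, with $g_{0}=1$. Since $w\in L(d)\subseteq L$ and the preceding lemma says $D$ accepts exactly $L$, the run of $D$ on $w$ never reaches the sink state $\emptyset$ (once there it would stay there and could not accept). Hence for each $i$ the state of $D$ after reading $w_{i}$ has the form $(\bar s_{1}^{(i)},\bar s_{2}^{(i)},b^{(i)})$ with $\bar s_{1}^{(i)}\in T_{1}$, $\bar s_{2}^{(i)}\in T_{2}$, and $b^{(0)}=0$, while $b^{(n)}=d$ by the definition of $D(d)$. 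Moreover, reading off the non-sink clause of $F_{D}$, the first coordinate evolves by $F_{1}$ from $I_{1}$ and the second by $F_{2}$ from $I_{2}$, so $\bar s_{1}^{(i)}$ is precisely the state $M_{1}$ reaches on $w_{i}$ and $\bar s_{2}^{(i)}$ the state $M_{2}$ reaches on $w_{i}$; in particular Lemmas \ref{property of M_1} and \ref{property of M_2} apply to them.

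The heart of the argument is the cocycle identity
\begin{equation*}
\sigma_{\rho}(g_{i},g_{i}^{-1})-\sigma_{\rho}(g_{i-1},g_{i-1}^{-1})
=\sigma_{\rho}(x_{i},x_{i}^{-1})-\sigma_{\rho}(g_{i-1},x_{i})-\sigma_{\rho}(x_{i}^{-1},g_{i-1}^{-1}),
\end{equation*}
which comes from $g_{i}=g_{i-1}x_{i}$ by two applications of the cocycle condition together with normalization $\sigma_{\rho}(1,-)=0$; this is exactly the computation already carried out inside the proof of Lemma \ref{regularity of q} (the line expanding $\sigma_{\rho}(gx,(gx)^{-1})$), so I would simply cite it rather than redo it. Then I would substitute: by Lemma \ref{property of M_1}, $\sigma_{\rho}(g_{i-1},x_{i})=\sigma_{\rho}(\bar s_{1}^{(i-1)},x_{i})$; and since $X$ is symmetric, $x_{i}^{-1}\in X$, so by Lemma \ref{property of M_2}, $\sigma_{\rho}(x_{i}^{-1},g_{i-1}^{-1})=\sigma_{\rho}(x_{i}^{-1},\bar s_{2}^{(i-1)})$. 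Applying the homomorphism $Pa$ and comparing with the definition of $F_{D}$, the right-hand side becomes exactly the increment $b^{(i)}-b^{(i-1)}$, giving
\begin{equation*}
Pa\big(\sigma_{\rho}(g_{i},g_{i}^{-1})\big)-Pa\big(\sigma_{\rho}(g_{i-1},g_{i-1}^{-1})\big)=b^{(i)}-b^{(i-1)}.
\end{equation*}

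Finally I would telescope over $i=1,\dots,n$. The left-hand side collapses to $Pa(\sigma_{\rho}(g_{n},g_{n}^{-1}))-Pa(\sigma_{\rho}(g_{0},g_{0}^{-1}))$, and since $g_{0}=1$ and $\sigma_{\rho}$ is normalized the second term vanishes, so (using our convention identifying $w$ with the element $g_{n}$ it represents) this is $Pa(\sigma_{\rho}(w,w^{-1}))$. The right-hand side collapses to $b^{(n)}-b^{(0)}=d$. Hence $Pa(\sigma_{\rho}(w,w^{-1}))=d$, as required. The only genuinely delicate points are (i) verifying that the $M_{1}$- and $M_{2}$-coordinates of $D$ really do behave as stand-alone copies of $M_{1}$ and $M_{2}$, so that Lemmas \ref{property of M_1} and \ref{property of M_2} can be invoked at every step — immediate from the shape of $F_{D}$ once the run is known never to hit $\emptyset$ — and (ii) getting the signs right in the cocycle identity, which is why I would lean on the computation already present in the proof of Lemma \ref{regularity of q}.
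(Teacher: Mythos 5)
Your proposal is correct and is essentially the paper's own argument: the same one-letter cocycle identity $\sigma_{\rho}(g_{i},g_{i}^{-1})=\sigma_{\rho}(g_{i-1},g_{i-1}^{-1})+\sigma_{\rho}(x_{i},x_{i}^{-1})-\sigma_{\rho}(g_{i-1},x_{i})-\sigma_{\rho}(x_{i}^{-1},g_{i-1}^{-1})$, the same observation that the run of $D$ never hits the sink state so Lemmas \ref{property of M_1} and \ref{property of M_2} apply to every prefix, and the same matching of this increment with the update of the third component of $F_{D}$. The paper just packages the telescoping as an induction on the length of $w$ (stripping the last letter) and redoes the cocycle computation instead of citing the one in Lemma \ref{regularity of q}.
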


\begin{proof}
We proceed by induction on the length of $w$. The base case is when
$w$ has length $0$. In this case, $w$ is the identity. Hence
$\sigma_{\rho}(w,w^{-1})=0$. Since $D$'s initial state has $0$ as
its last component, the lemma is true in this case.

Suppose the lemma is true for words of length less then $l$. Let
$w=x_{1}\cdots x_{l}$. By the cocycle condition of
$\sigma_{\rho}$ we have
\begin{eqnarray*}
\sigma_{\rho}(w,w^{-1}) &=& \sigma_{\rho}(x_{1}\cdots x_{l}, x_{l}^{-1}\cdots x_{1}^{-1})
\\&=&  \sigma_{\rho}(x_{1}\cdots x_{l}, x_{l}^{-1})+\sigma_{\rho}(x_{1}\cdots x_{l-1}, x_{l-1}^{-1}\cdots x_{1}^{-1})\\& &-\sigma_{\rho}(x_{l}^{-1},x_{l-1}^{-1}\cdots x_{1}^{-1})
\\&=&\sigma_{\rho}(x_{1}\cdots x_{l-1},1)-\sigma_{\rho}(x_{1}\cdots x_{l-1},x_{l})+\sigma_{\rho}(x_{l},x_{l}^{-1})\\& &+\sigma_{\rho}(x_{1}\cdots x_{l-1}, x_{l-1}^{-1}\cdots x_{1}^{-1})-\sigma_{\rho}(x_{l}^{-1},x_{l-1}^{-1}\cdots x_{1}^{-1})
\\&=&\sigma_{\rho}(x_{1}\cdots x_{l-1}, x_{l-1}^{-1}\cdots x_{1}^{-1})+\sigma_{\rho}(x_{l},x_{l}^{-1})\\& &-\sigma_{\rho}(x_{1}\cdots x_{l-1},x_{l})-\sigma_{\rho}(x_{l}^{-1},x_{l-1}^{-1}\cdots x_{1}^{-1})
\end{eqnarray*}

Suppose $x_{1}\cdots x_{l-1}$ ends at a state $(\bar s_{1}, \bar
s_{2}, b_{l-1})$. Note that $\bar s_{1}\in T_{1}$ and $\bar
s_{2}\in T_{2}$ (otherwise $w$ wouldn't be accepted by $D$).  By the
induction hypothesis, $Pa\big(\sigma_{\rho}(x_{1}\cdots x_{l-1},
x_{l-1}^{-1}\cdots x_{1}^{-1})\big)=b_{l-1}$. Then by the
equation above we have
\begin{eqnarray*}
&&Pa\big(\sigma_{\rho}(w,w^{-1}) \big)
\\&=& Pa\big(\sigma_{\rho}(x_{1}\cdots x_{l-1}, x_{l-1}^{-1}\cdots x_{1}^{-1})\big)+
\\& &Pa\big(\sigma_{\rho}(x_{l},x_{l}^{-1})-\sigma_{\rho}(x_{1}\cdots x_{l-1},x_{l})-\sigma_{\rho}(x_{l}^{-1},x_{l-1}^{-1}\cdots x_{1}^{-1})\big)
\\&=&b_{l-1}+Pa\big(\sigma_{\rho}(x_{l},x_{l}^{-1})-\sigma_{\rho}(x_{1}-\cdots x_{l-1},x_{l})-\sigma_{\rho}(x_{l}^{-1},x_{l-1}^{-1}\cdots x_{1}^{-1})\big).
\end{eqnarray*}

On the other hand, by the definition of $D$, $x_{1}\cdots x_{l}$
ends at the state $\Big(F_{1}\big(\bar s_{1}, x_{l}\big),
F_{2}\big(\bar s_{2}, x_{l}^{-1}\big),  b\Big)$, where
\begin{eqnarray*}
 b= b_{l-1}+Pa\big(\sigma_{\rho}(x_{l},
x_{l}^{-1})-\sigma_{\rho}(\bar s_{1},
x_{l})-\sigma_{\rho}(x_{l}^{-1},\bar s_{2})\big)
\end{eqnarray*}
Therefore it is enough to show that
\begin{equation*}
\sigma_{\rho}(x_{1}\cdots x_{l-1},x_{l})=\sigma_{\rho}(\bar s_{1},
x_{l})
\end{equation*}
and
\begin{equation*}
\sigma_{\rho}(x_{l}^{-1},x_{l-1}^{-1}\cdots
x_{1}^{-1})=\sigma_{\rho}(x_{l}^{-1}, \bar s_{2}).
\end{equation*}

Since $x_{1}\cdots x_{l-1}$ ends at $(\bar s_{1}, \bar s_{2},
b_{l-1})$, we know that $x_{1}\cdots x_{l-1}$ ends at $\bar s_{1}$
when read by $M_{1}$ and it ends at $\bar s_{2}$ when read by
$M_{2}$. Hence by Lemma \ref{property of M_1} and Lemma
\ref{property of M_2}, the above two equations follow and the proof
of the lemma is completed.
\end{proof}

\section{\large Proof of Theorem \ref{Main}}\label{proof}

Let $U$ be a finite set of variables and $C\subset E$ be a finite
set of constants in $E$. Recall from Section 2 that it is enough to
consider triangular equation systems. Let $\mathcal{E}=\{
e_{i,1}e_{i,2}e_{i, 3}=1, i=1,\cdot\cdot\cdot, n\}$ be a triangular
equation system where $e_{i,j}\in U\cup C$.

Now we construct equation systems $\mathcal{V}_{t}$ over $V$ and
$\mathcal{W}_{t}$ over $A$ where $t$ runs over some finite set
$\Theta$. The size of $\Theta$ depends on $\mathcal{E}$ and
$\Gamma$. Then we show that $\mathcal{E}$ has a solution in $E$ if
and only if there is some $t\in\Theta$ such that $\mathcal{V}_{t}$
has a solution in $V$ and $\mathcal{W}_{t}$ has a solution in $A$.

\vspace{4mm}

First we describe the finite set $\Theta$ over which the subscript
$t$ of  $\mathcal{V}_{t}$ and  $\mathcal{W}_{t}$ runs. The index set
$\Theta$ consists of tuples $\big((c_{i,j}), (\bar s_{i,j}),
(b_{i,j}), (d_{i,j})\big)_{1\leq i\leq n, 1\leq j\leq 3}$
satisfying the following 4 conditions:

Recall that $V_{\leq l}$ is the set of elements of $V$ whose
corresponding reduced path in $\mathcal K$ has length at most $l$.
Let $\kappa_{1}$ be as in Proposition \ref{Dahmani-Guirardel}. There
exists $\kappa_{2}$ such that all elements of $V_{\leq \kappa_{1}}$
are represented by some $Y$-words of word length as most
$\kappa_{2}$. Recall that $\phi$ is the monoid homomorphism from $Y^{*}$ to
$X^{*}$ defined in the Section 3.1.

\vspace{2mm} {\bf Condition 1}: For each $1\leq i\leq n$, $1\leq
j\leq 3$,  $c_{i,j}$ is a $Y$-word of word length at most
$\kappa_{2}$ and for each $i$ we have $\pi(c_{i,1}c_{i,2}c_{i,3})=1$
in $\Gamma$.

\vspace{2mm} {\bf Condition 2}: For $1\leq i\leq n$, $1\leq j \leq
3$, $\bar s_{i,j}\in T$ is an accepting state of the Further
Predicting Automaton so that $\bar s_{i,j}$ and $\phi(c_{i,j})$ are
compatible.

\vspace{2mm} Recall that for any $\bar s\in T$, $M_{\bar s}$ is the
finite state automaton which has the same states, transition map and
accepting states as $M$, but has $\bar s$ as the initial state. Let
$\bar s'_{i,j}$ be where $\phi(c_{i,j})$ ends when read by $M_{\bar
s_{i,j}}$. Denote the language of words in $X$ that are compatible
with $\bar s'_{i,j}$ by $L(\bar s_{i,j},c_{i,j})$. Let $A(\bar
s_{i,j}, c_{i,j})=\{\sigma_{q}(\bar s'_{i,j}, w)\mid w\in L(\bar
s_{i,j},c_{i,j}) \}$.

\vspace{2mm} {\bf Condition 3}: For all $1\leq i\leq n$, $1\leq
j\leq 3$, $b_{i,j}\in A(\bar s_{i,j}, c_{i,j})$.

\vspace{2mm} {\bf Condition 4}: For all $1\leq i\leq n$, $1\leq
j\leq 3$,  $d_{i,j}\in
\mathbb{Z}_{2}^{n}\oplus\mathbb{Z}_{d_{1}}\oplus\cdots\oplus\mathbb{Z}_{d_{m}}
$.

\begin{lemma}\label{theta is finite}
$\Theta$ is finite.
\end{lemma}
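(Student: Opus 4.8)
The plan is to realize $\Theta$ as a subset of a finite Cartesian product and then invoke the trivial fact that subsets of finite sets are finite. The equation system $\mathcal{E}$ is a fixed input, so $n$ is fixed and the index set $\{(i,j)\mid 1\le i\le n,\ 1\le j\le 3\}$ is a fixed finite set; hence it suffices to check that each of the four coordinates $c_{i,j}$, $\bar s_{i,j}$, $b_{i,j}$, $d_{i,j}$ of a tuple in $\Theta$ can take only finitely many values. Three of these are immediate. By Condition 1, $c_{i,j}$ is a $Y$-word of length at most $\kappa_2$, and there are only finitely many such words since $Y$ is finite (the constant $\kappa_2$ itself exists because $\mathcal{K}$ is locally finite, so $V_{\le\kappa_1}$ is finite, and $Y$ generates $V$). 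By Condition 2, $\bar s_{i,j}$ lies in the accepting-state set $T$ of the Future Predicting Automaton, which is finite because $X$, every $A_x$, and every $S_{x,a}$ is finite (Definition \ref{FPA} and the remark following it). By Condition 4, $d_{i,j}$ lies in the finite group $\mathbb{Z}_{2}^{n}\oplus\mathbb{Z}_{d_{1}}\oplus\cdots\oplus\mathbb{Z}_{d_{m}}$.

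The remaining coordinate $b_{i,j}$ is the only one whose admissible values depend on the other coordinates, since Condition 3 requires $b_{i,j}\in A(\bar s_{i,j},c_{i,j})$; so I would reduce the problem to showing that $A(\bar s_{i,j},c_{i,j})$ is finite for each fixed pair $(\bar s_{i,j},c_{i,j})$. Tracing through the definitions, every element of $A(\bar s_{i,j},c_{i,j})$ has the form $\sigma_q(\bar s'_{i,j},w)$ for a word $w$ compatible with the state $\bar s'_{i,j}$ obtained by reading $\phi(c_{i,j})$ from $\bar s_{i,j}$, and by the definition of $\sigma_q(\bar s',-)$ (which is legitimate thanks to Proposition \ref{key property of FPA}) this value equals a cocycle value $\sigma_q(g_0,h)$, where $h\in\Gamma$ is the element represented by $w$ and $g_0\in\Gamma$ is a single element determined by $\bar s_{i,j}$ and $c_{i,j}$ — namely the element represented by $u\,\phi(c_{i,j})$ for any $u\in L(\bar s_{i,j})$. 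Hence $A(\bar s_{i,j},c_{i,j})$ is contained in $\sigma_q(g_0,\Gamma)$, which is finite: finiteness of $\sigma_q(g,\Gamma)$ for every $g\in\Gamma$ is part of the assertion that $\sigma_q$ is $L$-regular (Lemma \ref{regularity of q}), and in any case follows from the cocycle identity together with the finiteness of $\sigma_q(x,\Gamma)$ for $x\in X$ established there. Since there are only finitely many pairs $(\bar s_{i,j},c_{i,j})$ and each yields a finite set $A(\bar s_{i,j},c_{i,j})$, the family $(b_{i,j})_{i,j}$ ranges over a finite set as well, and $\Theta$ embeds into a finite product, hence is finite.

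The one genuinely non-bookkeeping step — which I would flag as the main obstacle — is the finiteness of $A(\bar s_{i,j},c_{i,j})$: it requires carefully unwinding the definitions of $M_{\bar s}$, of compatibility, and of the ``predicted'' cocycle $\sigma_q(\bar s',-)$ in order to recognize all of its values as instances of $\sigma_q(g_0,-)$ with a single fixed first argument, after which $L$-regularity of $\sigma_q$ closes the argument. Everything else reduces to the elementary finiteness of $Y$, of the state set of the Future Predicting Automaton, and of the torsion group $\mathbb{Z}_{2}^{n}\oplus\mathbb{Z}_{d_{1}}\oplus\cdots\oplus\mathbb{Z}_{d_{m}}$.
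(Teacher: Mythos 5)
Your proposal is correct and follows essentially the same route as the paper: finitely many bounded-length $Y$-words for the $c_{i,j}$, finiteness of the accepting-state set $T$ for the $\bar s_{i,j}$, finiteness of $\mathbb{Z}_{2}^{n}\oplus\mathbb{Z}_{d_{1}}\oplus\cdots\oplus\mathbb{Z}_{d_{m}}$ for the $d_{i,j}$, and, for the $b_{i,j}$, the identification of $A(\bar s_{i,j},c_{i,j})$ with a set of values $\sigma_{q}(u_{i,j},w)$ for a fixed $u_{i,j}\in L(\bar s'_{i,j})$, which is finite by the $L$-regularity of $\sigma_{q}$ (Lemma \ref{regularity of q}). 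Your $g_{0}$ is exactly the paper's fixed representative $u_{i,j}$, so no further comment is needed.
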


\begin{proof}
Since the lengths of $c_{i,j}$'s are bounded, there are finitely
many tuples $(c_{i,j})$ satisfying Condition 1. We know that $T$ is
finite for it is the set of accepting states of a finite state
automaton (FPA). Hence there are finitely many tuples $(\bar
s_{i,j})$ satisfying Condition 2.

For each choice of $(c_{i,j})$ and $(\bar s_{i,j})$, note that
$A(\bar s_{i,j}, c_{i,j})=\{\sigma_{q}(u_{i,j}, w)\mid w\in L(\bar
s_{i,j},c_{i,j}) \}$ for any $u_{i,j}\in L(\bar s'_{i,j})$. Hence
$A(\bar s_{i,j}, c_{i,j})$ is finite by Lemma \ref{regularity of q}.
Therefore there are finitely many choices for $(b_{i,j})$. At last,
possibilities of $(d_{i,j})$ are bounded since
$\mathbb{Z}_{2}^{n}\oplus\mathbb{Z}_{d_{1}}\oplus\cdots\oplus\mathbb{Z}_{d_{m}}$
is finite.
\end{proof}

For each $t=\big((c_{i,j}), (\bar s_{i,j}), (b_{i,j}), (\bar
d_{i,j})\big)_{1\leq i\leq n, 1\leq j\leq 3}$ we have the following
setups:

Let $L(\bar s_{i,j})\subset X^{*}$ be the regular language
associated to $\bar s_{i,j}$.

Let $L(b_{i,j})=\{w\in L(\bar s_{i,j},c_{i,j})\mid\sigma_{q}(\bar s'_{i,j},
w)=b_{i,j}\}$.

\begin{lemma}
$L(b_{i,j})$ is regular.
\end{lemma}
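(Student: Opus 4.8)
The plan is to produce a finite automaton accepting exactly $L(b_{i,j})$, obtained as a product of two copies of the Future Predicting Automaton $M$ together with an accumulator taking values in the finite set $A(\bar s_{i,j},c_{i,j})$. Along a run on a word $w$, the first copy of $M$ reads $w$ from the initial state $I$, while the second reads $w$ from $\bar s'_{i,j}$; by Definition~\ref{compatible} the second copy lands in an accepting state precisely when $w$ is compatible with $\bar s'_{i,j}$, i.e. when $w\in L(\bar s_{i,j},c_{i,j})$. The accumulator is designed so that after reading a prefix $w'$ of $w$ it holds the value $\sigma_{q}(\bar s'_{i,j},w')$, which is defined, by the discussion following Proposition~\ref{key property of FPA}, once $w'$ is compatible with $\bar s'_{i,j}$. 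Fix once and for all some $u\in L(\bar s'_{i,j})$; recall that $u\in L(\bar s'_{i,j})$ means $M$ reads $u$ from $I$ to $\bar s'_{i,j}$, so running the second copy of $M$ on $w$ is the same as running $M$ on $uw$ from $I$. Since $L$ is closed under taking subwords (as used in the proof of Proposition~\ref{key property of FPA}) and $w$ compatible with $\bar s'_{i,j}$ means $uw\in L$, every prefix of a compatible word is again compatible, so the accumulator is well defined at every step of every run on a compatible word.

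The update rule for the accumulator comes from the cocycle identity. If $w'$ is a compatible prefix, $x\in X$, and $w'x$ is still compatible, then
\begin{equation*}
\sigma_{q}(u,w'x)=\sigma_{q}(u,w')+\sigma_{q}(uw',x)-\sigma_{q}(w',x),
\end{equation*}
where the left side equals $\sigma_{q}(\bar s'_{i,j},w'x)$ and the first term on the right equals the current accumulator value $\sigma_{q}(\bar s'_{i,j},w')$. Now $uw'\in L$ and $uw'x\in L$, so $uw'$ ends, when read by $M$, at an accepting state $\bar r'\in T$ depending only on $\bar s'_{i,j}$ and $w'$ — namely the state currently recorded by the second copy of $M$ — and $x$ is compatible with $\bar r'$; hence $\sigma_{q}(uw',x)=\sigma_{q}(\bar r',x)$, again by the definition following Proposition~\ref{key property of FPA}. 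Similarly $w'\in L$ and $w'x\in L$, so $w'$ ends at an accepting state $\bar r\in T$ (the state recorded by the first copy of $M$) with $x$ compatible with $\bar r$, and $\sigma_{q}(w',x)=\sigma_{q}(\bar r,x)$. Therefore the new accumulator value is
\begin{equation*}
\sigma_{q}(\bar s'_{i,j},w')+\sigma_{q}(\bar r',x)-\sigma_{q}(\bar r,x),
\end{equation*}
a function of the current product state and the letter $x$; for $w'$ empty the value is $\sigma_{q}(\bar s'_{i,j},1)=0$.

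Finally I assemble the automaton: its states are triples $(\bar r,\bar r',a)$ with $\bar r,\bar r'$ states of $M$ and $a\in A(\bar s_{i,j},c_{i,j})$, plus a dead state; a transition goes to the dead state whenever any of the conditions used above fails (i.e. $\bar r\notin T$, $\bar r'\notin T$, $x$ not compatible with $\bar r$ or with $\bar r'$, or the updated value falls outside $A(\bar s_{i,j},c_{i,j})$), and otherwise it updates $\bar r,\bar r'$ by the transition function of $M$ and $a$ by the rule above; the initial state is $(I,\bar s'_{i,j},0)$; and $(\bar r,\bar r',a)$ is accepting exactly when $\bar r'\in T$ and $a=b_{i,j}$. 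By Lemma~\ref{theta is finite} the set $A(\bar s_{i,j},c_{i,j})$ is finite and $b_{i,j}\in A(\bar s_{i,j},c_{i,j})$ (Condition 3), so this is a genuine finite automaton. On a compatible word no transition hits the dead state, the second copy of $M$ ends in $T$, and by the computation above the accumulator ends equal to $\sigma_{q}(\bar s'_{i,j},w)$; on a non-compatible word the second copy of $M$ ends outside $T$ (or the run has already entered the dead state), so the word is rejected. Hence the automaton accepts exactly $L(b_{i,j})$, so $L(b_{i,j})$ is regular. The step needing the most care is the bookkeeping of the membership relations $uw'\in L$, $w'\in L$ and the compatibility of $x$ that legitimize replacing $\sigma_{q}(uw',x)$ and $\sigma_{q}(w',x)$ by functions of the recorded automaton states, together with checking that the accumulator never escapes the finite set $A(\bar s_{i,j},c_{i,j})$, so that the state set stays finite.
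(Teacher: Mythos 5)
Your proof is correct, but it follows a genuinely different route from the paper's. The paper's own argument is a two-line reduction: fix $u_{i,j}\in L(\bar s'_{i,j})$ and write $L(b_{i,j})=\{w\in L\mid \sigma_{q}(u_{i,j},w)=b_{i,j}\}\cap L(\bar s_{i,j},c_{i,j})$, where the second factor is regular by Lemma~\ref{regularity of compatible} and the first is asserted to be regular by Lemma~\ref{regularity of q}, so regularity follows from closure of regular languages under intersection. You instead build an explicit automaton: two synchronized copies of the FPA (started at $I$ and at $\bar s'_{i,j}$) together with an accumulator in the finite set $A(\bar s_{i,j},c_{i,j})$, updated via the cocycle identity, with the well-definedness of each update supplied by Proposition~\ref{key property of FPA}, prefix-closure of $L$ under subwords, and finiteness from Lemma~\ref{theta is finite}; your bookkeeping of the membership and compatibility conditions (that $w'$, $w'x$, $uw'$, $uw'x$ all stay in $L$, so the recorded states lie in $T$ and $\sigma_{q}(\bar r,x)$, $\sigma_{q}(\bar r',x)$ make sense, and that the accumulator never leaves $A(\bar s_{i,j},c_{i,j})$ on compatible words) is exactly the right thing to check and it goes through. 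What your longer construction buys: you only ever consume the generator-level values $\sigma_{q}(\bar s,x)$, $x\in X$, already packaged into the FPA's accepting states, whereas the paper's first factor $\{w\in L\mid\sigma_{q}(u_{i,j},w)=b_{i,j}\}$ constrains the \emph{second} argument of $\sigma_{q}$ with the first argument fixed at an arbitrary element, which is not literally the statement of Lemma~\ref{regularity of q} (nor of the generator-only lemma following it), so the paper's citation tacitly relies on a small extension of those regularity statements that your automaton sidesteps. What the paper's route buys is brevity. Both arguments share the same implicit normalizations (such as $\sigma_{q}(\cdot,1)=0$ and nonemptiness of $L(\bar s'_{i,j})$), so I see no gap in your proposal.
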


\begin{proof}
Pick and fix $u_{i,j}\in L(\bar s'_{i,j})$. Note that $L(b_{i,j})= \{w\in L\mid\sigma_{q}(u_{i,j}, w)=b_{i,j}\}\cap
L(\bar s_{i,j},c_{i,j})$. By Lemma \ref{regularity of q} $\{w\in
L\mid\sigma_{q}(u_{i,j}, w)=b_{i,j}\}$ is regular and $L(\bar
s_{i,j},c_{i,j})$ is regular by Lemma \ref{regularity of
compatible}. Hence $L(b_{i,j})$ is a regular language.
\end{proof}

Let $L(d_{i,j})\subset X^{*}$ be the regular language
associated to $d_{i,j}$.

If $e_{i,j}$
is a constant in $E$, let $L(e_{i,j})$ denote the regular language
of all $L$-representatives of $p(e_{i,j})$; otherwise, let
$L(e_{i,j})=L$.

\begin{convention}
For any regular language $K$, we use $\phi^{-1}(K)$ to denote the
rational subset of $V$ defined by $\phi^{-1}(K)\subset Y^{*}$.
\end{convention}

We now define $\mathcal{V}_{t}$ as follows:

Let $\{v_{i,j} \mid 1\leq i\leq n, 1\leq j\leq 3 \}$ be a set of
variables such that $v_{i,j}$ and $v_{i',j'}$ are the same variable
if and only if $e_{i,j}$ and $e_{i',j'}$ are the same.  Let $P=\{p_{i,j} | 1\leq i\leq n, 1\leq j\leq 3 \}$
be another set of distinct variables.
\begin{equation*}  \mathcal{V}_{t}=
\left\{ \begin{array}{rl} p_{i,1}c_{i,1}(p_{i,2})^{-1}=v_{i,1}
                                \\ p_{i,2}c_{i,2}(p_{i,3})^{-1}=v_{i,2}
                                \\ p_{i,3}c_{i,3}(p_{i,1})^{-1}=v_{i,3}
                                \\p_{i,j}\in \phi^{-1}(L(\bar s_{i,j}))
                                \\p_{i,j+1}^{-1}\in \phi^{-1}(L(b_{i,j}))
                                \\v_{i,j}\in \phi^{-1}(L(d_{i,j}))
                                \\v_{i,j}\in \phi^{-1}(L(e_{i,j}))
                         \end{array} \right.   \hspace{4mm}   1\leq i\leq n; 1\leq j\leq 3; 3+1=1 \end{equation*}

\vspace{4mm} Let $a_{i,j}=\sigma_{q}(\bar s_{i,j}, \phi(c_{i,j}))$.
Recall that $\pi:V\rightarrow \Gamma$ sends each $v\in V$ (which is
a path in $\mathcal{K}$) to its terminal point.

\begin{lemma}\label{properties of rational constraints}
Suppose $\{\tilde v_{i, j}, \tilde p_{i,j}\}$ is a solution of $\mathcal{V}_{t}$. The following are true:
\begin{enumerate}

\item $\sigma_{q}\big(\pi(\tilde p_{i,j}),\pi(c_{i,j})\big)=a_{i,j}$;
\item $\sigma_{q}\big(\pi(\tilde p_{i,j}c_{i,j}), \pi((\tilde p_{i,j+1})^{-1})\big)=b_{i,j}$;
\item $Pa\Big(\sigma_{\rho}\big(\pi(\tilde v_{i,j}),\pi((\tilde v_{i,j})^{-1})\big)\Big)=d_{i,j}$.
\item $\pi(\tilde v_{i,j})=p(e_{i,j})$ if $e_{i,j}$ is a constant.
\end{enumerate}
\end{lemma}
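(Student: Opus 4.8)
The plan is to verify each of the four claimed identities by unwinding the definitions of the rational constraints imposed in $\mathcal{V}_{t}$ and applying the key properties of the Future Predicting Automaton and the Parity Predicting Automaton established in Section \ref{automata}. Throughout, the bridge between an element $v \in V$ and a word over $X$ is the morphism $\phi$: if $\tilde p_{i,j}$ is represented by a $Y$-word $w$, then $\phi(w)$ is an $X$-word representing $\pi(\tilde p_{i,j}) \in \Gamma$, since each $w_y$ was chosen to be a $K_\Gamma$-geodesic for $\pi(y)$. The constraints $p_{i,j} \in \phi^{-1}(L(\bar s_{i,j}))$, $p_{i,j+1}^{-1} \in \phi^{-1}(L(b_{i,j}))$, $v_{i,j} \in \phi^{-1}(L(d_{i,j}))$ and $v_{i,j} \in \phi^{-1}(L(e_{i,j}))$ in $\mathcal{V}_{t}$ translate precisely into the statements that the associated $X$-words lie in $L(\bar s_{i,j})$, $L(b_{i,j})$, $L(d_{i,j})$ and $L(e_{i,j})$ respectively; these are the four hypotheses I will use, one for each conclusion.

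First I would prove (4): the constraint $v_{i,j} \in \phi^{-1}(L(e_{i,j}))$ says that $\phi(w_{v_{i,j}}) \in L(e_{i,j})$, and when $e_{i,j}$ is a constant, $L(e_{i,j})$ is by definition the set of $L$-representatives of $p(e_{i,j})$, so $\pi(\tilde v_{i,j}) = p(e_{i,j})$. Next, for (3): the constraint $v_{i,j} \in \phi^{-1}(L(d_{i,j}))$ means the $X$-word representing $\pi(\tilde v_{i,j})$ lies in $L(d_{i,j})$, the language accepted by the Parity Predicting Automaton $D(d_{i,j})$; Lemma \ref{key property of PPA} then gives $Pa(\sigma_\rho(\pi(\tilde v_{i,j}), \pi(\tilde v_{i,j})^{-1})) = d_{i,j}$ directly. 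For (1): the constraint $p_{i,j} \in \phi^{-1}(L(\bar s_{i,j}))$ says the $X$-word $u$ representing $\pi(\tilde p_{i,j})$ lies in $L(\bar s_{i,j})$, so $u$ ends at state $\bar s_{i,j}$ when read by the FPA; by Condition 2, $\bar s_{i,j}$ and $\phi(c_{i,j})$ are compatible, so $\sigma_q(\bar s_{i,j}, \phi(c_{i,j}))$ is defined and, by Proposition \ref{key property of FPA} and the definition of $\sigma_q(\bar s_{i,j}, -)$, equals $\sigma_q(u, \phi(c_{i,j})) = \sigma_q(\pi(\tilde p_{i,j}), \pi(c_{i,j}))$; this is exactly $a_{i,j} = \sigma_q(\bar s_{i,j},\phi(c_{i,j}))$.

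The one requiring a little more care is (2). Here I would use the equation $p_{i,j} c_{i,j} (p_{i,j+1})^{-1} = v_{i,j}$ together with the constraint $p_{i,j+1}^{-1} \in \phi^{-1}(L(b_{i,j}))$. The subtlety is that $\pi(\tilde p_{i,j} c_{i,j})$ should be tracked by the state $\bar s'_{i,j}$, which was defined as where $\phi(c_{i,j})$ ends when $M_{\bar s_{i,j}}$ is run starting from $\bar s_{i,j}$; since the $X$-word for $\pi(\tilde p_{i,j})$ ends at $\bar s_{i,j}$, the concatenated word representing $\pi(\tilde p_{i,j} c_{i,j})$ ends at $\bar s'_{i,j}$, i.e.\ lies in $L(\bar s'_{i,j})$. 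The constraint then says the $X$-word representing $\pi((\tilde p_{i,j+1})^{-1})$ lies in $L(b_{i,j}) = \{w \in L(\bar s_{i,j}, c_{i,j}) \mid \sigma_q(\bar s'_{i,j}, w) = b_{i,j}\}$, and applying Proposition \ref{key property of FPA} (with base state $\bar s'_{i,j}$ and a word in $L(\bar s'_{i,j})$ representing $\pi(\tilde p_{i,j} c_{i,j})$, compatible with it) yields $\sigma_q(\pi(\tilde p_{i,j} c_{i,j}), \pi((\tilde p_{i,j+1})^{-1})) = \sigma_q(\bar s'_{i,j}, w) = b_{i,j}$. The main obstacle is bookkeeping: making sure the word for $\pi(\tilde p_{i,j} c_{i,j})$ is genuinely the concatenation of the chosen $X$-representatives and that compatibility of the relevant state with this word follows from membership in $L(b_{i,j}) \subseteq L(\bar s_{i,j}, c_{i,j})$, so that the FPA hypothesis applies; once the identifications via $\phi$ are set up carefully, each part is a short deduction.
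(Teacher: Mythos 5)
Your proof is correct and follows essentially the same route as the paper: translate each rational constraint into a statement about an $X$-word via $\phi$, then apply Proposition \ref{key property of FPA} for (1) and (2) (tracking the state $\bar s'_{i,j}$ exactly as in the paper's use of $u_{i,j}\in L(\bar s'_{i,j})$), Lemma \ref{key property of PPA} for (3), and the definition of $L(e_{i,j})$ for (4). No substantive differences from the paper's argument.
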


\begin{proof}
First note for $v_{i}\in V$ and $w_{i}\in Y^{*}$ represents $v_{i}$ for $i=1,2$. we have
\begin{equation*}
 \sigma_{q}\big(\pi(v_{1}),\pi(v_{2})\big)=\sigma_{q}\big(\phi(w_{1}),\phi(w_{2})\big)
\end{equation*}
by the following commutative diagram
\[
\begin{CD}
Y^{*} @> \phi >> X^{*} \\
@V VV @V VV \\
V @> \pi >> \Gamma
\end{CD}
\]
Hence we can prove any statement about $\sigma_{q}\big(\pi(v_{1}),\pi(v_{2})\big)$ by proving the same statement about $\sigma_{q}\big(\phi(w_{1}),\phi(w_{2})\big)$

Since $\tilde p_{i,j}\in \phi^{-1}(L(\bar s_{i,j}))$,
there exists $p'_{i,j}\in Y^{*}$ representing $\tilde  p_{i,j}$ such that $\phi(p'_{i,j})\in L(\bar s_{i,j})$. Hence by Lemma \ref{key property of FPA} and the
definition of $a_{i,j}$, we have
$\sigma_{q}\big(\phi(p'_{i,j}),\phi(c_{i,j})\big)=a_{i,j}$, which proves
(1).

For (2), since $(\tilde p_{i,j+1})^{-1}\in \phi^{-1}(L(b_{i,j}))$, there exists $p'_{i,j+1}\in Y^{*}$ representing $\tilde p_{i,j+1}$ such that $\phi\big((p'_{i,j+1})^{-1}\big)\in L(b_{i,j})$. By the definition of $L(b_{i,j})$ we know that $\sigma_{q}\big(u_{i,j},
\phi((p'_{i,j+1})^{-1})\big)=b_{i,j}$ and that $\phi\big((p'_{i,j+1})^{-1}\big)$
is compatible with $\bar s'_{i,j}$. We know that
$\phi(p'_{i,j}c_{i,j})\in L(\bar s'_{i,j})$ by the definition of
$\bar s'_{i,j}$. We have $u_{i,j}\in L(\bar s'_{i,j})$. Hence by Lemma \ref{key property of FPA}
we have
\begin{equation*}
\sigma_{q}\big(\phi(p'_{i,j}c_{i,j}), \phi((p'_{i,j+1})^{-1})\big)=\sigma_{q}\big(u_{i,j},
\phi((p'_{i,j+1})^{-1})\big)=b_{i,j}
\end{equation*}

For (3), since $\tilde v_{i,j}\in\phi^{-1}(L(d_{i,j}))$, there exists
$v'_{i,j}\in Y^{*}$ representing $\tilde v_{i,j}$ such that
$\phi(v'_{i,j})\in L(d_{i,j})$. Hence by Lemma \ref{key property of PPA} we have
$Pa\Big(\sigma_{\rho}\big(\phi(v'_{i,j}),
(\phi(v'_{i,j}))^{-1}\big)\Big)=d_{i,j}$.

(4) directly follows from the definition of $L(e_{i,j})$.
\end{proof}

We now define the equation system $\mathcal{W}_{t}$ in $A$ correspond to $\mathcal{V}_{t}$.

Let $\{w_{i,j}\mid 1\leq i\leq n, 1\leq j\leq 3 \}$ be a set of constants and variables satisfying the follows:
\begin{enumerate}
\item $w_{i,j}=w_{i',j'}$ if and only if $e_{i,j}=e_{i',j'}$.
\item If $e_{i,j}$ is a variable in $\mathcal{E}$, then $w_{i,j}$ is a variable in $\mathcal{W}_{t}$.
\item When $e_{i,j}$ is a constant in $E$, we define
\begin{equation*}
w_{i,j}=\iota_{1}^{-1}\big(\iota_{2}(e_{i,j})\cdot (qp(e_{i,j}))^{-1}\cdot\iota_{4}(d_{i,j})\big)
\end{equation*}
\end{enumerate}

Note that in the last case $w_{i,j}$ may not be well defined since $\iota_{1}$ is not surjective. If this happens, we define
$\mathcal{W}_{t}$ to have no solution.

The equation system
$\mathcal{W}_{t}$ over $A$ is defined as follow:
\begin{equation*}
\mathcal{W}_{t}= \left\{ \begin{array}{rl}
\sum_{j=1}^{3}w_{i,j}=\iota_{1}^{-1}\Big(\sum_{j=1}^{3}(a_{i,j}+b_{i,j}+\iota_{4}(d_{i,j}))-\sigma_{q}\big(\pi(c_{i,1}),
\pi(c_{i, 2})\big)\Big);
\end{array} \right.  1\leq i\leq n
\end{equation*}
Note that the right hand sides of the equations above might not be
well defined since $\iota_{1}$ is not surjective. In that case, we
define $\mathcal{W}_{t}$ to have no solution.

\begin{theorem}\label{E V W}
$\mathcal E$ has a solution in $E$ if and only if
$\mathcal{E}_{t}=\mathcal{V}_{t}\cup\mathcal{W}_{t}$ constructed
above has a solution for some $t\in\Theta$.
\end{theorem}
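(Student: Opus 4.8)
The plan is to prove the two implications separately; in each direction one pushes the problem down to $\Gamma$ via the Dahmani--Guirardel reduction and then settles the central $A'$-coordinate by an explicit cocycle computation in $E'$.

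For the direction ``$\mathcal E$ solvable $\Rightarrow$ some $\mathcal E_t$ solvable'', I would start from a solution $\{\bar e_{i,j}\}$ of $\mathcal E$, project it to the solution $\{g_{i,j} = p(\bar e_{i,j})\}$ of $\mathcal F = p(\mathcal E)$, and feed the latter into Proposition \ref{Dahmani-Guirardel} to obtain canonical representatives and a decomposition $\tilde g_{\gamma_{i,j}} = l_{i,j}c_{i,j}(l_{i,j+1})^{-1}$ in $V$ with $l_{i,j} \in \mathcal{QG}(V)$, $c_{i,j} \in V_{\leq \kappa_1}$ and $\pi(c_{i,1}c_{i,2}c_{i,3}) = 1$. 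Applying the construction in the proof of Lemma \ref{quasi-geodesic constants} to the reduced path $\tilde g_{\gamma_{i,j}}$ (which is the concatenation of $l_{i,j}$, a translate of $c_{i,j}$, and a translate of $(l_{i,j+1})^{-1}$) produces a $Y$-word $A_{i,j}B_{i,j}C_{i,j}$ representing $\tilde g_{\gamma_{i,j}}$, with $A_{i,j}$, $B_{i,j}$, $C_{i,j}$ representing $l_{i,j}$, $c_{i,j}$, $(l_{i,j+1})^{-1}$, and with $\phi(A_{i,j}B_{i,j}C_{i,j})$ a $(\lambda,\nu)$-quasigeodesic, hence in $L$; since every prefix of a $(\lambda,\nu)$-quasigeodesic is again one, $\phi(A_{i,j})$ and $\phi(A_{i,j}B_{i,j})$ also lie in $L$. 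I would then use these words to \emph{define} the index $t = ((c_{i,j}),(\bar s_{i,j}),(b_{i,j}),(d_{i,j}))$: take the Condition~1 word $c_{i,j}$ to be $B_{i,j}$; let $\bar s_{i,j} \in T$ be the state of the Future Predicting Automaton $M$ at which $\phi(A_{i,j})$ arrives; let $b_{i,j} := \sigma_q(\bar s'_{i,j},\phi(C_{i,j}))$, where $\bar s'_{i,j}$ is the state at which $\phi(A_{i,j}B_{i,j})$ arrives; and let $d_{i,j}$ be the last coordinate of the state of the Parity Predicting Automaton $D$ at which $\phi(A_{i,j}B_{i,j}C_{i,j})$ arrives. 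Conditions~1--4 then hold essentially by construction, and $\{\tilde p_{i,j} := l_{i,j},\ \tilde v_{i,j} := \tilde g_{\gamma_{i,j}}\}$, witnessed by the $Y$-words $A_{i,j}$, $C_{i,j}$, $A_{i,j}B_{i,j}C_{i,j}$, satisfies every rational constraint of $\mathcal V_t$, hence solves $\mathcal V_t$. Finally I would let $\bar w_{i,j}$ be the element of $A$ extracted from $\iota_2(\bar e_{i,j})$, $q(g_{i,j})$ and $\iota_4(d_{i,j})$ via Lemma \ref{fixing the one half} --- this is legitimate because $d_{i,j}$ records the parity of $\sigma_\rho(g_{i,j},g_{i,j}^{-1})$ by Lemma \ref{key property of PPA} --- check that it agrees with the prescribed constant value of $w_{i,j}$ when $e_{i,j}$ is a constant, and verify by expanding $1 = \iota_2(\bar e_{i,1})\iota_2(\bar e_{i,2})\iota_2(\bar e_{i,3})$ in $E'$ that $\{\bar w_{i,j}\}$ solves $\mathcal W_t$.

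For the converse I would reverse this. Given $t \in \Theta$, a solution $\{\tilde v_{i,j},\tilde p_{i,j}\}$ of $\mathcal V_t$ in $V$, and a solution $\{\bar w_{i,j}\}$ of $\mathcal W_t$ in $A$, set $g_{i,j} := \pi(\tilde v_{i,j})$, $h_{i,j} := \pi(\tilde p_{i,j})$, $k_{i,j} := \pi(c_{i,j})$. The equations of $\mathcal V_t$ give $g_{i,j} = h_{i,j}k_{i,j}h_{i,j+1}^{-1}$ and, since $k_{i,1}k_{i,2}k_{i,3} = \pi(c_{i,1}c_{i,2}c_{i,3}) = 1$ by Condition~1, $g_{i,1}g_{i,2}g_{i,3} = 1$; part~(4) of Lemma \ref{properties of rational constraints} forces $g_{i,j} = p(e_{i,j})$ whenever $e_{i,j}$ is a constant. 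I would then define $\bar e_{i,j} \in E$ by prescribing $\iota_2(\bar e_{i,j}) := q(g_{i,j})\,\iota_1(\bar w_{i,j})\,\iota_4(d_{i,j})^{\pm 1}$ in $E'$; by part~(3) of Lemma \ref{properties of rational constraints} together with Lemma \ref{fixing the one half} this element lies in $\iota_2(E)$, and it equals $e_{i,j}$ when $e_{i,j}$ is a constant by the way $w_{i,j}$ was defined in that case. The remaining point is that $\bar e_{i,1}\bar e_{i,2}\bar e_{i,3} = 1$, equivalently that the $A'$-component of $\iota_2(\bar e_{i,1})\iota_2(\bar e_{i,2})\iota_2(\bar e_{i,3})$ is trivial; using that $A'$ is central in $E'$, this component is $\sigma_q(g_{i,1},g_{i,2}) + \sigma_q(g_{i,1}g_{i,2},g_{i,3}) + \iota_1(\bar w_{i,1}+\bar w_{i,2}+\bar w_{i,3}) + \iota_4(d_{i,1})+\iota_4(d_{i,2})+\iota_4(d_{i,3})$, and refining each $g_{i,j} = h_{i,j}k_{i,j}h_{i,j+1}^{-1}$, applying the cocycle identity to recognize the correction terms as the $a_{i,j}$ and $b_{i,j}$ of Lemma \ref{properties of rational constraints}(1),(2), and using $\sigma_q(g,g^{-1}) = 0$ (symmetry of $q$) to cancel the $\sigma_q(h_{i,j}^{-1},h_{i,j})$-type terms, it collapses to $\iota_1$ of the $i$-th equation of $\mathcal W_t$, which holds by hypothesis. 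Hence $\{\bar e_{i,j}\}$ solves $\mathcal E$.

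I expect the main obstacle to be twofold. In the forward direction the delicate step is arranging $Y$-word representatives of $\tilde p_{i,j}$, $c_{i,j}$ and $\tilde v_{i,j}$ whose $\phi$-images lie in $L$ \emph{and} drive $M$, $M_{\bar s_{i,j}}$ and $D$ into the states that Conditions~2--4 demand; this is handled by applying Lemma \ref{quasi-geodesic constants} to the single path $\tilde g_{\gamma_{i,j}}$, exploiting that prefixes of quasigeodesics are quasigeodesics, and using the freedom to choose $t \in \Theta$ (so the Condition~1 word and the constants $b_{i,j}$, $d_{i,j}$ are simply read off). The second, purely computational, obstacle is the $A'$-coordinate bookkeeping: one must check that evaluating $\sigma_q$ along the relation $g_{i,1}g_{i,2}g_{i,3} = 1$ produces exactly the combination of $a_{i,j}$, $b_{i,j}$, $\sigma_q(\pi(c_{i,1}),\pi(c_{i,2}))$ and $\iota_4(d_{i,j})$ appearing in $\mathcal W_t$ --- this is where the careful choice of the symmetric section $q$ is used. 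Finally, the reason the whole reduction is finite rather than the naive loop that never terminates is that, by Proposition \ref{key property of FPA} and Lemma \ref{key property of PPA}, these $A'$-values depend only on $t$ and not on the chosen solution of $\mathcal V_t$, so $\mathcal W_t$ can be written down from $t$ alone; combined with Lemma \ref{theta is finite} this yields the algorithm.
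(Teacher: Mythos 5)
Your proposal is correct and follows essentially the same route as the paper: both directions use the Dahmani--Guirardel reduction to produce the tripod data, Lemma \ref{quasi-geodesic constants} to get $Y$-words whose $\phi$-images lie in $L$ and drive the automata into the states defining $t$, and then the symmetric-section cocycle computation (with Lemma \ref{properties of rational constraints}, Proposition \ref{key property of FPA}, Lemma \ref{key property of PPA} and Lemma \ref{fixing the one half}) to match the $A'$-coordinate with the equations of $\mathcal{W}_{t}$. The only differences are cosmetic (e.g.\ fixing the sign of the $\iota_{4}(d_{i,j})$ correction, which the paper takes to be $-\iota_{4}(d_{i,j})$).
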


\begin{proof}
Suppose $\mathcal{E}_{t}$ has a solution for $t=\big((c_{i,j}),
(\bar s_{i,j}), (b_{i,j}), (d_{i,j})\big)$, i.e. $\mathcal{V}_{t}$ has
a solution in $V$ and $\mathcal{W}_{t}$ has a solution in $A$. Let
$\{\tilde v_{i, j}, \tilde p_{i,j}\}$ be a solution of $\mathcal{V}_{t}$
and $\{\tilde w_{i,j}\}$ be a solution of $\mathcal{W}_{t}$.

Recall that $i$ is the inclusion from $A'$ to $E'$, $\iota_{1}$ is embedding of $A$ into $A'$ and $\iota_{2}$ is
the embedding of $E$ into $E'$.
We will
show that
\begin{equation*}
\tilde e_{i,j}=q\big(\pi(\tilde v_{i, j})\big)i\big(\iota_{1}(\tilde
w_{i,j})-\iota_{4}(d_{i,j})\big)
\end{equation*}
is a solution of $\mathcal{E}$ in $E'$. Here we think of $\mathcal{E}$ as an equation system in $E'$
by replacing all constants by their image under $\iota_{2}$.

To simplify notion let ${\bf v}_{i,j}=\pi(\tilde v_{i,j})$, ${\bf
p}_{i,j}=\pi(\tilde p_{i,j})$ and ${\bf c}_{i,j}=\pi(c_{i,j})$.

First note that $\tilde e_{i,j}$ has $q$-coordinates $\big({\bf
v}_{i,j}, \iota_{1}(\tilde w_{i,j})-\iota_{4}(d_{i,j})\big)$. By
direct computation, we have:

\begin{eqnarray*}
& &\tilde e_{i,1}\tilde e_{i,2}\tilde e_{i,3}
\\&=& \big({\bf v}_{i,1}, \iota_{1}(\tilde w_{i,1})-\iota_{4}(d_{i,1})\big)
          \big({\bf v}_{i,2}, \iota_{1}(\tilde w_{i,2})-\iota_{4}(d_{i,2})\big)
          \big({\bf v}_{i,3}, \iota_{1}(\tilde w_{i,3})-\iota_{4}(d_{i,3})\big)
\\&=& \big({\bf v}_{i, 1}{\bf v}_{i, 2}{\bf v}_{i, 3}, \hspace{1mm} \sigma_{q}({\bf v}_{i, 1}, {\bf v}_{i, 2})+\sigma_{q}({\bf v}_{i, 1}{\bf v}_{i, 2},{\bf v}_{i, 3})+ \sum_{j=1}^{3}(\iota_{1}(\tilde w_{i,j})-\iota_{4}(d_{i,j}))\big )\hspace{8mm}(1)
\end{eqnarray*}

By the definition of $\mathcal{V}_{t}$  and the fact that $\{\tilde v_{i,j}\}$ is a solution of $\mathcal{V}_{t}$, we have ${\bf v}_{i, 1}{\bf v}_{i, 2}{\bf v}_{i, 3}=1$ in $\Gamma$ .

Now we consider the second
component of $(1)$. The following claim reduces the first two terms into something we have control over.

\begin{Claim}
\begin{eqnarray*}
&&\sigma_{q}({\bf v}_{i, 1}, {\bf v}_{i, 2})+\sigma_{q}({\bf v}_{i, 1}{\bf v}_{i, 2},{\bf v}_{i, 3})
\\&=&\sigma_{q}({\bf c}_{i,1}, {\bf c}_{i,2})-\sum_{j=1}^{3}\sigma_{q}({\bf p}_{i,j}, {\bf c}_{i,j})-\sum_{j=1}^{3}\sigma_{q}({\bf p}_{i,j}{\bf c}_{i,j}, {\bf p}_{i,j+1}^{-1})
\end{eqnarray*}
\end{Claim}

\begin{proof}
Since $\{\tilde v_{i, j}, \tilde p_{i,j}\}$ is a solution of $\mathcal{V}_{t}$.  We have
\begin{equation*}
\left\{ \begin{array}{rl} \tilde p_{i,1}c_{i,1}(\tilde p_{i,2})^{-1}=\tilde v_{i,1}
                                \\ \tilde p_{i,2}c_{i,2}(\tilde p_{i,3})^{-1}=\tilde v_{i,2}
                                \\ \tilde p_{i,3}c_{i,3}(\tilde p_{i,1})^{-1}=\tilde v_{i,3}

                         \end{array} \right.   \hspace{4mm}   1\leq i\leq n\end{equation*}
Project these equations to $\Gamma$ by $\pi$, we have
\begin{equation*}
\left\{ \begin{array}{rl} {\bf p}_{i,1}{\bf c}_{i,1}({\bf p}_{i,2})^{-1}={\bf v}_{i,1}
                                \\  {\bf p}_{i,2}{\bf c}_{i,2}({\bf p}_{i,3})^{-1}={\bf v}_{i,2}
                                \\  {\bf p}_{i,3}{\bf c}_{i,3}({\bf p}_{i,1})^{-1}={\bf v}_{i,3}
                         \end{array} \right.   \hspace{4mm}   1\leq i\leq n\end{equation*}
A direct computation using the cocycle condition of $\sigma_{q}$ and
the fact that $q$ is symmetric gives the identity in the claim.
\end{proof}

By (1) and (2) of Lemma \ref{properties of rational constraints}, we have
\begin{equation*}
\sum_{j=1}^{3}\sigma_{q}({\bf p}_{i,j}, {\bf c}_{i,j})=\sum_{j=1}^{3}a_{i,j}
\end{equation*}
and
\begin{equation*}
\sum_{j=1}^{3}\sigma_{q}({\bf p}_{i,j}{\bf c}_{i,j}, {\bf p}_{i,j+1}^{-1})=\sum_{j=1}^{3}b_{i,j}.
\end{equation*}

Now Claim 2 and the fact that $\{\tilde w_{i,j}\}$ is a
solution of $\mathcal W_{t}$ tell us that the second component of $(1)$ equals
\begin{eqnarray*}
& & \sigma_{q}\big(\pi(c_{i,1}), \pi(c_{i,
2})\big)-\sum_{j=1}^{3}(a_{i,j}+b_{i,j})+\sum_{j=1}^{3}\iota_{1}(\tilde
w_{i,j})-\sum_{j=1}^{3}\iota_{4}(d_{i,j})
\\&=& \sigma_{q}\big(\pi(c_{i,1}), \pi(c_{i, 2})\big)-\sum_{j=1}^{3}(a_{i,j}+b_{i,j}+\iota_{4}(d_{i,j}))+\sum_{j=1}^{3}\iota_{1}(\tilde w_{i,j})=0 \hspace{4mm} (*)
\end{eqnarray*}

At this point, we have shown that $\tilde e_{i,1}\tilde e_{i,2}\tilde
e_{i,3}=1$ in $E'$.

\begin{Claim}
$\tilde e_{i,j}=q\big(\pi(\tilde v_{i, j})\big)i\big(\iota_{1}(\tilde
w_{i,j})-\iota_{4}(d_{i,j})\big)$ is in $\iota_{2}(E)$.
\end{Claim}

\begin{proof}
We use the $\rho'$-coordinate for $E'$. Note that an element $(g, a)_{\rho'}\in E'$ is in $\iota_{2}(E)$ if and
only if $a\in\iota_{1}(A)$ . By the definition of the symmetric section $q$, we have
\begin{eqnarray*}
\tilde e_{i,j}=\Big( \pi(\tilde v_{i,j}), -\iota_{3}\big(\sigma_{\rho}\big(\pi(\tilde v_{i, j}), \pi(\tilde
v^{-1}_{i, j})\big)\big)+\iota_{1}(\tilde
w_{i,j})-\iota_{4}(d_{i,j})\Big)_{\rho'}.
\end{eqnarray*}
Hence it is enough to show that $-\iota_{3}\big(\sigma_{\rho}\big(\pi(\tilde v_{i, j}), \pi(\tilde
v^{-1}_{i, j})\big)\big)-\iota_{4}(d_{i,j})\in\iota_{1}(A)$.
But this follows Lemma \ref{fixing the one half} since we know that
\begin{equation*}
Pa\Big(\sigma_{\rho}\big(\pi(\tilde v_{i,j}), (\pi(\tilde v^{-1}_{i, j})\big)\Big)=d_{i,j}.
\end{equation*}
by (3) of Lemma \ref{properties of rational constraints}. The proof
of the claim is complete.
\end{proof}

Therefore we know that $\mathcal{E}$ has a solution in $\iota_{2}(E)$.
Note that $\iota_{2}(E)$ is isomorphic to $E$. So $\mathcal{E}$ has
a solution in $E$. We have completed the proof of the ``if'' part of
the theorem at this point.

\vspace{4mm}

Now suppose $\mathcal{E}$ has a solution in $E$. Then $\mathcal{E}$ (with constants replaced by their images under $\iota_{2}$) has a solution in $\iota_{2}(E)\subset E'$. Let $\{\tilde e_{i,j}\}$ be such a
solution. We will show that one of the $\mathcal{E}_{t}$ we
constructed also has a solution.

First note that $\{p(\tilde e_{i,j})\}$ is a solution of $\mathcal{E}$
(with the constants replaced by their $p$ images) in $\Gamma$ . Then
by Proposition \ref{Dahmani-Guirardel} for  some $(c_{i,j})$
satisfying Condition 1, the tripod equation system
\begin{equation*}  \mathcal{V}_{t}^{1}=
\left\{ \begin{array}{rl} p_{i,1}c_{i,1}(p_{i,2})^{-1}=v_{i,1}
                                \\ p_{i,2}c_{i,2}(p_{i,3})^{-1}=v_{i,2}
                                \\ p_{i,3}c_{i,3}(p_{i,1})^{-1}=v_{i,3}
                               \end{array} \right.   \hspace{4mm}   1\leq i\leq n\end{equation*}
has a solution $\{\tilde p_{i,j},\tilde v_{i,j}\}$ in $V$ such that $\pi(\tilde v_{i,j})=p(\tilde
e_{i,j})$.

By Proposition \ref{Dahmani-Guirardel} we know that $\tilde p_{i,j},\tilde v_{i,j}$ are
$(\lambda_{1}, \nu_{1})$-quasi geodesics in $\mathcal{K}$. Hence by
Lemma \ref{quasi-geodesic constants} there exists $p'_{i,j},v'_{i,j}\in Y^{*}$ representing
$\tilde p_{i,j},\tilde v_{i,j}$ such that $\phi(p'_{i,j})$,
$\phi(v'_{i,j})$ are $(\lambda, \nu)$-quasi geodesics in
$K_{\Gamma}$ and $p'_{i,j}$ is a subword of $v'_{i,j}$. Recall in the definition of the tripod equation system, $v_{i,j}$ and $v_{i', j'}$ are defined to be the same variable if $e_{i, j}$ and $e_{i', j'}$ are the same variable. However we don't require $v'_{i,j}$ and $v'_{i',j'}$ to be the same $Y$-word even if $v_{i,j}$ and $v_{i', j'}$ are the same variable (hence $\tilde v_{i,j}=\tilde v_{i',j'}$).

Use the Future Predicting Automaton $M$ to read $\phi(p'_{i,j})$. Suppose it ends at the state
$\bar s_{i,j}$.  Note that
$\bar s_{i,j}$ and $\phi(c_{i,j})$ are compatible since $\phi(v'_{i,j})$ is in
$L$ and $\phi(p'_{i,j})\phi(c_{i,j})$ is a subword of
$\phi(v'_{i,j})$. Hence $(\bar s_{i,j})$ satisfy Condition 2. With the above choice of $c_{i,j}$ and $\bar s_{i,j}$, let $\bar s'_{i,j}$ be the state where $\phi(c_{i,j})$ ends when read by $M_{\bar s_{i,j}}$.

Note that $\phi(p'_{i,j}c_{i,j})\in L(\bar s'_{i,j})$. Also $\phi(p'^{-1}_{i,j+1})\in L(\bar s_{i,j}, c_{i,j})$ because
$\phi(p'_{i,j}c_{i,j}p'^{-1}_{i,j+1})$ is in $L$.
Let $b_{i,j}=\sigma_{q}\big(\pi(\tilde
p_{i,j}c_{i,j}), \pi(\tilde p^{-1}_{i,j+1})\big)$.
Then we have
$b_{i,j}\in A(\bar s_{i,j}, c_{i,j})=\{\sigma_{q}(\bar s'_{i,j},w)\mid w\in L(\bar s_{i,j}, c_{i,j}) \}$.
Therefore $(b_{i,j})$ satisfy Condition 3.

Use the Parity Predicting Automaton $D$ to read
$\phi(v'_{i,j})$. Let $d_{i,j}\in \mathbb{Z}_{2}^{n}\oplus\mathbb{Z}_{d_{1}}\oplus\cdots\oplus\mathbb{Z}_{d_{m}}$ be the last
component of the state where it ends.

The above choice of $\big((c_{i,j}), (\bar s_{i,j}), (b_{i,j}), (d_{i,j})\big)$
satisfies all conditions defining $\Theta$. Let $t=\big((c_{i,j}),
(\bar s_{i,j}), (b_{i,j}), (d_{i,j})\big)$.

Let $\mathcal{E}_{t}$ be the system of equations defined by the above $t$.
Then it is clear from the construction of $\mathcal{E}_{t}$ that $\{\tilde p_{i,j},\tilde v_{i,j}\}$ is a solution of it.

\vspace{4mm}
Let $\tilde e^{2}_{i,j}\in A'=\mathbb{Z}^{n}\oplus\mathbb{Z}_{2d_{1}}\oplus\cdots\oplus\mathbb{Z}_{2d_{m}} $ be the second component of the $\rho'$-coordinates of $\tilde e_{i,j}$. Since $\tilde e_{i,j}$ lies in $\iota_{2}(E)$, we know that $\tilde e^{2}_{i,j}\in \iota_{1}(A)$. By the above way of choosing $d_{i,j}$ and (3) of Lemma \ref{properties of rational constraints} we have
\begin{equation*}
d_{i,j}=Pa\Big(\sigma_{\rho}\big(\pi(\tilde v_{i,j}),\pi(\tilde
v^{-1}_{i,j})\big)\Big).
\end{equation*}
Therefore by Lemma \ref{fixing the one half} we know that
$\tilde e^{2}_{i,j}+\iota_{3}\Big(\sigma_{\rho}\big(\pi(\tilde
v_{i,j}),\pi(\tilde v^{-1}_{i,j})\big)\Big)+\iota_{4}(d_{i,j})\in A'$ lies in $\iota_{1}(A)$. Let
$\tilde w_{i,j}\in A$ be the unique element such that
\begin{equation*}
\iota_{1}(\tilde w_{i,j})=\bar e^{2}_{i,j}+\iota_{3}\Big(\sigma_{\rho}\big(\pi(\tilde
v_{i,j}),\pi(\tilde v^{-1}_{i,j})\big)\Big)+\iota_{4}(d_{i,j}).
\end{equation*}

\begin{Claim}
$\{\tilde  w_{i,j}\}$ is a solution of $\mathcal{W}_{t}$.
\end{Claim}

\begin{proof}
With all the notation above, we have
\begin{equation*}
\tilde e_{i,j}=q\big(\pi(\tilde v_{i,j})\big)i\big(\iota_{1}(\tilde w_{i,j})-d_{i,j}\big)
\end{equation*}
just as in the proof of the
``if'' part. But this time, we know that $\{\tilde e_{i,j}\}$ is a
solution of $E'$ instead of $\{\tilde w_{i,j}\}$ being a solution of
$\mathcal{W}_{t}$ and everything else is the same. So we can go
through the same calculation and when we reach $(*)$, we use the
fact that $\tilde e_{i,1}\tilde e_{i,2}\tilde e_{i,3}=1$ to conclude that
$(*)$ holds. Therefore $\{\tilde w_{i,j}\}$ is a solution of
$\mathcal{W}_{t}$ over $A$.
\end{proof}

The proof of the Theorem \ref{E V W} is complete.
\end{proof}

Theorem \ref{Main} follows from the Theorem \ref{E V W} since
equation systems with rational constraints in virtually free groups
are solvable by the work of Dahmani-Guirardel \cite{DGu} and
equation systems in finitely generated abelian groups can be solved
by using linear algebra.

\bibliography{temp}

\end{document}